\newcommand{\BN}{{\mathbb{N}}}
\newcommand{\BR}{{\mathbb{R}}}
\newcommand{\BP}{{\mathbb{P}}}
\newcommand{\BE}{{\mathbb{E}}}
\newcommand{\gD}{\Delta}
\newcommand{\gd}{\delta}
\newcommand{\gb}{\beta}
\newcommand{\Ga}{\Gamma} %%
\newcommand{\gs}{\sigma} %%
\newcommand{\gO}{\Omega}
\newcommand{\go}{\omega}
\newcommand{\gl}{\lambda}
\newcommand{\gth}{\theta}
\newcommand{\gTh}{\Theta}
\newcommand{\gx}{\xi} %% nouvelle commande
\newcommand{\cD}{{\mathcal{D}}}
\newcommand{\cF}{{\mathcal{F}}}
\newcommand{\dd}{{\partial}}%%
\renewcommand{\hat}{\widehat}
\def\vp{\varphi}
\def\e{\varepsilon}
\newcommand{\ti}[1]{\tilde{#1}}
\newcommand{\ol}[1]{\overline{#1}}
\newcommand{\lips}{\text{Lip}}%%%%
\newcommand{\tprob}{\text{Prob}}%%%%
\newcommand{\half}{\frac{1}{2}}
\newtheorem{prop}{Proposition}[section]
\newtheorem*{prop*}{Proposition}
\newtheorem{thm}{Theorem}
\newtheorem*{thm*}{Theorem}
\newtheorem{lem}[prop]{Lemma}
\newtheorem{cor}[prop]{Corollary}
\theoremstyle{definition}
\newtheorem{defn}[prop]{Definition}
\newtheorem{rem}[prop]{Remark}
\newtheorem*{rem*}{Remark}
\title[Boundary representations]{Random walks in negative curvature and boundary representations.}
\author{Kevin Boucher}
\begin{document}

\maketitle
\begin{abstract}
%Let $\Ga$ be a non-elementary group of isometries of a pointed hyperbolic space $(X,d,o)$ and $\nu$ its Patterson-Sullivan measure on the Gromov boundary, $\dd X$, of $X$ at $o$.
%A boundary representation, $\pi:\Ga\rightarrow\mathcal{U}[L^2[\dd X,\nu]]$, is associated to the dynamical system $(\Ga,\dd X,\nu)$ that is known to be irreducible \cite{MR2787597},\cite{Garncarek:aa},\cite{MR2919980} when $\Ga$ acts geometrically on $(X,d)$.\\
In this paper we establish a version of the Margulis Roblin equidistribution theorem's \cite{MR2035655} \cite{MR2057305} for harmonic measures.
As a consequence a von Neumann type theorem is obtained for boundary actions and the irreducibility of the associated quasi-regular representations \cite{MR2787597} is deduced.
%This new phenomenon is used to deduce an original proof, based on a \text{flip operator} argument, of the irreducibility of boundary representations  associated to Gromov hyperbolic groups and fundamental groups of certain finite volume manifolds with pinched negative curvatures.
%As rank 1 recurrent subgroups discussed by Furstenberg in \cite{MR0284569} as lattices in $SO(n,1)$ acting on the sphere at infinity endowed with the Lebesgue measure.
%The approach proposed use another strategy for the irreducibility based on a \textit{flip operator} argument.
\end{abstract}

\section{Introduction}
Given a non-elementary discrete group, $\Ga$, acting properly by isometries on a proper roughly geodesic hyperbolic space $(X,d)$, two main classes of measures can be constructed  on the Gromov boundary $\dd X$ of $(X,d)$: the \textit{harmonic measures} and \textit{quasiconformal measures}.
Let us briefly recall these constructions.\\

A measure $\nu$ on $\dd X$ is called $\Ga$-quasiconformal if $\Ga$ preserves the class of $\nu$ and one can find $C\ge1$, for all $g\in\Ga$ and $\nu$-almost every $\xi\in\dd X$:
$$C^{-1}e^{\gd_\Ga\beta(o,g^{-1}.o;\xi)}\le\frac{dg_*\nu}{d\nu}(\xi)\le Ce^{\gd_\Ga\beta(o,g^{-1}.o;\xi)}$$
where $\gb$ denote the Buseman cocycle on $\dd X$.\\%for all $g\in\Ga$ and $\nu$-almost every $\xi\in\dd X$.\\
Given a point $o\in X$ in $(X,d)$, the critical exponent of $\Ga$ is defined as:
$$\gd_\Ga=\limsup_r\frac{1}{r}\ln|B_X(o,r)\cap \Ga.o|$$
It is strictly positive when $\Ga$ is non-elementary.

The Patterson-Sullivan procedure consists to take a weak limit in $\tprob(X\cup\dd X)$ when $s$ goes to $\gd_\Ga$ of the sequence of probabilities:
$$\frac{1}{H(s)}\sum_{g\in\Ga}e^{-sd(g.o,o)}\gd_{g.o}$$
for $s>\gd_\Ga$ and $H(s)=\sum_{g\in\Ga}e^{-sd(g.o,o)}$.
These Patterson-Sullivan measures are quasiconformal.
Moreover these are Hausdorff measures for a visual distance on $\dd X$ when $\Ga$ acts cocompactly on $(X,d)$ \cite{MR1214072}.\\

One the other hand given a probability measure, $\mu$, with 1st moment and support not contained in a elementary group.
If $(X_n)_n$ denote the random walk on $\Ga$ starting at the identity $e\in\Ga$, the sequence of $X$-valued random variables $(X_n.o)_n$ converges almost surely to a random variable $Z$ with values in $\dd X$.
The law of $Z$ is by definition the harmonic measure on $\dd X$ for the random walk generated by $\mu$.\\

The dynamical systems associated to these measures was extensively investigated these last years.
Initiated by U. Bader and R. Muchnik in \cite{MR2035655} a particular interest grown around the quasi-regular representations associated to these systems called \textit{boundary representations}.
These representations generalized the well known parabolic inductions $L^2(G/P)$ for semisimple Lie groups $G$ and $P$ minimal parabolic subgroups of $G$.
In their work the authors proved the irreducibility of boundary representations arising from Patterson-Sullivan measures and conjectured the following:\\

{\bf Conjecture [Bader-Muchnik]}
\textit{Let $G$ be a locally compact group and $\mu$ a admissible probability measure on $G$, the boundary representation of $G$ on the Poisson boundary $P(G,\mu)$ is irreducible.
}\\

Using the interplay between quasiconformal and harmonic measures in negative curvature \cite{MR2919980} \cite{MR2346273} we propose another approach of these questions that morally attempt to understand boundary dynamics in terms of spectral properties instead of geometric ones.
This led us to investigate probabilistic analogues of certain geometric phenomenons.

\subsection{Harmonic equidistribution and its consequences.}
Let $\mu$ be a symmetric probability measure on $\Ga$.
The Markov operator $P_\mu$ associated to a continuous action $\alpha:\Ga\times K\rightarrow K$ of $\Ga$ on a compact space $K$ is the positive contraction defined on the space of continuous functions $(C^0(K),\|.\|_\infty)$ given by:
$$P_\mu\vp(\xi)=\sum_{g\in\Ga}\mu(g)\vp(g\xi)$$
for $\vp\in C^0(K)$.
A probability measure, $\nu$, on $K$ is $\mu$-stationary if $\mu*\nu=\alpha_*(\mu\otimes\nu)=\nu$.
Moreover the measure $\mu$ is called $\nu$-ergodic if $\nu$ is stationary and every $P_\mu$-invariant function is $\nu$-almost everywhere constant.\\

The principal result of this paper is the following probabilistic version of the Margulis Roblin equidistribution theorem's \cite{MR2057305} \cite{MR2035655} for stationary measures on the Gromov boundary of $(X,d)$:\\

{\bf Theorem \ref{thm:equi}}
\textit{
Let $\nu$ be a probability measure on the Gromov boundary, $\dd X$, of $(X,d)$.
Assume $\mu\in\tprob(\Ga)$ is a 1st moment, non-elementary $\nu$-ergodic probability measure on $\Ga$.\\
Then the following equidistribution holds:
$$\BE^\mu[\vp(X_n.\xi)\psi(X_n^{-1}.\eta)]=\sum_{g\in\Ga}\vp(g\xi)\psi(g^{-1}\eta)\mu^{*n}(g)\xrightarrow{n\rightarrow+\infty} \int_{\dd X}\vp d\nu\int_{\dd X}\psi d\nu$$
for all $\vp,\psi\in C^0(\dd X)$ continuous functions and $\xi,\eta\in\dd X$.
}\\

Let $X$ be a Hadamard manifold with pinched curvature.
This means that $X$ is a complete simply connected Riemannian manifold of dimension greater than $2$ and curvature, $K_X$, which satisfies: $-b^2\le K_X\le -a^2$. %
The Brownian motion $(B_t)_t$ on $X$, that is the diffusion process generated by the Laplace-Beltrami operator on $X$, converges almost surely to a random variable  $B_\infty$ with values in the geometric boundary $\dd X$. %and its distribution gives a harmonic measure denotes $\nu$.
When the action $\Ga$ on $X$ has finite covolume W. Ballman and F. Ledrappier in \cite{MR1427756} refined the discretization procedure introduced by T. Lyons and D. Sullivan in \cite{MR755228} and earlier discussed by H. Furstenberg in \cite{MR0284569} which allows to interpret the distribution of $B_\infty$, also called harmonic, has a stationary measure for a random walk performed on $\Ga$:\\

{\bf Theorem [Ballman, Ledrappier \cite{MR1427756}]}
\textit{There exists a admissible symmetric probability measure on $\Ga$ with 1st moment such that the harmonic measure of the random walk generated by $\mu$ and the distribution of the limit random variable $B_\infty$ coincide.
}\\

This result guarantees the ergodicity of the measure $\mu$ and as a direct consequence of Theorem \ref{thm:equi} one has:\\

{\bf Corollary}
\textit{Let $X$ be a Hadamard manifold with pinched negative curvature and $\Ga$ a discrete group of isometries with finite covolume. 
Let $\nu$ be the unique harmonic measure on the visual boundary $\dd X$ of $X$ for the Brownian motion $(B_t)_t$.\\
Then $\Ga$ carries a admissible probability measure with 1st moment, $\mu$, such that:
$$\BE^\mu[\vp(X_n.\xi)\psi(X_n^{-1}.\eta)]=\sum_{g\in\Ga}\vp(g\xi)\psi(g^{-1}\eta)\mu^{*n}(g)\xrightarrow{n\rightarrow+\infty} \int_{\dd X}\vp d\nu\int_{\dd X}\psi d\nu$$
for all continuous functions $\vp,\psi\in C^0(\dd X)$ and $\xi,\eta\in\dd X$.
}\\

%When the $\Ga$-action on $(X,d)$ is cocompact and $\mu$ is admissible Kaïmanovich proved in \cite{MR1815698} the topological realization of the Poisson boundary P($\Ga,\mu$) on the Gromov boundary, $\dd X$, of $(X,d)$ with the unique $\mu$-stationary measure on $\dd X$ denoted $\nu$.
Back to a more general situation where $(X,d)$ is a proper roughly geodesic hyperbolic space.
Assuming the action of $\Ga$ on $(X,d)$ is cocompact and $\mu$ is finitely supported symmetric and admissible it was proved in \cite{MR2919980} that the harmonic measure, $\nu$, can be interpreted as a quasiconformal measure coming for the Green distance on $\Ga$ that satisfies hyperbolic properties.
Conversely when $(X,d)$ is a CAT($-1$) space \cite{MR1744486} every quasiconformal measure for $\Ga$ can be interpreted as a stationary measure for a symmetric probability measure with 1st moment on $\Ga$ \cite{MR2346273}.

This interplay motivate the investigation on quasiconformal measures that can be regarded as stationary measures.
%Moreover as explained below, certain fundamental groups of finite volume manifolds give examples of non cocompact groups where it is possible to understand the relation between certain quasiconformal and harmonic measures.\\
%%%%%%%%%%%%%%%%%%%%%%%%%%%%%%%%%%%%%%%%%%%%%%%%%%%%%%%%%%%%

A measure $\nu$ on a compact metric space $(K,d)$ is called Ahlfors-regular if there exist $\alpha>0$ and $C\ge1$ such that:
$$C^{-1}r^\alpha\le\nu(B(x,r))\le Cr^\alpha$$
for $0<r<\text{Diam}(K)$.\\% where $\text{Diam}(K)$ stands for the diameter of $K$.
Examples of such measures in our setting are given by quasiconformal measures on $\dd X$ associated to discrete groups which act geometrically on $(X,d)$.\\

Considering Ahlfors-regular quasiconformal measures on $\dd X$ for the action of $\Ga$ on $(X,d)$, Theorem \ref{thm:equi} together with techniques inspired by \cite{MR2346273} \cite{MR2787597} led us to the following von Neumann type result for measure class preserving actions:\\

{\bf Theorem \ref{thm:flipap}}%\label{thm:flipap}
\textit{
Let $\mu\in\tprob(\Ga)$ be a admissible finitely supported symmetric probability on $\Ga$ and $\nu$ its harmonic measure on $\dd X$.\\
Denote $\mathcal{P}_n$, with $n\ge0$, the operator defined as:
$$\mathcal{P}_n=\BE^\mu[\frac{\pi_o(X_n)}{\Xi_o(X_n)}]=\sum_{g\in\Ga}\frac{\pi_o(g)}{\Xi_o(g)}\mu^{*n}(g)$$
where $\Xi_o=(\pi_o{\bf 1}_{\dd X}|{\bf 1}_{\dd X})_{L^2}$ denotes the Harish-Chandra function of the boundary representation $\pi_o$ on $\Ga$.\\
Then for all $\vp,\psi\in L^2[\dd X,\nu]$, square integrable functions on $\dd X$, the following von Neumann type convergence holds:
$$(\mathcal{P}_n \vp|\psi)_{L^2}\xrightarrow[n\rightarrow\infty]{} \int_{\dd X}\vp d\nu\int_{\dd X}\psi d\nu$$
%&=\BE^\mu[(\frac{\pi_o(X_n)}{\Xi_o(X_n)}\otimes\frac{\pi_o(X_n^{-1})}{\Xi_o(X_n^{-1})} \vp_1\otimes\vp_2 |\psi_1\otimes\psi_2)]\\
}\\

In other words if $\mathcal{P}_{{\bf1}_{\dd X}}$ denote the one dimensional projector on the constant function ${\bf1}_{\dd X}$ given by:
$$\mathcal{P}_{{\bf1}_{\dd X}}(\vp)=(\vp|{\bf1}_{\dd X}){\bf1}_{\dd X}$$
for $\vp\in L^2[\dd X,\nu]$.
The Theorem \ref{thm:flipap} corresponds to the weak convergence of the sequence of operators $(\mathcal{P}_n)_n$ to $\mathcal{P}_{{\bf1}_{\dd X}}$.

%Note using spectral transfert techniques \cite{} it is known that $\|\pi_o(f)\|=\|\gl(f)\|$ for all $f\in\ell ^1(\Ga)$ in particular the assumption on the support of $\mu$ implies that $\|\pi_o(\mu^{*n})\|=\|\gl(\mu^{*n})\|\le\rho^n\xrightarrow{n\rightarrow+\infty}0$.

In opposite to the measure preserving situation, the vector ${\bf1}_{\dd X}$ appears to be cyclic for the boundary representation $(\Ga, \pi_o,L^2[\dd X,\nu])$ and a standard argument leads to the following:\\

{\bf Corollary \ref{cor:irr}}%\ref{}}
\textit{
%Let $\nu$ be a Ahlfors-regular quasiconformal measure for the $\Ga$-action.\\
Let $\mu\in\tprob(\Ga)$ be a admissible finitely supported symmetric probability on $\Ga$ and $\nu$ its harmonic measure on $\dd X$.
Then the associated boundary representation $(\Ga,\pi_o,L^2[\dd X,\nu])$ is irreducible.
}\\

%The spectral formulation of this criterion also allows to conclude in the situation of Hadamard manifolds with pinched negative curvatures where $\Ga$ has only finite covolume using the Lyons-Sullivan procedure:\\

%{\bf Corollary} 
%\textit{Let $X$ be a Hadamard manifold with pinched negative curvature and $\Ga$ a group acting properly on $X$ with finite covolume. 
%Assume the unique harmonic measure on the visual boundary, $\dd X$, for the Brownian motion $(B_t)_t$ is  Ahlfors-regular and $\Ga$-quasiconformal.
%Then the boundary representation $(\Ga,\pi_o,L^2[\dd X,\nu])$ is irreducible.
%}\\

%Examples can be obtained when $X$ has constant negative curvature where it is known that this harmonic measures in proportional to the Riemannian measure on the sphere at infinity.

\subsection{Outlines}
$ $\\
In Section \ref{sec:prelim}, we remind the reader standard facts about random walk on groups as well as some part of the theory of Patterson-Sullivan on quasiconformal measures.
Assuming no prior familiarity with this topic we propose a relatively self-contained account of the relevant ingredients needed for the understanding of the rest.
In Section \ref{sec:equi} we prove Theorem \ref{thm:equi}.\,% and discuss its consequences in terms of ergodicity of Markov operators.
The Section \ref{sec:main} is devoted to the proof of Theorem \ref{thm:flipap} which is splitted in two parts: the first part is a reduction from matrix coefficients analysis to equidistribution, cf. Proposition \ref{prop:coef}, and the second part is dedicated to the uniform boundedness of the sequence of operators $(\mathcal{P}_n)_n$ introduced in Theorem \ref{thm:flipap} statement.
%The last Section \ref{sec:} is devoted discrete Lyon-Sullivan procedure and the construction of ergodic measure supported on recurrent subgroups.

\subsection{Notations and terminologies}
$ $\\
Usually $(X,d)$ stands for a locally finite Gromov hyperbolic space pointed at $o\in X$ upon which a discrete group $\Ga$ acts geometrically.
An action of  $\Ga$ on $(X,d)$ is called \textit{geometric} if it is properly discontinuous and cocompact.

Except mention groups considered are assumed to be non elementary, i.e. non-virtually cyclic.

A probability measure $\mu\in\tprob(\Ga)$ on a group $\Ga$ is called admissible if its support generates $\Ga$ as a semi-group and symmetric if $\mu(g)=\mu(g^{-1})$ for all $g\in\Ga$.
Except mention in the rest every measures considered on $\Ga$ are assumed to have a 1st moment, i.e. the quantity $\sum_{g\in\Ga}d(g.o,o)\mu(g)$ is finite.\\

We will use the notations: $|x|_z=d(x,z)$ and $|g|_z=d(gz,z)$ for $x,z\in X$ and $g\in\Ga$.

In order to avoid the escalation of constants coming from estimates up to controlled additive or multiplication error terms we will use the following conventions.
Given two real valued functions, $a,b$, of a set $Z$, we write $a\preceq b$ if there exists $C>0$ such that $a(z)\le Cb(z)$ for all $z\in Z$ and $a\asymp b$ if $a\preceq b$ and $b\preceq a$.
Analogously we write $a\lesssim b$ if there exists $c$ such that $a(z)\le b(z)+c$ and $a\approx b$ if $a\lesssim b$ and $b\lesssim a$.\\

%%%%%%%%%%%%%%%%%%%%%%%%%%%%%%%%%%%%%%%%%%%%%%%%%%%%%%%%%%%%%%%%%%%
%%%%%%%%%%%%%%%%%%%%%%%%%%%%%%%%%%%%%%%%%%%%%%%%%%%%%%%%%%%%%%%%%%%
%%%%%%%%%%%%%%%%%%%%%%%%%%%%%%%%%%%%%%%%%%%%%%%%%%%%%%%%%%%%%%%%%%%
%%%%%%%%%%%%%%%%%%%%%%%%%%%%%%%%%%%%%%%%%%%%%%%%%%%%%%%%%%%%%%%%%%%

\section{Preliminaries}\label{sec:prelim}
\subsection{Hyperbolic spaces and compactifications.}
$ $\\
Given $(X,d)$ a discrete locally finite metric space, the \textit{Gromov product} on $X$ at the basepoint $o\in X$ is defined as:
$$(x|y)_o=\half(|x|_o+|y|_o-d(x,y))$$
for $x,y\in X$.
The space $(X,d)$ is called $(\gd)$-hyperbolic, with $\gd\ge0$, if it satisfies:
$$(x|y)_w\ge\min\{(x|z)_w,(z|y)_w\}-\gd$$
for all $x,y,z,w\in X$.\\

\begin{defn}
A discrete metric space $(X,d)$ is called roughly geodesic if there exists a constant $C$ such that for all pair of points $x,y\in X$, there exists a map $p:[0,T]\rightarrow X$ such that $p(0)=x$, $p(T)=y$ and
$$|t-t'|-C\le d(p(t),p(t'))\le|t-t'|+C$$
for all $t,t'\in[0,T]$.
\end{defn}
Note that hyperbolicity is preserved by quasi-isometries between proper roughly geodesic spaces \cite{Bonk2011}.\\

Given such a metric space, $(X,d)$, one can associate a compact space, $\dd X$, called Gromov boundary.
Let us recall briefly a construction of this object and its properties.

%%%%%%%%%%%%%%%%%%%%%%%%%%%%%%%%%%%%%%%%%%%%%%%%%%%%%%%%%%%%
%%%%%%%%%%%%%%%%%%%%%%%%%%%%%%%%%%%%%%%%%%%%%%%%%%%%%%%%%%%%
\subsubsection{Hyperbolic boundaries viewed as equivalence classes of sequences}
$ $\\
A sequence $(x_n)_n$ in $X$ \textit{goes to infinity} if $\lim_{n,m}(x_n|x_m)_o\rightarrow+\infty$.\\
The boundary $\dd X$ of $X$ can be defined as the equivalence classes on the set of sequences which go to infinity, $X^\infty\subset X^\BN$, endowed with the equivalence relation:
$$(a_n)\sim(b_n)\quad \text{if and only if}\quad \lim_{n,m}(a_n|b_m)_o\rightarrow\infty$$
with $(a_n)_n,(b_n)_n\in X^\infty$.\\
This construction does not depend on the choice of the basepoint $o$.
The class of the sequence $(x_n)_n$ is denoted $\lim_nx_n=\xi$.\\

Identifying stationary sequences with elements of $X$ the Gromov product extends to $\ol{X}=X\cup\dd X$ by the formula:
$$(x|y)_o=\sup_{(x_n),\,(y_m)}\liminf_{n,m}(x_n|y_m)_o$$
where the $\sup$ is taken over all sequences $(x_n)_n$ and $(y_m)_m$ that represent respectively $x$ and $y$ in $\ol{X}$.\\

This extended product satisfies the following properties \cite{MR1744486}:
\begin{enumerate}
\item $(x|y)_o=\infty$ if and only if $x,y\in\dd X$ and $x=y$;
\item $(x|y)_o\ge\min\{(x|z)_o,(z|y)_o\}-2\gd$ for all $x,y,z\in \ol{X}$;
\item for all $\xi,\eta\in\dd X$ and $(x_n)_n,\, (y_m)_m$ with $\lim_nx_n=\xi$ and $\lim_my_m=\eta$ one has:
$$(\xi|\eta)_o-2\gd\le\liminf_{n,m}(x_n|y_m)_o\le (\xi|\eta)_o.$$
\end{enumerate}

Let $\e>0$ and define the kernel:
$$\rho_{\e,o}:\ol{X}\times \ol{X}\to\BR_+,\quad \rho_\e(x,y)=
\begin{cases}
e^{-\e(x|y)_o}&\text{$(x,y)\in \ol{X}^{\times 2}\setminus \gD_X$}\\
0&\text{otherwise}
\end{cases}$$
where $\gD_X=\{(x,x)\,|\,x\in X\}$ is the diagonal points that belong to $X$.\\

Using the Gromov product properties observe that:
\begin{enumerate}
\item $\rho_{\e,o}(x,y)=\rho_{\e,o}(y,x)$ for all $x,\,y\in \ol{X}$;
\item $\rho_{\e,o}(x,y)=0$ if and only if $x=y$;
\item for all $x,y,z\in\ol{X}$:
$$\rho_{\e,o}(x,y)\le(1+\e')\max\{\rho_{\e,o}(x,y),\rho_{\e,o}(y,z)\}$$
with $\e'=e^{2\gd\e}-1$.
\end{enumerate}

The \textit{chains-pseudo-distance} associated is defined as:
$$\gth_{\e,o}(x,y)=\inf\{\sum_{i=1}^n\rho_{\e,o}(x_i,x_{i+1})\mid x_1=x,\dots,x_{n+1}=y\}$$
and for $\e'<\sqrt{2}-1$ satisfies:
$$(1-2\e')\rho_{\e,o}(x,y)\le\gth_{\e,o}(x,y)\le\rho_{\e,o}(x,y)$$
for all $(x,y)\in \ol{X}^{\times 2}\setminus \gD_X$.\\
In particular for such choice of $\e$, $\gth_{\e,o}$ is a distance on $\ol{X}$.
In the rest $\e'$ is assumed to be strictly lower than $\sqrt{2}-1$.\\
Since $e^{\e|x|_o}\le\rho_{\e,o}(x,y)$ for all $x\neq y$ in $X$ and the induced topology on $X$ by $\gth_{\e,o}$ is discrete.
\begin{defn}
Let $(X,d)$ be a proper roughly geodesic hyperbolic space.
A distance $d'$ on boundary $\dd X$ is called visual if there exist $\e>0$, $\gl\ge1$ and $o\in X$ such that:
$$\gl^{-1}\rho_{\e,o}(\xi,\eta)\le d'(\xi,\eta)\le \gl\rho_{\e,o}(\xi,\eta)$$
for all $\xi,\eta\in \dd X$.
\end{defn}
\noindent Note that the restriction of $\gth_{\e,o}$ to $\dd X$ is a visual metric. Its restriction is denoted $d_{o,\e}$ or simply $d_{o}$ in the rest .\\

A proper sequence $(x_n)_n$ in $X$ is a Cauchy sequence for $\gth_{\e,o}$ if and only if 
$$\lim_{n,m}(x_n|x_m)_o=+\infty$$
and another proper sequence converges to the same boundary point for $\gth_{\e,o}$ if and only if
$$\lim_{n,m}(x_n|y_m)_o=+\infty$$

In other words the metric space $(\ol{X},\gth_{\e,o})$ can be seen as the completion of $(X,\gth_{\e,o}|_X)$.\\

Endowed with this topology $\ol{X}=X\cup\dd X$ satisfies the following properties:
\begin{enumerate}
\item The topological space $\ol{X}$ is compact;
\item The embedding $X\hookrightarrow\ol{X}$ is a homeomorphism on his image, in particular $X$ is open in $\ol{X}$;
\item for all sequence $(\xi_n)_n$ in $\dd X$ one has $\xi_n\rightarrow \xi\in\dd X$ if and only if $(\xi_n|\xi)_o\rightarrow\infty$.
\end{enumerate}
\subsection{Busemann compactification and strongly hyperbolic metric spaces}%%%
\begin{defn}
A proper roughly geodesic metric space $(X,d)$ is strongly hyperbolic if there exists $\e>0$ such that
$$\exp(-\e(x|y)_w)\le \exp(-\e(x|z)_w)+\exp(-\e(z|y)_w)$$
for all $x,y,z,w\in X$.
\end{defn}
Some remarkable consequences of the strong hyperbolicity \cite{MR3551185} are:
\begin{enumerate}
\item the Gromov product $(.|.)_w$ extends continuously to $\ol{X}=X\cup\dd X$ for all $w\in X$;
\item the kernel $\rho_{\e,o}$, introduced above, is an actual metric on $\dd X$ for $\e$ small enough.
\end{enumerate}
These facts guarantee sharper conformal properties on the boundary $\dd X$.\\

Given a strongly hyperbolic space $(X,d)$, the Busemann function at $\xi\in\dd X$ is well defined as:
$$\gb_\xi(x,y)=2(\xi|y)_x-|y|_x=\lim_{z\rightarrow\xi}d(z,x)-d(z,y)$$
for all $x,y\in X$. In particular the Busemann boundary $\dd_\infty X$ and the Gromov boundary $\dd X$ coincide \cite{BusemannHerbert1955Tgog}.\\
In the rest $\gb_\xi(o,x)$ is denoted $\gb_\xi(x)$ for all $\xi\in\dd X$ and $x\in X$.\\

Another important feature of the strong hyperbolicity is the \textit{metric conformality} for the action of the group of isometries of $X$ on $\dd X$:
$$d_{o,\e}(g\xi,g\eta)=\exp(\frac{\e}{2}\gb_\xi(g^{-1}o))\exp(\frac{\e}{2}\gb_\eta(g^{-1}o))d_{o,\e}(\xi,\eta)$$
for all $\xi,\eta\in\dd X$ and $g\in\text{Isom}(X,d)$, where $d_{o,\e}=\rho_{\e,o}$.\\

As a consequence if $\nu$ denote the Hausdorff measure of dimension $D_\e$ on $(\dd X,d_{o,\e})$ \cite{MR3551185}, the following \textit{measure conformal} relation holds:
$$\frac{dg_*\nu}{d\nu}(\xi)=\exp(\e D_\e \gb_\xi(g^{-1}o))=\exp(\gd_\Ga \gb_\xi(g^{-1}o))$$
for all $\xi\in\dd X$ and $g\in\text{Isom}(X,d)$.\\
\begin{rem}
The measure $\nu$ is independent of the parameter $\e$ chosen.\\
Examples of strongly hyperbolic space are given by Hadamard manifolds with pinched negative curvatures more generally CAT($-1$) metric spaces \cite{MR3551185} and, as exposed in subsection \ref{sub:green}, Green metric structures on hyperbolic groups.
\end{rem}

%%%%%%%%%%%%%%%%%%%%%%%%%%%%%%%%%%%%%%%%%%%%%%%%%%%%%%%%%%%%
%%%%%%%%%%%%%%%%%%%%%%%%%%%%%%%%%%%%%%%%%%%%%%%%%%%%%%%%%%%%
\subsection{Boundary retractions}
$ $\\
As above $(X,d)$ is a proper roughly geodesic hyperbolic space.\\
A \textit{boundary retraction} is defined as a continuous map $f:\ol{X}\to\dd X$ such that $f|_{\dd X}=id_{\dd X}$.
Such a retraction induces an isometric operator:
$$E_f:(C^0(\dd X),\|.\|_\infty)\rightarrow (C^0(\ol{X}),\|.\|_\infty),\quad \vp\mapsto\vp\circ f$$
,i.e. $\|\vp\circ f\|_\infty=\|\vp\|_\infty$ for all $\vp\in C^0(\dd X)$, called \textit{boundary extension}.
%Note that $E_f(\vp)|_{\dd X}=\vp$ for all $\vp\in C(\dd X)$.

%%%%%%%%%%%%%%%%%%%%%%%%%%%%%%%%%%%%%%%%%%%%%%%%%%%%%%%%%%%%
%%%%%%%%%%%%%%%%%%%%%%%%%%%%%%%%%%%%%%%%%%%%%%%%%%%%%%%%%%%%
\subsubsection{Existence of boundary retractions}
$ $\\
The \textit{shadow} of a ball centered at $x\in X$ with radius $R$ viewed at the basepoint $o\in X$ is defined as:
$$O_o(x,R)=\{\xi\in\dd X|\text{$\exists c$ s.t $c(0)=o$, $c(\infty)=\xi$, $c(\BR_+)\cap B(x,R)\neq\emptyset$}\}$$
where $c$ denote a rough geodesic in $X$.\\
A shadow can equivalently be described in terms of the Gromov product as: 
$$O_o(x,R)=\{\xi\in\dd X|\,\text{$(\xi|x)_o\ge |x|_o-R$}\}$$
Note that there exists $R_0>0$ which does not depend on $x\in X$ such that for all $R\ge R_0$, $O_o(x,R)\neq\emptyset$ \cite{MR1214072}.\\

An example of a boundary retraction can be obtained as follows:
\begin{lem}
Let $R>0$ be a large positive constant and $f:X\rightarrow \dd X$ such that $f(x)\in O_o(x,R)$ for all $x\in X$.
Then $f$ is well defined and extends to a boundary retraction on $\ol{X}$ called linear boundary retraction.
\end{lem}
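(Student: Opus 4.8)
The plan is to verify the two assertions separately: first that the prescription $f(x)\in O_o(x,R)$ determines a genuine map $X\to\dd X$ for $R$ large, and then that this map extends continuously to $\ol X$ with $f|_{\dd X}=\id_{\dd X}$. For the first point, I would invoke the fact recorded just above the statement that there is $R_0>0$, independent of $x$, with $O_o(x,R)\neq\emptyset$ for all $R\ge R_0$; fixing any such $R$ and choosing one point of $O_o(x,R)$ for each $x$ (this requires no canonical choice, just that the sets be nonempty) gives a well-defined $f$. Using the Gromov-product description $O_o(x,R)=\{\xi\in\dd X:(\xi|x)_o\ge|x|_o-R\}$ one sees moreover that $\diam_{d_{o,\e}}O_o(x,R)\preceq e^{-\e(|x|_o-R)}$, so the choice is asymptotically rigid: any two admissible retractions agree to within $O(e^{-\e|x|_o})$.

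The core of the argument is continuity on $\ol X$. Since the topology of $\ol X$ is metrised by $\gth_{\e,o}$ (equivalently by $\rho_{\e,o}$ on $\dd X$), it suffices to check sequential continuity. Let $x_n\to p$ in $\ol X$. If $p\in X$, then because $X$ is open and discrete in $\ol X$ (properties (2) of the compactification and the remark that $\gth_{\e,o}$ induces the discrete topology on $X$), eventually $x_n=p$, so $f(x_n)=f(p)$ and there is nothing to prove. If $p=\xi\in\dd X$, then $x_n\to\xi$ means $(x_n|\xi)_o\to\infty$; in particular $|x_n|_o\to\infty$. I claim $f(x_n)\to\xi$, i.e. $(f(x_n)|\xi)_o\to\infty$. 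By the extended Gromov inequality (property (2) with constant $2\gd$),
\[
(f(x_n)|\xi)_o\ge\min\{(f(x_n)|x_n)_o,(x_n|\xi)_o\}-2\gd .
\]
Now $(x_n|\xi)_o\to\infty$ by hypothesis, and $(f(x_n)|x_n)_o\ge|x_n|_o-R\to\infty$ because $f(x_n)\in O_o(x_n,R)$. Hence the right-hand side tends to $+\infty$, which is exactly $f(x_n)\to\xi$ in $\ol X$. This also handles the case of a sequence $(\xi_n)_n$ in $\dd X$ converging to $\xi$, since then $f(\xi_n)=\xi_n$ and $f(\xi)=\xi$. Thus $f$ is continuous, and $f|_{\dd X}=\id_{\dd X}$ holds by construction, so $f$ is a boundary retraction.

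The only genuinely delicate point is the continuity at boundary points, and within that the one inequality above does all the work once one observes that \emph{both} arguments of the $\min$ blow up; the term $(f(x_n)|x_n)_o\to\infty$ is precisely what the shadow condition $f(x)\in O_o(x,R)$ buys us, which is why $R$ must be taken at least $R_0$ (otherwise the shadows could be empty and $f$ would not even be defined). No finer estimate is needed, but if one wants the qualifier ``linear'' to be meaningful one can additionally remark, using $\diam O_o(x,R)\preceq e^{-\e(|x|_o-R)}$, that $\gth_{\e,o}(f(x),\xi)\preceq e^{-\e(x|\xi)_o}+e^{-\e(|x|_o-R)}$, giving a quantitative modulus of continuity near $\dd X$; this is optional and not required for the statement.
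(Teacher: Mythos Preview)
Your proof is correct and follows essentially the same route as the paper's: reduce to sequential continuity at boundary points (using that $X$ is discrete for interior points), and then apply the hyperbolic inequality $(f(x_n)|\xi)_o\ge\min\{(f(x_n)|x_n)_o,(x_n|\xi)_o\}-2\gd$ together with the shadow condition $(f(x)|x)_o\ge|x|_o-R$ to conclude. The additional remarks you make about well-definedness and the quantitative diameter estimate are correct but not present in the paper's more terse argument.
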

\begin{proof}
Since $X$ is discrete it is enough to prove for every sequence $(x_n)_n$ in $X$ converging to a point $\xi\in \dd X$, $(f(x_n))_n$ converges to $\xi$.\\
The definition of $f$ implies $(f(x)|x)_o\ge |x|_o-R$ for all $x\in X$.
On the other hand $\gth_{\e,o}(y_n,\eta)\rightarrow 0$ if and only if $(y_n|\eta)_o\rightarrow +\infty$ for any $(y_n)_n$ in $\ol{X}$ and $\eta\in\dd X$.\\
Thus given $(x_n)_n$ with $x_n\rightarrow \xi$ since:
$$(f(x_n)|\xi)_o\ge\min\{(f(x_n)|x_n)_o,(x_n|\xi)_o\}-\gd$$
one has $\gth_{\e,o}(f(x_n),\xi)\rightarrow 0$, in other words $f$ can be extended to $\ol{X}$ by $f(\xi)=\xi$ for all $\xi\in\dd X$.
\end{proof}

In some sense every boundary retraction is of this form:
\begin{lem}
Let $f$ be a boundary retraction of $X$.
Then there exists a proper function $\vp:\BR_+\rightarrow \BR_+$ such that:
$$(f(x)|x)_o\ge\vp(|x|_o)$$
for all $x\in X$.
\end{lem}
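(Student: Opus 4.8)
The plan is to argue by contradiction, exploiting the discreteness of $X$ together with the continuity of the retraction $f$ on the compact space $\ol{X}$. Suppose no such proper function $\vp$ exists. Then there is a constant $M>0$ and a sequence $(x_n)_n$ in $X$ with $|x_n|_o\to+\infty$ but $(f(x_n)|x_n)_o\le M$ for all $n$. The first step is to pass to a subsequence: since $\ol{X}$ is compact, we may assume $x_n\to\xi$ for some point $\xi$, and because $|x_n|_o\to+\infty$ we have $\xi\in\dd X$; similarly we may assume $f(x_n)\to\zeta$ for some $\zeta\in\ol{X}$.

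The second step is to identify $\zeta$. By continuity of $f$ on $\ol{X}$ and the fact that $f|_{\dd X}=\id_{\dd X}$, we get $f(x_n)\to f(\xi)=\xi$, so $\zeta=\xi$. Hence both $x_n\to\xi$ and $f(x_n)\to\xi$ in $(\ol{X},\gth_{\e,o})$, which by the boundary characterization (property (3) of the topology on $\ol{X}$, using that $(f(x_n)|\xi)_o\to\infty$ exactly when $\gth_{\e,o}(f(x_n),\xi)\to0$) means
$$(x_n|\xi)_o\to+\infty\quad\text{and}\quad (f(x_n)|\xi)_o\to+\infty.$$
The third step is to derive the contradiction from the hyperbolic inequality: the extended Gromov product satisfies
$$(f(x_n)|x_n)_o\ge\min\{(f(x_n)|\xi)_o,(\xi|x_n)_o\}-2\gd,$$
and since both terms on the right tend to $+\infty$, we conclude $(f(x_n)|x_n)_o\to+\infty$, contradicting $(f(x_n)|x_n)_o\le M$.

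Finally, to extract an honest proper function from this, I would set $\vp(t)=\inf\{(f(x)|x)_o : x\in X,\ |x|_o\ge t\}$ and note that the argument above shows $\vp(t)\to+\infty$ as $t\to+\infty$; replacing $\vp$ by $t\mapsto\min\{\vp(t),t\}$ (or a continuous minorant thereof) if one wants $\vp:\BR_+\to\BR_+$ in the stated form, one obtains a proper function with $(f(x)|x)_o\ge\vp(|x|_o)$ for all $x\in X$. The main obstacle is really just the bookkeeping around which space the various limits are taken in and making sure the Gromov product is being used in its extended form on $\ol{X}$ (properties (2) and (3) of the extended product) rather than only on $X$; once the compactness-plus-continuity skeleton is in place the estimate is immediate. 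One should also note the subtlety that $f$ is a priori only defined as a map $\ol{X}\to\dd X$, so "$f(\xi)=\xi$ for $\xi\in\dd X$" is exactly the retraction hypothesis and no separate extension argument (as in the previous lemma) is needed here.
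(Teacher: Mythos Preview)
Your argument is correct. The contradiction via compactness of $\ol X$, continuity of $f$, and the extended hyperbolic inequality is sound, and your final choice $\vp(t)=\inf\{(f(x)|x)_o:\,|x|_o\ge t\}$ is nondecreasing and, by what you proved, tends to $+\infty$, hence is proper.

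The paper takes a somewhat different route: it fixes an auxiliary \emph{linear} boundary retraction $f'$ (from the preceding lemma, satisfying $(f'(x)|x)_o\ge|x|_o-R$) and uses the hyperbolic inequality in the form
\[
(f(x)|x)_o\ge\min\{(f'(x)|x)_o,\,(f(x)|f'(x))_o\}-\gd,
\]
so that properness of $\vp$ reduces to properness of the map $x\mapsto(f(x)|f'(x))_o$. That last fact is then justified by the same compactness--continuity mechanism you invoke (two continuous maps into $\ol X$ that agree on $\dd X$ must have $(f(x)|f'(x))_o\to\infty$ as $x\to\dd X$). In effect, the paper's proof is your argument applied to the pair $(f,f')$ rather than to the pair $(f,\id_{\ol X})$; your version is more direct and does not rely on the existence of linear retractions, while the paper's version makes the explicit shape of $\vp$ visible through the comparison with $|x|_o-R$.
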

\begin{proof}
Let $R>0$ be a positive constant and $f'$ a linear boundary retraction such that $(f'(x)|x)_o\ge|x|_o-R$ for $x\in X$.\\
Observe that:
$$(f(x)|x)_o\ge\min\{(f'(x)|x)_o,(f(x)|f'(x))_o\}-\gd\ge\vp(|x|_o)$$
with
$$\vp(t)=\inf_{\lfloor t\rfloor \le|x|<\lfloor t\rfloor +1}[\min\{\lfloor t\rfloor-R,(f(x)|f'(x))_o\}-\gd]$$
Therefore $\vp$ is proper iff the map $x\mapsto (f(x)|f'(x))_o$ on $X$ is proper.\\
A similar argument than the one used in the proof of the next lemma, based on compactness of $\dd X$, the continuity of $f$ and $f'$ together with the fact that these two functions are identical on the boundary guarantees the properness of the last map.
\end{proof}

%%%%%%%%%%%%%%%%%%%%%%%%%%%%%%%%%%%%%%%%%%%%%%%%%%%%%%%%%%%%
%%%%%%%%%%%%%%%%%%%%%%%%%%%%%%%%%%%%%%%%%%%%%%%%%%%%%%%%%%%%
\subsubsection{Boundary retraction and sequence of measures with accumulation at infinity}
$ $\\
A sequence of probabilities on $\ol{X}$ has \textit{accumulation at infinity} if for all compact $K\subset X$ one has $\mu_n(K)\rightarrow 0$.
In other words any limit of a $(\mu_n)_n$ subsequence is supported in $\dd X$.
\begin{lem}
Let $f$ be a boundary retractions and $(\mu_n)_n$ a sequence of probabilities on $\ol{X}$ with accumulation at infinity.\\
If $f_*\mu_n$ converges weakly to $\mu\in\tprob(\dd X)$, then for every other boundary retraction $f'$, $f'_*\mu_n$ converges weakly to $\mu$.
\end{lem}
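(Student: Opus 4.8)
The plan is to show that the two measures $f_*\mu_n$ and $f'_*\mu_n$ become indistinguishable against any test function as $n\to\infty$, so that any weak limit of the latter must agree with the limit $\mu$ of the former. Fix $\vp\in C^0(\dd X)$; it suffices to prove that
$$\Bigl|\int_{\ol X}\vp\circ f\,d\mu_n-\int_{\ol X}\vp\circ f'\,d\mu_n\Bigr|\xrightarrow{n\to\infty}0,$$
since then $\int\vp\,d(f'_*\mu_n)=\int\vp\circ f'\,d\mu_n$ has the same limit as $\int\vp\circ f\,d\mu_n=\int\vp\,d(f_*\mu_n)\to\int\vp\,d\mu$, forcing $f'_*\mu_n\to\mu$ weakly.

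The first step is to control $\vp\circ f-\vp\circ f'$ on $\ol X$. On $\dd X$ this difference vanishes since $f|_{\dd X}=f'|_{\dd X}=\id_{\dd X}$. Away from the boundary it need not be small pointwise, but it is bounded by $2\|\vp\|_\infty$ everywhere. The key observation, to be extracted from the previous lemma and its proof, is that for the linear retraction (and hence, via the triangle-type inequality for the Gromov product, for any boundary retraction) one has $(f(x)\,|\,f'(x))_o\to\infty$ as $|x|_o\to\infty$: this is exactly the properness statement established there using compactness of $\dd X$ and continuity of $f,f'$ together with their agreement on $\dd X$. Combined with uniform continuity of $\vp$ on the compact metric space $(\dd X,d_{o,\e})$ and $d_{o,\e}(f(x),f'(x))\asymp e^{-\e(f(x)|f'(x))_o}$, this yields: for every $\gep>0$ there is a compact set $K=K_\gep\subset X$ such that $|\vp(f(x))-\vp(f'(x))|<\gep$ for all $x\in\ol X\setminus K$ (points of $\dd X$ are included trivially).

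The second step splits the integral over $K$ and its complement:
$$\Bigl|\int_{\ol X}(\vp\circ f-\vp\circ f')\,d\mu_n\Bigr|\le\int_K 2\|\vp\|_\infty\,d\mu_n+\int_{\ol X\setminus K}\gep\,d\mu_n\le 2\|\vp\|_\infty\,\mu_n(K)+\gep.$$
By the accumulation-at-infinity hypothesis, $\mu_n(K)\to0$ for the fixed compact $K$, so $\limsup_n\bigl|\int_{\ol X}(\vp\circ f-\vp\circ f')\,d\mu_n\bigr|\le\gep$. Letting $\gep\to0$ gives the claim, and hence the lemma.

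\textbf{Main obstacle.} The only delicate point is the uniform estimate in the first step, namely that $|\vp\circ f-\vp\circ f'|$ can be made uniformly small outside a compact set; everything else is a routine $\gep$-splitting argument. This reduces, as indicated, to the properness of $x\mapsto(f(x)\,|\,f'(x))_o$, which is the compactness argument already invoked in the proof of the preceding lemma — one must simply make sure that argument applies to an \emph{arbitrary} pair of boundary retractions, not just to one linear retraction, but the same reasoning (if $x_k\to\xi\in\dd X$ then $f(x_k),f'(x_k)\to\xi$, so their Gromov product blows up; if $|x_k|_o\to\infty$ along a net leaving every compact set, pass to a subnet converging in $\ol X$, necessarily to a boundary point) goes through verbatim.
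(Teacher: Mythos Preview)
Your proof is correct and follows essentially the same $\gep$-splitting strategy as the paper: show that $\vp\circ f-\vp\circ f'$ is uniformly small off a compact set $K\subset X$, then use accumulation at infinity to kill $\mu_n(K)$. The only difference is in how that uniform smallness is obtained. You go through the properness of $x\mapsto(f(x)\,|\,f'(x))_o$ (borrowed from the preceding lemma) together with uniform continuity of $\vp$ on $\dd X$; the paper instead uses directly that $E_f(\vp)=\vp\circ f$ and $E_{f'}(\vp)=\vp\circ f'$ are uniformly continuous on the compact $\ol X$ and agree on $\dd X$, then covers $\dd X$ by finitely many $\gth_{\e,o}$-balls to produce the neighborhood $U$ of $\dd X$ on which both extensions are within $\gep/2$ of each other, taking $K=U^c$. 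Your route has the small advantage of recycling the previous lemma; the paper's route is self-contained and avoids re-arguing properness. Both are equally valid and equally short.
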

\begin{proof}
Let $f'$ be another boundary retraction.
Since $E_{f}(\vp)$ and $E_{f'}(\vp)$ are continuous functions on the compact space $\ol{X}$ and thus uniformly continuous.
Moreover $E_{f}(\vp)|_{\dd X}=E_{f'}(\vp)|_{\dd X}=\vp$.\\
Therefore given $\e>0$, there exists $\alpha>0$ such that for all $\xi\in\dd X$ and $x\in B_{\gth_{\e,o}}(\xi,\alpha)\subset\ol{X}$, $|E_{f}(\vp)(x)-\vp(\xi)|,\,|E_{f^{\prime}}(\vp)(x)-\vp(\xi)|\le\frac{\e}{4}$.\\
Because $\dd X$ is compact one can find a finite number, $n$, of $\xi_i\in\dd X$, $i=1,\dots, n$ such that $\dd X\subset \cup_{i=1}^nB_{\gth_{\e,o}}(\xi_i,\alpha)=U$.\\
By construction of $U\subset\ol{X}$, $|E_{f^{\prime}}(\vp)(x)-E_{f}(\vp)(x)|\le\half\e$ for all $x\in U$.\\
Let $K=U^c\subset X$ be the compact complement of $U$ in $\ol{X}$.
Since $\mu_n$ has accumulation at infinity, there exists $n_0$, for all $n\ge n_0$, $\mu_n(K)\le\frac{\e}{4\|\vp\|_\infty}$.
Together these estimates lead to:
\begin{align*}
|\int_{\dd X}\vp\, f_*\mu_n-\int_{\dd X}\vp\, f'_*\mu_n|&\le |\int_KE_{f}(\vp)-E_{f^{\prime}}(\vp)\,d\mu_n|+|\int_UE_{f}(\vp)-E_{f^{\prime}}(\vp)\,d\mu_n|\\
&\le \mu_n(K)(\|E_{f^{\prime}}(\vp)\|_\infty+\|E_{f}(\vp)\|_\infty)+\half\e\mu_n(U)\\
&\le 2\mu_n(K)\|\vp\|_\infty+\half\e\le \e
\end{align*}
which concludes the proof.
\end{proof}

In the rest a boundary retraction, $f$, is fixed and for all $x\in X$ we denote $\hat{x}=f(x)\in\dd X$.
Moreover we extend this notation for $m\in\tprob(\ol{X})$ (respectively, $\vp\in C^0(\dd X)$) by $\hat{m}=f_*m\in\tprob(\dd X)$ (respectively, $\hat{\vp}=E_f(\vp)\in C^0(\ol{X})$).\\
Note that for all $\vp\in C^0(\dd X)$, $x\in \ol{X}$ and $m\in\tprob(\ol{X})$ the following identities hold: 
$$\hat{\vp}(x)=\vp(\hat{x}),\quad \hat{m}(\vp)=m(\hat{\vp}).$$

%%%%%%%%%%%%%%%%%%%%%%%%%%%%%%%%%%%%%%%%%%%%%%%%%%%%%%%%%%%%
%%%%%%%%%%%%%%%%%%%%%%%%%%%%%%%%%%%%%%%%%%%%%%%%%%%%%%%%%%%%INTRODUCE THE MATERIAL NEEDED IN THE REST.
\subsection{Random walks and hyperbolic groups.}\label{sub:green}
$ $\\
In the first part of this subsection $\Ga$ is a general discrete group .\\

Given a probability $\mu\in\tprob(\Ga)$ on $\Ga$ a (left) $\mu$-random walk on $\Ga$ is a Markov chain with states space $\Ga$ and transition probabilities $p_\mu(g,h)=\mu(g^{-1}h)$.\\
We call \textit{$\mu$-random walk starting at $g\in\Ga$} the unique Markov chain (up to equivalence) with initial law $\gd_g$ and transition probabilities $p_\mu$.\\

\subsubsection{Standard models of random walks}
$ $\\
Let $(\Ga^{\BN^{*}},\mathcal{B},\mu^{\BN^{*}})$ be the probability space obtained as infinite product of $(\Ga,\mathcal{P}(\Ga),\mu)$, where $\mathcal{B}$ is the $\sigma$-algebra generated by cylinders.
The coordinate projectors 
$$H_k:\Ga^{\BN^{*}} \rightarrow\Ga,\quad H_k((\go_i)_i)=\go_k$$
form a sequence of $\mu$ distributed independent random variables and a model for random walk starting at $g_0$ is given by
$X_0=g_0$ and $X_n=H_1\dots H_n$ for $n\ge 1$.
In other words:
$$\BP(X_{n+1}=h|X_0=h_0,\dots,X_n=g)=
\begin{cases}
0&\text{if $h_0\neq g_0$}\\
p_\mu(g,h)&\text{if $h_0=g_0$}
\end{cases}$$
The canonical model for the random walk starting at $g\in\Ga$ is defined as the image of $(\Ga^{\BN^{*}},\mu^{\BN^{*}})$ by:
$$\mathcal{R}_g:\Ga^{\BN{*}}\rightarrow \Ga^{\BN},\quad \mathcal{R}_g((\go_i)_i)=
\begin{cases}
g&\text{for $n=0$}\\
gH_1\dots H_n((\go_i)_i)&\text{$n\ge1$}
\end{cases}
$$
where the random position at time $n$, $X_n$, is given by the $n$-th coordinate projection on $\Ga$.\\
In the rest we denote $\gO=\Ga^{\BN}$ endowed with $\gs$-algebra generated by cylinders and the probability:  
$$\BP={\mathcal{R}_e}_*\mu^{\BN_+}=\gd_e\otimes\bigotimes_{n=1}^\infty\mu^{*n}$$
or $\BP^\mu$ to insist on the $\mu$ dependence.
\begin{rem}
If $(X_n)_n$ is a $\mu$-random walk starting at $e\in\Ga$, then $(gX_n)_n$ gives a $\mu$-random walk starting at $g\in\Ga$. 
Moreover the corresponding law on $\gO$ is given by $\BP_g=g_*\BP={\mathcal{R}_g}_*\mu^{\BN_+}=\gd_g\otimes\bigotimes_{n=1}^\infty g_*\mu^{*n}$ where $\Ga$ acts on $\gO$ by the left-diagonal translation.
\end{rem}

%%%%%%%%%%%%%%%%%%%%%%%%%%%%%%%%%%%%%%%%%%%%%%%%%%%%%%%%%%%%
%%%%%%%%%%%%%%%%%%%%%%%%%%%%%%%%%%%%%%%%%%%%%%%%%%%%%%%%%%%%
\subsubsection{Filtrations and stopping times}\label{subsec:filt}
$ $\\
A filtration on a measurable space $(\gO,\mathcal{B})$ is an increasing sequence of sub-$\gs$-algebra of $\mathcal{B}$, $(\cF_n)_n$.\\
In our case the \textit{canonical filtration} is defined by $\cF_n=\gs(X_0,\dots,X_n)$ for $n\ge0$ where $(X_n)_n$ is a $\mu$-random walk.\\
Given $(\gO,\mathcal{B},(\cF_n)_n)$ a measurable map $T:\gO\rightarrow \BN\cup\{\infty\}$ is called \textit{stopping time} with respect to the filtration $(\cF_n)_n$ if $\{T=n\}\in\mathcal{F}_n$ for all $n$.

%%%%%%%%%%%%%%%%%%%%%%%%%%%%%%%%%%%%%%%%%%%%%%%%%%%
Together with this stopping time $T$ is associated the \textit{stopping time $\gs$-algebra}, $\cF_T$, defined as:
$$\cF_T=\{B\in\mathcal{B}\,|\,\text{$B\cap\{T=n\}\in\cF_n$ for all $n\ge0$}\}.$$
Assuming $T$ is almost surely finite the \textit{$\mu$-random position at time $T$} is the $\cF_T$-measurable random variable given by:
$$X_T: \gO\rightarrow\Ga,\quad X_T(\go)=
\begin{cases}
X_n(\go)&\text{if $T(\go)=n$}\\
e&\text{if $T(\go)=+\infty$}
\end{cases}$$
%%%%%%%%%%%%%%%%%%%%%%%%%%%%%%%%%%%%%%%%%%%%%%%%%%%%%%%%%%%%

%%%%%%%%%%%%%%%%%%%%%%%%%%%%%%%%%%%%%%%%%%%%%%%%%%%%%%%%%%%%
%%%%%%%%%%%%%%%%%%%%%%%%%%%%%%%%%%%%%%%%%%%%%%%%%%%%%%%%%%%%
\subsubsection{Fundamental transformations associated to random walks}\label{subsub:transf}
$ $\\
Given $g\in\Ga$ the map $\mathcal{R}_g$ defined above is an isomorphism of measured spaces with inverse $\mathcal{R}_g^{-1}((\go_n)_n)=(X_{i-1}^{-1}X_i((\go_n)_n))_{i\ge1}$.\\
The shift, $T$, on $(\Ga^{\BN^{*}},\mu^{\BN^{*}})$ given by $T((\go_i)_i)=(\go_{i+1})_i$ is a measure preserving transformation that is mixing and thus ergodic.
Moreover $T$ is intertwined by $\mathcal{R}_g$ with the Bernoulli shift $U_g$ on $(\gO,\BP_g)$ given by $U_g((\go_n)_n)=(g\go^{-1}\go_{n+1})_n$.
Since $\mathcal{R}_g$ is an $(T,U_g)$-equivariant isomorphism of measured spaces it preserves spectral properties and thus $U_g$ is also ergodic and mixing on $(\gO,\BP_g)$.\\

Another transformation associated to a Markov chains is the \textit{Markov shift}:
$$\gTh:(\gO,\BP_g)\rightarrow (\gO,\BP_g),\quad \gTh((\go_n)_n)\rightarrow (\go_{n+1})_n.$$
More generally, given a stopping time $T$ one can define on $\{T<+\infty\}\subset\gO$  the Markov shift at time $T$ by:
$$\gTh_T:\{T<+\infty\}\subset\gO\rightarrow \gO,\quad, \gTh_T(\go)=(\go_{T(\go)+n})_n$$
Note that unlike the Bernoulli shift these Markov transformations are not measure preserving.\\
Nevertheless the so called strong Markov property holds:
\begin{prop*}[Strong Markov]
Let $(\gO,\mathcal{B},(\cF_n)_n,\BP)$ be filtered space and $\vp$ a positive measurable function.
Then the following relation between random variables holds:
$$\BE(\mathbbm{1}_{\{T<+\infty\}}\vp\circ\gTh_T|\cF_T)=\mathbbm{1}_{\{T<+\infty\}}\BE_{X_T}(\vp)$$
\end{prop*}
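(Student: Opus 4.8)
The plan is to prove the identity by checking it against arbitrary test sets from the stopping-time $\sigma$-algebra $\cF_T$, reducing everything to the ordinary Markov property on each level set $\{T=n\}$. First I would fix $B\in\cF_T$ and compute $\BE(\mathbbm{1}_B\,\mathbbm{1}_{\{T<+\infty\}}\,\vp\circ\gTh_T)$ by decomposing $\{T<+\infty\}=\bigsqcup_{n\ge 0}\{T=n\}$, so that
\begin{equation*}
\BE(\mathbbm{1}_B\,\mathbbm{1}_{\{T<+\infty\}}\,\vp\circ\gTh_T)=\sum_{n\ge 0}\BE(\mathbbm{1}_{B\cap\{T=n\}}\,\vp\circ\gTh_n).
\end{equation*}
By the definition of $\cF_T$ we have $B\cap\{T=n\}\in\cF_n=\gs(X_0,\dots,X_n)$, so on this event $\gTh_T=\gTh_n$ is a deterministic shift and the tail $(\go_{n+k})_{k\ge 0}$ is governed by the transition kernel $p_\mu$ started from the current position $X_n$. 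The key step is therefore the ordinary Markov property in the form $\BE(\vp\circ\gTh_n\mid\cF_n)=\BE_{X_n}(\vp)$, which for the concrete product model $(\gO,\BP)$ with $X_n=g_0H_1\cdots H_n$ follows from the independence of $(H_{n+1},H_{n+2},\dots)$ from $\cF_n$ together with the identity $\BP_{X_n(\go)}={X_n(\go)}_*\BP$ noted in the remark above; it suffices to verify it on cylinder functions $\vp$ and extend by a monotone class / dominated convergence argument since $\vp\ge 0$.

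Carrying this out, for each $n$ I would write $\BE(\mathbbm{1}_{B\cap\{T=n\}}\,\vp\circ\gTh_n)=\BE(\mathbbm{1}_{B\cap\{T=n\}}\,\BE_{X_n}(\vp))$, and then observe that on $\{T=n\}$ one has $X_n=X_T$, hence $\BE_{X_n}(\vp)=\BE_{X_T}(\vp)$ there. Summing over $n$ gives
\begin{equation*}
\BE(\mathbbm{1}_B\,\mathbbm{1}_{\{T<+\infty\}}\,\vp\circ\gTh_T)=\sum_{n\ge 0}\BE(\mathbbm{1}_{B\cap\{T=n\}}\,\BE_{X_T}(\vp))=\BE(\mathbbm{1}_B\,\mathbbm{1}_{\{T<+\infty\}}\,\BE_{X_T}(\vp)).
\end{equation*}
Since $g\mapsto\BE_g(\vp)$ is (trivially) measurable on the discrete group $\Ga$ and $X_T$ is $\cF_T$-measurable, the right-hand integrand $\mathbbm{1}_{\{T<+\infty\}}\BE_{X_T}(\vp)$ is $\cF_T$-measurable; as $B\in\cF_T$ was arbitrary, the defining property of conditional expectation yields the claimed equality $\BE(\mathbbm{1}_{\{T<+\infty\}}\vp\circ\gTh_T\mid\cF_T)=\mathbbm{1}_{\{T<+\infty\}}\BE_{X_T}(\vp)$.

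The only genuinely delicate point is the interchange of the shift $\gTh_T$ with $\gTh_n$ on $\{T=n\}$ and the bookkeeping that $B\cap\{T=n\}\in\cF_n$ is exactly what licenses applying the ordinary Markov property there — the rest is the standard "sum over level sets of the stopping time" argument. One should also be slightly careful that $\vp$ is only assumed positive, not integrable, so every manipulation must be justified by Tonelli/monotone convergence rather than dominated convergence, and the conditional expectations are understood in $[0,+\infty]$; this causes no difficulty but is worth a remark. If one prefers, the whole argument transports verbatim through the isomorphism $\mathcal R_g$ of Section \ref{subsub:transf} to the product model, where the Markov property is literally the independence of disjoint blocks of the i.i.d.\ sequence $(H_k)_k$.
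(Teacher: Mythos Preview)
The paper does not actually prove this proposition: it is stated in the preliminaries (Subsection~\ref{subsec:filt}) as a standard fact about Markov chains, with no proof given. Your argument is the classical textbook proof---decompose over the level sets $\{T=n\}$, use $B\cap\{T=n\}\in\cF_n$ to invoke the ordinary Markov property at each fixed time, and verify $\cF_T$-measurability of the right-hand side---and it is correct. Your remarks about using Tonelli rather than dominated convergence (since $\vp$ is only assumed nonnegative) and about the product-model justification of the one-step Markov property are appropriate and accurate.
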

%Note that this is a relation between random variables.
%\begin{prop*}
%Let $(\gO,\BP_g)$ be as above and $f$ a positive measurable function on $\gO$.
%Then:
%$$\BE_g(f\circ\gTh^n|X_0,\dots,X_n)=\BE_{X_n}(f)$$
%where $\BE_g$ is the expectation associated to the probability $\BP_g$.
%\end{prop*}

%%%%%%%%%%%%%%%%%%%%%%%%%%%%%%%%%%%%%%%%%%%%%%%%%%%%%%%%%%%%
%%%%%%%%%%%%%%%%%%%%%%%%%%%%%%%%%%%%%%%%%%%%%%%%%%%%%%%%%%%%
\subsubsection{Random walks and metric structures on hyperbolic groups}
$ $\\
%We restrict our discussion on the class of hyperbolic groups.\\
A discrete group $\Ga$ is called Gromov hyperbolic if it acts geometrically on a proper roughly geodesic $\gd$-hyperbolic space $(X,d)$.\\
Let $\cD(\Ga)$ be the collection of left-invariant pseudo-metrics on $\Ga$ which are quasi-isometric to a locally finite word distance.\\
Since $\Ga$ acts geometrically on $(X,d)$, the Milnor lemma guarantees that for any locally finite word distance, $d_\Ga$, on $\Ga$ and $w\in X$ the map:
$$\vp:(\Ga,d_\Ga)\rightarrow (O(w),d_X|_{O(w)})\subset (X,d_X),\quad g\mapsto g.w$$
is a $\Ga$-equivariant quasi-isometry. 
In particular the distance $d_X|_{O(w)}$ on $O(w)\simeq_{\text{q.i}}\Ga$ is hyperbolic and belongs to $\cD(\Ga)$.
Therefore we will assume in the rest that $X=\Ga$ endowed with a distance $d\in\mathcal{D}(\Ga)$ upon which $\Ga$ acts by left-translations.\\
In order to make a distinction between $\Ga$ as a group acting on a space and $\Ga$ as metric space we denote the last one $(X,d)$ and fixe the basepoint $o=e\in X$.\\

Let us introduce an important example of strong hyperbolic distances on $\Ga=X$ which arise from random walks called \textit{Green distance}.\\
%Assume $\Ga$ is non-elementary, i.e. non virtually cyclic.
Let $\mu\in\tprob(\Ga)$ be a admissible symmetric probability measure on $\Ga$ and let us consider the stopping time:
$$\tau_g:\gO\rightarrow \BN\cup\{\infty\},\quad \tau_g((\go)_n)=\inf\{n|\go_n=g\}.$$
The probability that a $\mu$-random walk starting at $g$ ever hits $g'$ is given by:
$$F_\mu(g,g')=\BP_g(\tau_{g'}<+\infty)$$
and the associated Green distance is defined as $d_\mu(g,g')=-\log F_\mu(g,g')$.\\ 
It is enough for $\Ga$ to be finitely generated non-amenable to guarantee that $d_\mu$ is a left-invariant distance quasi-isometric to locally finite word distances \cite{MR2278460}. 

Nevertheless the rough geodesic structure which is needed to deduce the hyperbolicity uses the Ancona criterion \cite{MR1100282} which holds when $\mu$ is finitely supported and $\Ga$ is hyperbolic:
\begin{thm*}[Ancona \cite{MR1100282}]
Given a non-elementary hyperbolic group $\Ga$, the Green distance induced by a admissible finitely supported symmetric probability on $\Ga$ is hyperbolic.
\end{thm*}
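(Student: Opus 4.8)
The plan is to establish the so-called \emph{Ancona inequalities} for the Green function and then deduce both the rough-geodesic structure and hyperbolicity of $d_\mu$ from them. Recall that, writing $G_\mu(g,g') = \sum_{n\ge 0}\mu^{*n}(g^{-1}g')$ for the Green function, one has $F_\mu(g,g') = G_\mu(g,g')/G_\mu(e,e)$, so $d_\mu(g,g') = -\log G_\mu(g,g') + \log G_\mu(e,e)$ and it suffices to work with $-\log G_\mu$ up to an additive constant. The central estimate to prove is: there exists a constant $C\ge 1$ such that whenever $x,y,z\in\Ga$ and $y$ lies within bounded Hausdorff distance of a geodesic (for a fixed word metric) from $x$ to $z$, one has
\[
C^{-1}\, G_\mu(x,y)\,G_\mu(y,z)\le G_\mu(x,z)\le C\, G_\mu(x,y)\,G_\mu(y,z).
\]
The left inequality is trivial (a path from $x$ to $z$ can be forced through $y$ at sub-exponential cost, using admissibility and finite support to move between nearby points). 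The right inequality — that a random walk from $x$ to $z$ must pass near $y$ — is the heart of Ancona's argument.

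The key steps, in order, are as follows. First I would set up the word metric $d_\Ga$ on $\Ga$ coming from the finite generating set containing $\text{supp}(\mu)$, fix a hyperbolicity constant $\delta$ for $(\Ga,d_\Ga)$, and record the spectral-radius bound: since $\Ga$ is non-amenable, $\rho(\mu)<1$, hence $G_\mu(g,g')\le C' r^{d_\Ga(g,g')}$ for some $r<1$ — this exponential decay is what makes $-\log G_\mu$ comparable to $d_\Ga$ and is already enough (by \cite{MR2278460}) to know $d_\mu\in\cD(\Ga)$. Second, I would prove the Ancona inequality by the standard barrier/separation argument: given $x,z$ and a point $y$ near a geodesic, one exhibits for each large $R$ a "wall" $W_R$ — a set separating $x$ from $z$ in the Cayley graph, of bounded width and contained in the $R$-neighborhood of $y$ — and shows that the walk, before reaching $z$, must cross $W_R$; summing the first-passage decomposition over crossing points and iterating over a sequence of nested walls $W_{R_0}, W_{2R_0},\dots$ gives a geometric gain $\theta^k$ which, combined with finite support (so that each wall is genuinely crossed at a bounded-degree vertex) and the exponential Green decay, yields the uniform constant $C$. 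Third, from the Ancona inequality I would deduce that the function $(g,g')\mapsto -\log G_\mu(g,g')$ satisfies the four-point $\delta'$-inequality: translating the multiplicative Ancona inequality additively gives $d_\mu(x,z)\approx d_\mu(x,y)+d_\mu(y,z)$ along geodesics, which is precisely the statement that geodesics of $d_\Ga$ are \emph{rough geodesics} for $d_\mu$, and a metric that admits rough geodesics and is quasi-isometric to a hyperbolic space is itself hyperbolic (using that hyperbolicity transfers under quasi-isometry between roughly geodesic spaces, as noted after the definition of roughly geodesic in the excerpt, citing \cite{Bonk2011}). Concretely: the $d_\mu$-Gromov product $2(x|z)_e^{d_\mu}$ is, up to bounded error, the $d_\mu$-distance from $e$ to a geodesic $[x,z]$; the thin-triangle condition for $d_\Ga$ then forces the thin-triangle condition for $d_\mu$.

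The main obstacle is the right-hand Ancona inequality — establishing that the walk conditioned to travel from $x$ to $z$ is overwhelmingly likely to pass within bounded distance of any intermediate near-geodesic point $y$, \emph{uniformly} in $x,y,z$. The delicate points there are: (i) constructing the separating walls in the hyperbolic Cayley graph so that they are simultaneously of uniformly bounded diameter (hence crossed at one of boundedly many group elements) and truly separating (this uses hyperbolicity: near-geodesic points have small "local cut" neighborhoods); (ii) controlling the first-passage generating functions across successive walls so the per-wall contraction factor $\theta<1$ does not degrade; and (iii) ensuring the iteration over $\asymp R_0^{-1} d_\Ga(x,z)$ walls produces a bound that is still multiplicative in the Green functions rather than accumulating an unbounded multiplicative error. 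I will lean on finite support of $\mu$ throughout — it is what keeps all the "bounded width" and "bounded degree" quantities genuinely bounded — and on $\rho(\mu)<1$ to close the induction. I will cite \cite{MR1100282} for the detailed verification of the wall construction and the iteration, presenting here only the reduction from the Ancona inequality to hyperbolicity in full.
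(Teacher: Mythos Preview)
The paper does not prove this statement at all: it is quoted as a black-box result from the literature, attributed to Ancona \cite{MR1100282}, and used only to justify that the Green distance lies in the class $\cD(\Ga)$ of hyperbolic left-invariant metrics. There is therefore no ``paper's own proof'' to compare your proposal against.

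That said, your outline is the standard and correct route to Ancona's theorem: prove the multiplicative Ancona inequality $G_\mu(x,z)\asymp G_\mu(x,y)G_\mu(y,z)$ for $y$ near a word-geodesic from $x$ to $z$, deduce that word-geodesics are rough $d_\mu$-geodesics, and then invoke the quasi-isometry invariance of hyperbolicity among roughly geodesic spaces. Your identification of the hard step (the upper Ancona inequality via iterated barriers) and the ingredients needed (spectral gap $\rho(\mu)<1$, finite support to control wall widths, hyperbolic thinness to build the separating sets) is accurate. One small caution: your point (iii) about the iteration not accumulating unbounded multiplicative error is exactly where the argument is subtle, and a clean treatment usually runs the induction on a super-exponential scale or uses a relative Ancona inequality at each step rather than a naive telescoping---this is the content you are deferring to \cite{MR1100282}, so be aware that ``iterating over nested walls'' by itself does not quite close without that refinement.
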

The strong hyperbolic property of such metrics was established in \cite{MR3551185}:
\begin{thm*}[Bogdan, Spakula \cite{MR3551185}]
Given a non-elementary hyperbolic group $\Ga$, the Green distance induced by a admissible finitely supported symmetric probability on $\Ga$ is strongly hyperbolic.
\end{thm*}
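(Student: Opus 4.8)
The plan is, following \cite{MR3551185}, to reduce strong hyperbolicity of $d_\mu$ to a sharp four-point inequality for the first-passage probabilities, and then to feed in the strong Ancona inequalities for the Green function. Write $F=F_\mu$, so that $d_\mu(g,h)=-\log F(g,h)$, that $F$ is symmetric by symmetry of $\mu$, and that $F$ is left-invariant; recall also that, by \cite{MR2278460, MR1100282, MR2919980}, $d_\mu$ is already a left-invariant, roughly geodesic, $\gd$-hyperbolic metric quasi-isometric to the word metric, so that only the upgrade to \emph{strong} hyperbolicity is at stake. Since $2(x|y)_z=-\log\frac{F(z,x)F(z,y)}{F(x,y)}$ (all Gromov products being taken in $d_\mu$), multiplying the defining inequality $\exp(-\e(x|y)_w)\le\exp(-\e(x|z)_w)+\exp(-\e(z|y)_w)$ through by $\exp(\tfrac{\e}{2}(d_\mu(w,x)+d_\mu(w,y)+d_\mu(w,z)))$ shows it is equivalent to
$$(F(x,y)F(z,w))^{-\e/2}\le(F(x,z)F(y,w))^{-\e/2}+(F(x,w)F(y,z))^{-\e/2},$$
that is, to $1\le Q_1^{\e/2}+Q_2^{\e/2}$ with $Q_1=\frac{F(x,y)F(z,w)}{F(x,z)F(y,w)}$ and $Q_2=\frac{F(x,y)F(z,w)}{F(x,w)F(y,z)}$. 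I would prove this for one well-chosen $\e>0$ and all quadruples $x,y,z,w\in\Ga$.

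The easy configurations are disposed of first: if $F(x,y)F(z,w)$ is the largest of the three products $F(x,y)F(z,w)$, $F(x,z)F(y,w)$, $F(x,w)F(y,z)$, then $Q_1,Q_2\ge1$ and the inequality is immediate. Otherwise I would approximate the four points $x,y,z,w$, together with the geodesics joining them, by a four-leaf tree up to additive error $O(\gd)$ -- here rough geodesicity of $d_\mu$ enters. That tree is an ``H'' with central segment of length $\ell\ge0$; the partition of $\{x,y,z,w\}$ split by the central segment has the smallest $d_\mu$-sum, while the two others agree up to $O(\gd)$, each exceeding it by $2\ell+O(\gd)$. Using that the inequality is symmetric in $Q_1$ and $Q_2$, I may assume $\{x,z\}|\{y,w\}$ is the split partition; then $Q_1\asymp e^{-2\ell}$, whereas $Q_2=e^{O(\gd)}$ is only known to be bounded above and below by constants. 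Here pure hyperbolicity stalls: the control on $Q_2$ is up to a multiplicative constant depending on $\gd$, not $1+o(1)$ -- which is precisely why strong hyperbolicity fails to be a quasi-isometry invariant and why genuine information about the walk is needed.

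That information is the strong Ancona inequality of Gou\"ezel, available because $\mu$ is finitely supported symmetric admissible and $\Ga$ is hyperbolic: there are $C\ge1$ and $\rho\in(0,1)$ with $\big|\frac{F(p_1,p_3)F(p_2,p_4)}{F(p_1,p_4)F(p_2,p_3)}-1\big|\le C\rho^{\,d(p_2,p_3)}$ whenever $p_1,p_2,p_3,p_4$ lie, in this order, along a geodesic. Reading $Q_2$ as such a cross-ratio with $(p_1,p_2,p_3,p_4)=(x,z,y,w)$ -- which in the tree above do lie in approximate geodesic order on $[x,w]$, with $d(z,y)\ge\ell$ -- this upgrades the crude estimate to $|Q_2-1|\le C\rho^{\ell}$, while still $Q_1\ge c\,e^{-2\ell}$ for a fixed $c>0$.

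The proof would conclude with a parameter chase. If $Q_2\ge1$ we are done; otherwise $Q_2=1-\eta$ with $0\le\eta\le C\rho^{\ell}$. When $\eta\le\tfrac12$ one has $Q_2^{\e/2}\ge1-\e\eta\ge1-\e C\rho^{\ell}$ and $Q_1^{\e/2}\ge c^{\e/2}e^{-\e\ell}$; choosing $\e<\log(1/\rho)$ makes $e^{-\e\ell}$ decay more slowly than $\rho^{\ell}$, and shrinking $\e$ a little further secures $c^{\e/2}e^{-\e\ell}\ge\e C\rho^{\ell}$ for every $\ell\ge0$, whence $Q_1^{\e/2}+Q_2^{\e/2}\ge1$. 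When $\eta>\tfrac12$, the bound $\eta\le C\rho^{\ell}$ confines $\ell$ to a bounded interval, so $Q_1$ and $Q_2$ are bounded below by fixed positive constants and $Q_1^{\e/2}+Q_2^{\e/2}\to2$ as $\e\to0$; thus a single $\e$ serves for all quadruples. I expect the one genuinely hard ingredient to be establishing the strong Ancona inequality in exactly this quantitative form -- error controlled uniformly by the geodesic-overlap length -- and matching that length with the tree parameter $\ell$; this is where finite support of $\mu$ is indispensable, while everything else is either in \cite{MR2919980} or the elementary chase above.
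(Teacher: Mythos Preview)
The paper does not prove this theorem at all; it is merely quoted from \cite{MR3551185} (Nica--\v{S}pakula) as a background result, with no argument supplied. There is therefore no proof in the paper to compare your proposal against.

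That said, your sketch is essentially a correct reconstruction of the Nica--\v{S}pakula line: rewrite strong hyperbolicity as the four-point cross-ratio inequality $1\le Q_1^{\e/2}+Q_2^{\e/2}$, use the tree approximation to isolate the only nontrivial configuration, and feed in the exponentially decaying (``strong'') Ancona estimate to show $|Q_2-1|\le C\rho^{\ell}$ while $Q_1\gtrsim e^{-2\ell}$, then close with the parameter chase on $\e$. One point you gloss over deserves more care: the strong Ancona inequality is usually stated for points lying in order along an actual $d_\mu$-geodesic, whereas you invoke it for the quadruple $(x,z,y,w)$ whose alignment is only \emph{approximate}, coming from the tree picture. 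To make this rigorous you must either project $z$ and $y$ onto a geodesic $[x,w]$ and control the resulting errors via ordinary Ancona, or appeal to the intrinsic formulation in \cite{MR3551185} (their ``good metric'' axiom), which is designed precisely to absorb this issue. With that gap filled, the argument goes through.
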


%%%%%%%%%%%%%%%%% AAAA MODIFIERRRRRR%%%%%%%%%%%%%%%%%%%%%%%%%%%%%
The strong hyperbolicity of these type of distances together with the Patterson-Sullivan theory guarantee the conformal properties of stationary measures associated Theorem 1.5 \cite{MR2919980}:
$$\frac{dg_*\nu}{d\nu}(\xi)=\exp(\gd_\Ga\gb_\xi(g^{-1}o))$$
for $\xi\in\dd X$ and $g\in\text{Isom}(X,d)$, where $\gd_\Ga$ is the critical exponent of $\Ga$.

%%%%%%%%%%%%%%%%%%%%%%%%%%%%%%%%%%%%%%%%%%%%%%%%%%%%%%%%%%%%%
%%%%%%%%%%%%%%%%%%%%%%%%%%%%%%%%%%%%%%%%%%%%%%%%%%%%%%%%%%%%
%%%%%%%%%%%%%%%%% AAAA MODIFIERRRRRR%%%%%%%%%%%%%%%%%%%%%%%%%%%%%
%%%%%%%%%%%%%%%%%%%%%%%%%%%%%%%%%%%%%%%%%%%%%%%%%%%%%%%%%%%%
%%%%%%%%%%%%%%%%%%%%%%%%%%%%%%%%%%%%%%%%%%%%%%%%%%%%%%%%%%%%
%%%%%%%%%%%%%%%%%%%%%%%%%%%%%%%%%%%%%%%%%%%%%%%%%%%%%%%%%%%%
%%%%%%%%%%%%%%%%%%%%%%%%%%%%%%%%%%%%%%%%%%%%%%%%%%%%%%%%%%%%
\subsubsection{Spectral gaps and random walks}\label{subsub:drift}
$ $\\
A symmetric probability measure on $\Ga$ is called non-elementary if the semigroup generated by its support in $\Ga$ is non-elementary.

Given an symmetric non-elementary probability measure, $\mu$, on $\Ga$, the spectral radius, $\rho(\mu)$, is defined as:
$$\rho(\mu)=\limsup_n\mu^{*n}(e)^{\frac{1}{n}}.$$
It is related to the amenability of the subgroup generated its support by the following theorem:
\begin{thm*}[Kersten \cite{MR109367}]
Let $\Ga$ be a finitely generated group and $\mu$ a symmetric probability on $\Ga$.\\
Then $\mu$ has a spectral gap, i.e. $\rho(\mu)<1$ if and only if $\mu$ is non-elementary.
\end{thm*}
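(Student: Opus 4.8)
The plan is to prove both directions separately, relating the spectral radius $\rho(\mu)$ to amenability of the subgroup $H=\langle\operatorname{supp}\mu\rangle$ generated by the support. First I would record the basic analytic fact that $\rho(\mu)=\|P_\mu\|_{B(\ell^2(\Gamma))}$, the operator norm of the Markov (convolution) operator $P_\mu\varphi = \varphi * \check\mu$ acting on $\ell^2(\Gamma)$. Symmetry of $\mu$ makes $P_\mu$ self-adjoint, so by the spectral theorem $\|P_\mu\|=\sup\operatorname{spec}(P_\mu)$ and $\mu^{*2n}(e)^{1/2n} = \langle P_\mu^{2n}\delta_e,\delta_e\rangle^{1/2n}\to \|P_\mu\|$ as $n\to\infty$, which also shows the $\limsup$ in the definition is a genuine limit. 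Since $\operatorname{supp}\mu$ actually generates $\Gamma$ here (admissibility is not assumed in the statement, but $P_\mu$ on $\ell^2(H)$ computes the same quantity), one reduces to analysing $P_\mu$ on $\ell^2(H)$.

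For the direction "$\mu$ elementary $\Rightarrow \rho(\mu)=1$": if $H$ is amenable (in particular if $H$ is virtually cyclic, hence amenable), then $H$ admits a Følner sequence $(F_k)$, and testing $P_\mu$ against the normalized indicator vectors $\mathbbm{1}_{F_k}/\sqrt{|F_k|}$ gives $\langle P_\mu \mathbbm{1}_{F_k},\mathbbm{1}_{F_k}\rangle/|F_k|\to 1$, whence $\|P_\mu\|=1$ and $\rho(\mu)=1$. (One uses that $\mu$ is a probability measure, so $P_\mu\mathbbm{1}=\mathbbm{1}$ and $\|P_\mu\|\le 1$ always.) Conversely — and this is the substantive half — if $\rho(\mu)=1$, i.e. $1\in\operatorname{spec}(P_\mu)$, one wants to conclude $H$ is amenable. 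The standard route is: from $\|P_\mu\|=1$ and self-adjointness, build an approximate eigenvector, i.e. a sequence $\xi_k\in\ell^2(H)$ of unit vectors with $\|P_\mu\xi_k-\xi_k\|\to 0$; replacing $\xi_k$ by $|\xi_k|$ (which only decreases $\|P_\mu\xi_k - \xi_k\|$ by the triangle inequality applied coordinatewise, since $\mu\ge 0$) one may assume $\xi_k\ge 0$; then $\eta_k=\xi_k^2\in\ell^1(H)$ are nonnegative unit vectors with $\|s\cdot\eta_k-\eta_k\|_1\to 0$ for every $s\in\operatorname{supp}\mu$ (by Cauchy–Schwarz: $\|s\cdot\xi_k^2-\xi_k^2\|_1\le \|s\cdot\xi_k-\xi_k\|_2\,\|s\cdot\xi_k+\xi_k\|_2 \le 2\|s\cdot\xi_k-\xi_k\|_2$, and $\|s\cdot\xi_k-\xi_k\|_2^2\le \|P_\mu\xi_k-\xi_k\|$ up to a constant depending on $\mu(s)$). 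This exhibits an approximately invariant mean, so $H$ is amenable; being a finitely generated amenable subgroup of a hyperbolic-type group — or invoking that a non-elementary group contains a nonabelian free subgroup — forces a contradiction with $\mu$ being non-elementary, giving $\rho(\mu)<1$.

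The main obstacle I anticipate is the passage $\|P_\mu\|=1 \Rightarrow H$ amenable, specifically the step producing a genuinely $H$-invariant (not merely $\operatorname{supp}\mu$-invariant) approximate mean and then contradicting non-elementariness. The cleanest fix is to note that approximate invariance under a generating set propagates to approximate invariance under every group element (with constants depending on word length), so $\eta_k$ is an approximately invariant mean for all of $H$; then one applies the Tits-alternative–type fact that a non-elementary hyperbolic (or convergence) group is non-amenable because it contains $F_2$, yielding the contradiction. The reverse estimate $\rho(\mu)\le\|P_\mu\|$ and the identity $\mu^{*2n}(e)=\|P_\mu^n\delta_e\|_2^2$ are routine, as is the Følner-test computation; the care points are the coordinatewise triangle inequality for $|\xi_k|$ and the Cauchy–Schwarz bound converting an $\ell^2$ approximate eigenvector into an $\ell^1$ approximately invariant mean.
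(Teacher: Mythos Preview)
The paper does not supply its own proof of this statement: it is quoted as a classical result of Kesten, with a citation, and is used as a black box (for instance in the proof of Proposition~\ref{prop:hitting} and Proposition~\ref{prop:coef}). There is therefore nothing in the paper to compare your argument against.

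Your plan is the standard route to Kesten's theorem and is essentially sound. Two small points are worth tightening. First, the claim that replacing $\xi_k$ by $|\xi_k|$ ``only decreases $\|P_\mu\xi_k-\xi_k\|$'' is not literally true; what one uses instead is that $\langle P_\mu|\xi_k|,|\xi_k|\rangle\ge\langle P_\mu\xi_k,\xi_k\rangle$ (positivity of $\mu$), together with $\|P_\mu\eta-\eta\|_2^2\le 2(1-\langle P_\mu\eta,\eta\rangle)$ for unit $\eta$, to conclude $\|P_\mu|\xi_k|-|\xi_k|\|_2\to 0$. Second, from $\|P_\mu\|=1$ one only gets $\pm 1\in\spec(P_\mu)$; to be safe you should either pass to $P_\mu^2=P_{\mu*\mu}$ or to the lazy walk $\tfrac12(\delta_e+\mu)$ so that the approximate eigenvector is for the eigenvalue $+1$. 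Finally, note that the theorem as stated in the paper uses ``non-elementary'' (not virtually cyclic) rather than ``non-amenable''; your closing step invoking a free subgroup implicitly uses the ambient hyperbolic setting of the paper, which is the intended context here, but the equivalence fails for general finitely generated groups (e.g.\ $\BZ^2$).
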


%\begin{rem}
%In the preceding theorem the probability measure $\mu$ does not need to be finitely supported and the subgroup $H$ generated by its support might be infinitely generated.
%\end{rem}

Given a $\mu$-random walk starting at $e\in\Ga$, $(X_n)_n$, for all $n,m\ge0$ the following subadditive inequality holds:
$$|X_{n+m}|_o\le|X_n|_o+|X_m\circ U^n|_o$$
Using the ergodicity of the Bernoulli shift, $U$, the Kingman subadditive \cite{MR704553} theorem guarantees the convergence almost surely and in $L^1(\gO,\BP)$ of $(\frac{|X_n|_o}{n})_n$ to a constant $\ell$ called the \textit{drift of escape}, with $\ell>0$ whenever $\mu$ is non-elementary \cite{MR1743100}.\\

Let us prove the following useful estimate:
\begin{lem}
Let $\mu\in\tprob(\Ga)$ be a probability measure on a group $\Ga$.
Then for all $n$ one has:
$$\mu^{*n}(g)\le \max\{\mu^{*{n-1}}(e),\mu^{*n}(e)\}$$
for all $g\in\Ga$.
\end{lem}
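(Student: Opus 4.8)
The plan is to exploit symmetry-free monotonicity of convolution powers evaluated at the identity, using only that $\mu$ is a probability measure (no symmetry needed). The starting observation is the semigroup identity $\mu^{*n}(g) = \sum_{h \in \Ga} \mu^{*(n-1)}(h)\,\mu(h^{-1}g)$. I would like to bound this sum by a quantity of the form $\mu^{*(n-1)}(e)$ or $\mu^{*n}(e)$, and the natural tool is a Cauchy--Schwarz type step on the convolution, since $\mu^{*n}(e) = \sum_h \mu^{*a}(h)\mu^{*b}(h^{-1})$ whenever $a+b = n$.

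Concretely, first I would write $n = 2k$ or $n = 2k+1$ according to parity. In the even case $n = 2k$, the key step is that for any $g$,
$$\mu^{*2k}(g) = \sum_{h\in\Ga} \mu^{*k}(h)\,\mu^{*k}(h^{-1}g) \le \Big(\sum_h \mu^{*k}(h)^2\Big)^{1/2}\Big(\sum_h \mu^{*k}(h^{-1}g)^2\Big)^{1/2} = \sum_h \mu^{*k}(h)^2 = \mu^{*2k}(e),$$
where the last equality uses that a shift $h \mapsto h^{-1}g$ is a bijection of $\Ga$ (so the two $\ell^2$ norms agree) and then that $\mu^{*2k}(e) = \sum_h \mu^{*k}(h)\mu^{*k}(h^{-1}) = \sum_h \mu^{*k}(h)^2$ — here I do need $\mu^{*k}(h^{-1})$ to equal $\mu^{*k}(h)$, which is where one either assumes $\mu$ symmetric or, to avoid that, instead writes $\mu^{*2k}(e) = \sum_h \mu^{*k}(h)\,\check{\mu}^{*k}(h)$ with $\check\mu(g) = \mu(g^{-1})$ and applies Cauchy--Schwarz to get $\mu^{*2k}(g) \le (\sum_h \mu^{*k}(h)^2)^{1/2}(\sum_h \check\mu^{*k}(h)^2)^{1/2}$, then notes both factors equal $\mu^{*2k}(e)$ by reversing the convolution. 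This handles the even case with the bound $\mu^{*n}(e)$.

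For the odd case $n = 2k+1$, I would split as $\mu^{*(2k+1)}(g) = \sum_h \mu^{*k}(h)\,\mu^{*(k+1)}(h^{-1}g)$ and apply Cauchy--Schwarz to obtain $\mu^{*(2k+1)}(g) \le (\sum_h \mu^{*k}(h)^2)^{1/2}(\sum_h \mu^{*(k+1)}(h^{-1}g)^2)^{1/2} = (\mu^{*2k}(e))^{1/2}(\mu^{*(2k+2)}(e))^{1/2}$, again using bijectivity of the shift and reversal of convolutions to identify the $\ell^2$ norms. Then, since $t \mapsto \mu^{*2t}(e)$ is non-increasing in $t$ (which follows from the even-case argument applied with $g = e$: $\mu^{*2(k+1)}(e) = \sum_h \mu^{*(k+1)}(h)^2$ and $\mu^{*2k}(e)=\sum_h\mu^{*k}(h)^2$, and one shows the former is $\le$ the latter by writing $\mu^{*(k+1)} = \mu^{*k} * \mu$ and using that convolution with a probability measure is an $\ell^2$-contraction — itself a Cauchy--Schwarz/Jensen computation), the geometric mean is sandwiched between $\mu^{*(2k+2)}(e) = \mu^{*(n+1)}(e)$ and $\mu^{*2k}(e) = \mu^{*(n-1)}(e)$, so in particular $\mu^{*n}(g) \le \mu^{*(n-1)}(e)$. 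Combining both parities gives $\mu^{*n}(g) \le \max\{\mu^{*(n-1)}(e), \mu^{*n}(e)\}$.

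The main obstacle is purely bookkeeping: making sure every reindexing $h \mapsto h^{-1}g$ and every convolution-reversal is legitimate (all are, since these are bijections of the countable set $\Ga$ and all sums have non-negative terms, so Tonelli applies freely), and correctly tracking which of $\mu^{*(n-1)}(e)$ or $\mu^{*n}(e)$ dominates in each parity class. No hyperbolicity, no symmetry beyond the convolution-reversal trick, and no analytic subtlety is needed; the only real content is the single Cauchy--Schwarz inequality plus monotonicity of $k \mapsto \mu^{*2k}(e)$.
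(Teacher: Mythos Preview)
Your even-case argument is exactly the paper's: Cauchy--Schwarz on the splitting $\mu^{*2k}=\mu^{*k}*\mu^{*k}$ gives $\mu^{*2k}(g)\le\|\mu^{*k}\|_2^2=\mu^{*2k}(e)$ (using symmetry for the last equality). For the odd case your route is correct but more elaborate than the paper's. Having already shown $\mu^{*2k}(h)\le\mu^{*2k}(e)$ for every $h$, the paper simply writes
\[
\mu^{*(2k+1)}(g)=\sum_{h}\mu^{*2k}(gh^{-1})\,\mu(h)\le\mu^{*2k}(e)\sum_h\mu(h)=\mu^{*2k}(e),
\]
which is a one-line consequence of the even case. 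You instead split $\mu^{*(2k+1)}=\mu^{*k}*\mu^{*(k+1)}$, apply Cauchy--Schwarz again, and then invoke the monotonicity $\mu^{*(2k+2)}(e)\le\mu^{*2k}(e)$ (via the $\ell^2$-contractivity of convolution by a probability). This works, but the detour through monotonicity is unnecessary once the even case is in hand.

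One caveat: your attempt to remove the symmetry hypothesis does not go through as written. Without symmetry one has $\mu^{*2k}(e)=\sum_h\mu^{*k}(h)\,\check\mu^{*k}(h)$, and Cauchy--Schwarz gives only $\mu^{*2k}(e)\le\|\mu^{*k}\|_2\,\|\check\mu^{*k}\|_2=\|\mu^{*k}\|_2^2$, the inequality pointing the wrong way for your purpose; the identification $\|\mu^{*k}\|_2^2=\mu^{*2k}(e)$ genuinely uses $\mu=\check\mu$. The paper's proof also relies on symmetry at this step (symmetry is a standing assumption in the paper even though the lemma's statement omits it), so this is not a defect relative to the paper, but your claim that ``no symmetry beyond the convolution-reversal trick'' is needed is not justified.
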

\begin{proof}
Observe that:
\begin{align*}
\mu^{*2n}(x)&=\sum_{g\in\Ga}\mu^{*n}(g^{-1}g')\mu^{*n}(g')\\
&\le\sqrt{\sum_{g\in\Ga}\mu^{*n}(g^{-1}g')^2}\sqrt{\sum_{g\in\Ga}\mu^{*n}(g')^2}\\
&\le\sum_{g\in\Ga}\mu^{*n}(g')^2=\mu^{*2n}(e)
\end{align*}
and 
\begin{align*}
\mu^{*2n+1}(x)&=\sum_{g\in\Ga}\mu^{*2n}(g^{-1}g')\mu(g')\\
&\le\sum_{g\in\Ga}\mu^{*2n}(e)\mu(g')=\mu^{2n}(e)\\
\end{align*}
which imply the lemma.
\end{proof}

%Remark that if $\Ga$ is assumed to be non-amenable the Tchebychev inequality gives:
%$$\BP(|\frac{|X_n|_o}{n}-\ell|\ge\frac{\ell}{2})\le\frac{2}{\ell}\|\frac{|X_n|_o}{n}-\ell\|_{L^1(\gO,\BP_e)}$$
%and thus 
%$$\sum_{g\in B(o,\half\ell n)}\mu^{*n}(g)=\BP(|X_n|_o\le\half\ell n)\rightarrow 0.$$
%This fact is used latter in the proof of Thereom \ref{}.
%In other words the sequence of probabilities $(\mu^{*n})_n$ has accumulation at infinity.

%%%%%%%%%%%%%%%%%%%%%%%%%%%%%%%%%%%%%%%%%%%%%%%%%%%%%%%%%%%%
%%%%%%%%%%%%%%%%%%%%%%%%%%%%%%%%%%%%%%%%%%%%%%%%%%%%%%%%%%%%
\subsubsection{Random walks on Hyperbolic groups}
$ $\\
%A hyperbolic group $\Ga$ acting geometrically on $(X,d)$ is called \textit{non-elementary} if $|\dd X|>2$, in particular it is non-amenable.\\
In addition to recalling basics about the asymptotic behavior of random walks over hyperbolic spaces we insist on ideas present in Theorem \ref{thm:equi} proof.
Assume $\Ga$ is a non-elementary group that acts properly by isometries on $(X,d)$ and $\mu\in\tprob(\Ga)$ an symmetric non-elementary probability on $\Ga$.
%It follows that $\mu$ has a 1-th moment, i.e.:
%$$\sum_{g\in\Ga}\mu(g)|g|_o<+\infty.$$
%Moreover $\mu$ is assumed to be symmetric, i.e. $\mu(g)=\mu(g^{-1})$ for all $g\in \Ga$.

\begin{prop}\label{lem:asconv}
For $\BP$-almost every $\go\in\gO$ the sequence $(X_n(\go).o)_n$ in $(X,d)$ converges to $Z(\go)\in\dd X$.
\end{prop}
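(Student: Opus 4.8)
The plan is to show that $\BP$-almost surely the sequence $(X_n.o)_n$ "goes to infinity" in the Gromov sense, i.e.\ $\liminf_{n,m}(X_n.o|X_m.o)_o=+\infty$, so that it defines a point $Z(\go)\in\dd X$. The key quantitative input is the positivity of the drift $\ell>0$ established above via Kingman's subadditive ergodic theorem applied to $|X_n|_o$ together with the ergodicity of the Bernoulli shift $U$, which holds here because $\mu$ is symmetric non-elementary (Kesten) so $\Ga$ is non-amenable and the walk is transient.

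First I would record the estimate
$$2(X_n.o|X_m.o)_o=|X_n|_o+|X_m|_o-d(X_n.o,X_m.o)$$
and, for $n\le m$, use left-invariance of $d$ to write $d(X_n.o,X_m.o)=|X_n^{-1}X_m|_o=|X_m\circ U^n|_o$, the displacement of the shifted walk. So
$$2(X_n.o|X_m.o)_o=|X_n|_o+|X_m|_o-|X_{m-n}\circ U^n|_o.$$
By the drift statement applied to the walk and to its shift (whose increments are again i.i.d.\ $\mu$), $\frac{1}{n}|X_n|_o\to\ell$, $\frac{1}{m}|X_m|_o\to\ell$, and $\frac{1}{m-n}|X_{m-n}\circ U^n|_o\to\ell$ almost surely. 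Hence along $n,m\to\infty$ with, say, $n/m\to t\in[0,1]$ one gets $\frac{1}{m}(X_n.o|X_m.o)_o\to \tfrac12(t\ell+\ell-(1-t)\ell)=t\ell\ge 0$, which already forces $(X_n.o|X_m.o)_o\to+\infty$ whenever $t>0$; the only potentially dangerous regime is $n$ fixed (or $n/m\to 0$) and $m\to\infty$. To handle that uniformly I would instead invoke the standard consequence of positive drift together with the "linear divergence" lemma for i.i.d.\ increments: almost surely there exists a (random) constant and a sublinear error so that $d(X_n.o,X_m.o)\ge |X_n|_o+|X_m|_o-2\min(|X_n|_o,|X_m|_o)+o(\min(n,m))$ fails to drop the Gromov product, i.e.\ one shows directly that $\inf_{m\ge n}(X_n.o|X_m.o)_o\to+\infty$ as $n\to\infty$ by a Borel--Cantelli argument on the events $\{(X_n.o|X_m.o)_o< \tfrac{\ell}{2}n\text{ for some }m\ge n\}$, using that deviations of $|X_k|_o$ from $\ell k$ are summably rare (first moment plus non-amenability give exponential, or at least summable polynomial, tail bounds; even just the a.s.\ convergence $\frac1k|X_k|_o\to\ell$ suffices after passing to a subsequence and using monotonicity of the Gromov product in a nested-sequence argument).

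Concretely the cleanest route: fix $\go$ in the full-measure set where $\frac1k|X_k\circ U^j|_o\to\ell$ for all $j$ simultaneously (a countable intersection of full-measure sets). Given $\e>0$ pick $N$ with $|X_k|_o\ge(\ell-\e)k$ and $|X_k\circ U^j|_o\le(\ell+\e)k$ for all $k\ge N$ and the relevant $j$. Then for $N\le n\le m$,
$$2(X_n.o|X_m.o)_o\ge(\ell-\e)n+(\ell-\e)m-(\ell+\e)(m-n)=2(\ell-\e)n-2\e(m-n),$$
which is not yet bounded below uniformly in $m$; so one refines by noting the escape also gives a matching \emph{lower} bound $|X_k\circ U^j|_o\ge(\ell-\e)k$ and the triangle-type comparison only needs the shifted walk's displacement to be at most $|X_m|_o-|X_n|_o+(\text{small})$, which is exactly the content of $(X_n.o|X_{m}.o)_o$ being eventually increasing in $m$ up to bounded error — this is where hyperbolicity of $(X,d)$ enters, via $(X_n.o|X_m.o)_o\ge\min\{(X_n.o|X_k.o)_o,(X_k.o|X_m.o)_o\}-\gd$. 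Iterating this along a rapidly growing subsequence $n_1<n_2<\dots$ on which $(X_{n_i}.o|X_{n_{i+1}}.o)_o\ge i$ (achievable since for fixed $n_i$ and $m$ large the product grows like $t\ell m\to\infty$), one concludes $\liminf_{n,m}(X_n.o|X_m.o)_o=+\infty$, hence the sequence defines a boundary point $Z(\go)$.

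The main obstacle I anticipate is precisely the uniformity issue just described: a.s.\ convergence of $\frac1n|X_n|_o$ is not by itself enough to control $\inf_{m\ge n}(X_n.o|X_m.o)_o$, and one must combine the positive-drift bound with the $\gd$-hyperbolic inequality (the four-point condition extended to $\ol X$) to upgrade "linear escape along subsequences" to genuine convergence to infinity of the full double sequence. A secondary, softer point is checking that the shifted walks $X_{\bullet}\circ U^n$ have the same drift $\ell$ — this is immediate since $U$ is measure preserving, but it must be invoked on a single full-measure set for all $n$ at once. Once the sequence goes to infinity, $Z(\go)=\lim_n X_n(\go).o\in\dd X$ is well defined by the construction of the Gromov boundary recalled above, and measurability of $Z$ follows from that of each $X_n$ together with continuity of the boundary map on convergent sequences.
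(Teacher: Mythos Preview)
Your proposal has a genuine gap at exactly the point you yourself flag. The inequality you obtain,
\[
2(X_n.o|X_m.o)_o\ge 2(\ell-\e)n-2\e(m-n),
\]
is useless once $m\gg n$, and your attempted repairs do not close the hole. Picking a single $N$ that works ``for all $k\ge N$ and the relevant $j$'' is not possible: for each shift $U^j$ the a.s.\ convergence $\tfrac1k|X_k\circ U^j|_o\to\ell$ comes with its own random threshold $N_j(\go)$, and nothing prevents $N_j\to\infty$ as $j\to\infty$. The subsequence-plus-hyperbolicity idea also fails as written: iterating the $\gd$-inequality along a chain of length $k$ costs $k\gd$, so to pass from a rapidly growing subsequence $(n_i)$ to an arbitrary pair $(n,m)$ you would accumulate an unbounded error, and you give no mechanism to control what the walk does \emph{between} the $n_i$'s. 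The Borel--Cantelli suggestion is likewise only a gesture: a first moment assumption alone gives no summable tail bound on the events $\{(X_n.o|X_m.o)_o<\tfrac{\ell}{2}n\text{ for some }m\ge n\}$.

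The paper's proof sidesteps all of this by looking only at \emph{consecutive} Gromov products. Since $d(X_n.o,X_{n+1}.o)=|H_{n+1}|_o$ is a single increment, the law of large numbers (applied to the partial sums $\tfrac1n\sum_{k\le n}|H_k|_o$) gives $\tfrac1n|H_{n+1}|_o\to 0$ a.s., hence
\[
\tfrac1n(X_n.o|X_{n+1}.o)_o=\tfrac{1}{2n}\bigl(|X_n|_o+|X_{n+1}|_o-|H_{n+1}|_o\bigr)\to\ell.
\]
Then one uses the visual metric $\gth_{\e,o}$ rather than the Gromov product directly: since $\gth_{\e,o}$ is an honest metric and $\gth_{\e,o}\le\rho_{\e,o}$,
\[
\gth_{\e,o}(X_m.o,X_n.o)\le\sum_{k=m}^{n-1}\rho_{\e,o}(X_k.o,X_{k+1}.o)\le\sum_{k\ge m}e^{-\half\e\ell k},
\]
which is a tail of a convergent geometric series. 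This is the missing idea in your argument: the triangle inequality for $\gth_{\e,o}$ (equivalently, the chain construction) absorbs the iterated $\gd$-errors that you could not control, and reduces the problem to a single, uniform estimate on consecutive terms.
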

\begin{proof}
It is enough to prove that for $\BP$-almost every $\go\in\gO$, $(X_n(\go).o)_n$ is a $\gth_{\e,o}$-Cauchy sequence.\\
Let us consider the random variables $Y_n=d(X_{n-1}.o,X_n.o)=|H_n|_o$ for $n\ge1$.
These are real independent variables with distribution  $\gs\in\tprob(\BR_+)$ given by cumulative function $f_\gs(t)=\gs((-\infty,t])=\BP(|X_1|_o\le t)$ for $t\in \BR$.\\
Since $\mu$ has a first moment observe that:
\begin{align*}
\BE(Y_n)&=\int_{\BR_+}t\,d\gs(t)\\
&=\int_{\BR_+}\gs(t\ge a)\,d\gl(a)=\int_{\BR_+}\BP(|H_1|_o\ge a)\,d\gl(a)\\
&=\int_{\gO}|H_1|_o(\go)\,\BP(\go)=\sum_{g\in\Ga}\mu(g)|g.o|_o
\end{align*}
is finite and non-zero because $\mu$ is admissible.\\
The law of large numbers implies that for $\BP$-almost all $\go\in\gO$, $S_n(\go)=\frac{1}{n}\sum_{k=1}^nY_k(\go)\rightarrow \BE(|X_1|_o)$ and therefore $\frac{1}{n}Y_n\rightarrow 0$ $\BP$-almost surely.\\
On the other hand, since $\mu$ is non-elementary, $\frac{1}{n}|X_n|_o$ converges $\BP$-almost surely to $\ell>0$  and thus $\lim\frac{1}{n}(X_n|X_{n+1})_o(\go)=\ell$.
It follows for all $m\le n$ large enough:
\begin{align*}
\gth_{\e,o}(X_m(\go),X_n(\go))&\le\sum_{k=m}^\infty\rho_{\e,o}(X_k(\go),X_{k+1}(\go))\\
&\le\sum_{k=m}^\infty e^{-\half\ell k}\rightarrow 0, \quad \text{when $n,m$ go to infinity}
\end{align*}
for $\BP$-almost all $\go\in\gO$.
\end{proof}

Let us investigate the properties of the hitting random variable $Z$ and make some observations that will be useful in Theorem \ref{thm:equi} proof.
%Let $\alpha:\Ga\times K\rightarrow K$ be a continuous action of $\Ga$ on a compact metric space $(K,d')$ and an arbitrary measure $\mu$.\\
%A measure $m$ on $K$ is called \textit{stationary} if $\mu*_\alpha m:=\alpha_*(\mu\otimes m)$ is equal to $m$.\\
%The \textit{Markov operator} associated to $\mu$ is the positive contraction $P_\mu$ on $(C^0(K),\|.\|_\infty)$ given by:
%$$P_\mu\vp(x)=\sum_{g\in\Ga}\mu(g)\vp(g^{-1}x)$$
%The adjoint Markov operator, $P_\mu^*$, on space $\tprob(K)$ fixes any $\mu$-stationary measure on $K$.
%If $\nu$ is a stationary measure on $K$, $P_\mu$ is called $\nu$-ergodic if every $P_\mu$ invariant function is $\nu$-almost surely constant.
%It is uniquely ergodic if moreover it has an unique stationary probability measure.\\
%Note that the uniqueness of the stationary measure implies ergodicity \cite{MR3560700}.

\begin{rem}\label{rem:invmarkov}
For all $k\ge 0$ fixed, $\gTh^k$ preserves essentially the set upon which $(X_n)_n$ converges to $Z$ and as a consequence $X_n\circ\gTh^k(\go)\rightarrow Z\circ\gTh^k(\go)$.
On the other hand
$$(X_n\circ\gTh^k(\go)|X_n(\go))_o\ge\half(|X_n|_o(\go)-|H_{n+1}\dots H_{n+k}|_o(\go))$$
for all $\go\in\gO$ and
$$\frac{1}{n}|H_{n+1}\dots H_{n+k}|_o(\go)\le\frac{1}{n}\sum_{i=1}^k|H_{n+i}|_o(\go)$$
converges to 0 when $n$ goes to infinity and $k$ is fixed $\BP$-almost surely by the law of large numbers.
It follows that $(X_n\circ\gTh^k)_n$ and $(X_n)_n$ converge to the same limit:
$$Z\circ\gTh^k(\go)=Z(\go)$$
for $\BP$-almost all $\go\in\gO$. In other words the random variable $Z$ is Markov shift invariant.

Moreover for all $\go\in\gO$ one has $X_n\circ\gTh^k(\go)=X_{n+k}(\go)=X_k(\go).X'_{n}(\go)$ where $X'_n=X_k^{-1}X_{n+k}$ is a $\mu$-distributed random walk starting at $e$ independent of $X_{k'}$ for all $k'\le k$.\\
Therefore $\BP$-almost surely:
$$Z(\go)=\lim_n X_n\circ\gTh^k(\go)=X_k(\go).\lim_{n'} X_n'(\go)=X_k(\go).Z_k(\go)$$
where $Z_k$ is independent of $X_{k'}$ for all $k'\le k$.
If $U$ denotes the Bernoulli shift at $e$ on $\gO$ for $k\ge0$ one has: 
$$X_n\circ U^k=X_k^{-1}X_n\circ\gTh^k\rightarrow Z_k$$
and since $U$ is $\BP$-measure preserving one deduce that the sequence of random variables: $(Z_k)_{k\ge0}$ have the same law.

As a consequence if $\nu$ is the law of $Z$ on $\dd X$ the above relation implies:%%%lapage
\begin{align*}
\nu(A)=\BP(Z\in A)&=\BP(X_kZ_k\in A)\\
&=\int_\gO\mathbbm{1}_A(X_kZ_k)(\go)\,\BP(\go)\\
&=\sum_g\mu^{*k}(g)\int_\gO\mathbbm{1}_A(gZ_k(\go))\,\BP(\go)\\
&=\sum_g\mu^{*k}(g)\nu(g^{-1}A)
\end{align*}
for all measurable set $A\subset \dd X$.
In other words $\nu$ is $\mu$-stationary.
\end{rem}

\begin{lem}\label{lem:proximal}
Let $\nu$ be a diffuse probability on $\dd X$ and $(g_n)_n$ a proper sequence of elements of $\Ga$.
Assume ${g_n}_*\nu$ converges to $\nu'$, then $\nu'$ is a Dirac mass.
Moreover if the limit probability satisfies $\nu'=\gd_\xi$ then $\lim_n g_n.o=\xi$.
\end{lem}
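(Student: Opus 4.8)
The plan is to exploit the $2\gd$-ultrametric-type inequality for the extended Gromov product together with the hypothesis that $\nu$ is diffuse, i.e. gives no mass to points of $\dd X$. First I would show that $\nu'$ is carried by a single point. Suppose for contradiction $\nu'$ has two distinct points $\xi_1\neq\xi_2$ in its support; pick small disjoint balls $B_1, B_2$ around them and note $\nu'(B_i)>0$. The key geometric input is that for a proper sequence $(g_n)_n$ in $\Ga$, after passing to a subsequence we may assume $g_n.o\to\zeta\in\dd X$ and $g_n^{-1}.o\to\zeta^-\in\dd X$ (compactness of $\ol X$); then for any $\eta\in\dd X$ with $\eta\neq\zeta^-$, one has $g_n\eta\to\zeta$. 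Concretely, $(g_n\eta|g_n\eta')_o$ and the estimate $(g_n\eta|\zeta)_o\gtrsim\min\{(g_n\eta|g_n.o)_o,(g_n.o|\zeta)_o\}$ combined with $(g_n\eta|g_n.o)_o=(\eta|g_n^{-1}.o)_{g_n^{-1}o}\approx|g_n|_o - (\eta|g_n^{-1}.o)_o\to\infty$ as long as $(\eta|\zeta^-)_o$ stays bounded, forces $g_n\eta\to\zeta$.

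The main step is then a measure-theoretic pushforward argument. The set $E=\{\eta\in\dd X : (\eta|\zeta^-)_o=\infty\}=\{\zeta^-\}$ has $\nu$-measure zero since $\nu$ is diffuse. Hence $\nu(\dd X\setminus\{\zeta^-\})=1$, and on this full-measure set $g_n\eta\to\zeta$ pointwise. By dominated convergence (the functions $\eta\mapsto\vp(g_n\eta)$ are uniformly bounded by $\|\vp\|_\infty$), for every $\vp\in C^0(\dd X)$ we get
$$\int_{\dd X}\vp\,d({g_n}_*\nu)=\int_{\dd X}\vp(g_n\eta)\,d\nu(\eta)\xrightarrow{n\to\infty}\vp(\zeta),$$
so ${g_n}_*\nu\to\gd_\zeta$ weakly. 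By uniqueness of weak limits, $\nu'=\gd_\zeta$, which is a Dirac mass; this contradicts the assumption that $\xi_1,\xi_2$ both lie in the support of $\nu'$. Therefore $\nu'$ is a Dirac mass $\gd_\xi$, and the computation just performed identifies $\xi=\zeta=\lim_n g_n.o$ along the chosen subsequence. A standard subsequence argument (every subsequence of $(g_n.o)_n$ has a further subsequence converging to the unique $\xi$ with $\gd_\xi=\nu'$) upgrades this to $\lim_n g_n.o=\xi$ for the full sequence.

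The step I expect to be the main obstacle is making the convergence $g_n\eta\to\zeta$ rigorous and uniform enough to justify the pushforward limit: one must control $(\eta|g_n^{-1}.o)_o$ uniformly over $\eta$ in a set of full $\nu$-measure, and handle the possibility that $g_n^{-1}.o$ does not converge by extracting subsequences. The cleanest route is to prove the pointwise statement ``if $(\eta|g_n^{-1}.o)_o$ is bounded then $g_n\eta\to\zeta$'' using only properties (1)--(3) of the extended Gromov product and the identity $(g\xi|g\eta)_{go}=(\xi|\eta)_o$ together with $(g\xi|g.o)_o\approx |g|_o-(g^{-1}.o|\xi)_o$ (which follows from the cocycle relation for the Gromov product under isometries), then invoke diffuseness of $\nu$ to discard the single bad point $\zeta^-$ and conclude by dominated convergence. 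The rest is bookkeeping with weak-$*$ limits.
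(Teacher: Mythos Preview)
Your proposal is correct and follows essentially the same approach as the paper: extract a subsequence so that $g_n.o\to\xi_+$ and $g_n^{-1}.o\to\xi_-$, use the identity $(g_n\eta|g_n.o)_o\approx|g_n|_o-(\eta|g_n^{-1}.o)_o$ to show $g_n\eta\to\xi_+$ for every $\eta\neq\xi_-$, then invoke diffuseness of $\nu$ and dominated convergence to conclude ${g_n}_*\nu\to\gd_{\xi_+}$, with a subsequence argument for the converse. The initial contradiction setup with $\xi_1,\xi_2$ is superfluous---you never actually use it, since the direct argument already yields $\nu'=\gd_{\xi_+}$---but the substance of the proof is the same as the paper's.
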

\begin{proof}
Since $(g_n)_n$ is proper, after extraction of a subsequence, one may assume that there exist $\xi_\pm\in\dd\Ga$ such that $g_n^{\pm1}.o\rightarrow\xi_\pm$.\\
Let $\eta\neq\xi_-$ be a boundary point distinct from $\xi_-$.
Then $(\eta|g_n^{-1}.o)_o$ is a bounded sequence and since $|g^{-1}_n|_o-(\eta|o)_{g^{-1}_n.o}\approx(\eta|g_n^{-1}.o)_o$, it follows that $(\eta|o)_{g^{-1}_n.o}=(g_n\eta|g_n.o)_o\rightarrow+\infty$. 
In other words $g_n\eta\rightarrow \xi_+$.
Because $\nu$ is diffuse for all $\vp\in C^0(\dd X)$ the dominated convergence theorem implies:
$$\int_{\dd\Ga}\vp(\eta)\,d{g_n}_*\nu=\int_{\dd\Ga\setminus\xi_-}\vp(g_n\eta)\,d\nu\rightarrow \int_{\dd\Ga\setminus\xi_-}\vp(\xi_+)\,d\nu=\vp(\xi_+)$$
Conversely if $\nu'$ is Dirac mass at $\xi$ then $\xi$ is the only possible cluster value of $(g_n)_n$ by the previous argument.
\end{proof}

\begin{prop}\label{prop:hitting}
Let $Z$ be the random limit of a $\mu$-random walk $(X_n)_n$ on $\Ga$.
The law of $Z$, $\nu$, called hitting measure on $\dd X$ given by:
$$\nu=\int_{\dd X}\gd_{Z(\go)}\BP(\go)$$
has no atoms and is the unique $\mu$-stationary probability on $\dd X$.\\
%In other words the operator $P_{\mu}$ on $C^0(\dd X)$ is uniquely ergodic.
\end{prop}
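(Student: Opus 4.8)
The plan is to establish the three assertions of Proposition \ref{prop:hitting} in the following order: first that $\nu$ is $\mu$-stationary, then that $\nu$ is non-atomic, and finally that it is the unique $\mu$-stationary probability on $\dd X$. The first point is already available: it was shown in Remark \ref{rem:invmarkov} (via the decomposition $Z = X_k \cdot Z_k$ with $Z_k$ independent of $X_{k'}$, $k'\le k$, and having the same law as $Z$) that $\nu(A) = \sum_g \mu^{*k}(g)\,\nu(g^{-1}A)$ for every measurable $A\subset\dd X$ and every $k$, which is precisely $\mu^{*k}*\nu = \nu$; taking $k=1$ gives stationarity. So the substance is in the remaining two claims.

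For non-atomicity, I would argue by contradiction. Suppose $\nu$ has an atom; let $m = \max_{\xi\in\dd X}\nu(\{\xi\}) > 0$, which is attained and finite (at most $1/m$ atoms of that size). Let $F = \{\xi : \nu(\{\xi\}) = m\}$, a nonempty finite set. Using stationarity $\nu = \sum_g \mu(g)\, g_*\nu$, for any $\xi$ with $\nu(\{\xi\}) = m$ we get $m = \sum_g \mu(g)\,\nu(\{g^{-1}\xi\}) \le \sum_g \mu(g)\, m = m$, forcing $\nu(\{g^{-1}\xi\}) = m$, i.e. $g^{-1}\xi\in F$, for every $g$ in the support of $\mu$. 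Iterating and using admissibility (the support generates $\Ga$ as a semigroup), $F$ is $\Ga$-invariant; but a non-elementary group acting on its hyperbolic boundary has no finite orbit (the limit set is infinite and minimal), contradiction. Alternatively, and perhaps more in the spirit of the surrounding text, one can invoke Lemma \ref{lem:proximal}: take $g_n\to\infty$ along a sequence realizing north–south dynamics; if $\nu$ were diffuse we would already be done, and if not, the atomic part is handled by the finite-orbit argument just sketched. I would present the clean finite-orbit version.

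For uniqueness, let $\nu'$ be any $\mu$-stationary probability on $\dd X$. The standard approach via the martingale convergence theorem applies: since $\mu$ has a first moment and is non-elementary, the random walk $(X_n.o)$ converges almost surely to $Z$, and for a stationary measure $\nu'$ the sequence $(X_n)_*\nu'$ is a bounded martingale in the space of measures (with respect to the canonical filtration $\cF_n$), hence converges almost surely weakly to some random measure $\nu'_\infty$. By Lemma \ref{lem:proximal}, applied to the $\BP$-almost surely proper sequence $(X_n(\go))_n$, the limit $\nu'_\infty(\go)$ is a Dirac mass, and moreover $\nu'_\infty(\go) = \gd_{\xi(\go)}$ forces $X_n(\go).o\to\xi(\go)$, i.e. $\xi(\go) = Z(\go)$. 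Therefore $\nu'_\infty = \gd_Z$ $\BP$-almost surely. Taking expectations and using the martingale property, for any $\vp\in C^0(\dd X)$,
$$\int_{\dd X}\vp\, d\nu' = \BE\big[\textstyle\int \vp\, d(X_0)_*\nu'\big] = \lim_n \BE\big[\textstyle\int \vp\, d(X_n)_*\nu'\big] = \BE\big[\textstyle\int \vp\, d\gd_Z\big] = \int_{\dd X}\vp\, d\nu,$$
so $\nu' = \nu$. The only point requiring care — and the main obstacle — is justifying the martingale convergence of $(X_n)_*\nu'$ and the interchange of limit and expectation; this is where one must check that $\{(X_n)_*\nu'(A)\}_n$ is genuinely a martingale for each fixed Borel $A$ (it is, by the stationarity relation and the Markov property of the walk), separability of $C^0(\dd X)$ to get a single full-measure set on which weak convergence holds, and uniform integrability (automatic here, the integrands being bounded by $\|\vp\|_\infty$). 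With Lemma \ref{lem:proximal} in hand the geometric input is essentially free, so the argument is short modulo these standard probabilistic verifications.
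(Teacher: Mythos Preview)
Your uniqueness argument via the martingale $(X_n)_*\nu'$ and Lemma \ref{lem:proximal} is exactly the paper's route. The difference lies in non-atomicity. The paper argues as follows: for any stationary $\nu$ with an atom at $\xi$, set $\vp_n(g)=\mu^{*n}(g)\nu(g^{-1}\xi)$; stationarity gives $\sum_g\vp_n(g)=\nu(\{\xi\})>0$ for all $n$, while Kesten's spectral gap $\rho(\mu)<1$ forces $\vp_n\to 0$ pointwise, and the domination $\vp_n(g)\le\nu(g^{-1}\xi)\in\ell^1(\Ga)$ yields a contradiction via dominated convergence. Your maximal-atom argument is the more classical one and is more elementary in that it avoids the spectral gap entirely, using only the dynamical fact that a non-elementary group has no finite orbit on $\dd X$; the paper's argument, by contrast, is purely analytic and does not invoke boundary dynamics at all. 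Both are short.

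Two small points to tighten. First, in the hypotheses here $\mu$ is only assumed symmetric and non-elementary, not admissible; your invariance step should read that $F$ is invariant under the (non-elementary) subgroup generated by $\mathrm{supp}\,\mu$, which suffices. Second, Lemma \ref{lem:proximal} requires the measure being pushed to be \emph{diffuse}, so in the uniqueness step you must know $\nu'$ is non-atomic, not just $\nu$; your finite-orbit argument of course applies verbatim to any stationary $\nu'$, but this should be said explicitly (the paper handles this by phrasing the non-atomicity claim for an arbitrary stationary measure from the outset).
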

\begin{proof}
Assume $\nu$ is a $\mu$-stationary probability on $\dd X$ with an atom at $\xi\in\dd X$.
Let $\vp_n\in\ell^1(\Ga)$ for $n\ge0$ be the sequence of positive functions on $\Ga$ given by  $\vp_n(g)=\mu^{*n}(g)\nu(g^{-1}\xi)$.
Since $\nu$ is $\mu$-harmonic one has:
$$\sum_{g\in\Ga}\vp_n(g)=\nu(\xi)>0$$
for all $n\ge 0$.\\
On the other hand the support of $\mu$ generates an non-amenable subgroup of $\Ga$.
This implies that $\mu^{*n}(g)\lesssim \rho(\mu)^n$ with $\rho(\mu)<1$ by Kersten theorem and therefore $(\vp_n)_n$ converges pointwise to $0$.\\
Since $\vp_n(g)\le u(g)=\nu(g^{-1}\xi)$ with $u\in\ell^1(\Ga)$, the dominated convergence theorem implies that $\sum_g\vp_n(g)\rightarrow 0$ but $\sum_g\vp_n(g)=\nu(\xi)>0$ for all $n\ge0$. This is a contradiction and $\nu$ is necessarily diffuse on $\dd X$.\\

Since $\nu$ is diffuse Lemma \ref{lem:proximal} implies that $X_n(.)_*\nu$ converges $\BP$-almost surely to $\gd_{Z(.)}$.
The martingale convergence theorem implies:
$$\int_\gO X_n(\go)_*\nu \BP(\go)\rightarrow \int_\gO \gd_{Z(\go)}\BP(\go)$$
Indeed for $\vp\in C^0(\dd X)$, $\vp_n(\go)=\int_{\dd X}\vp(X_n(\go)\xi)d\nu(\xi)$ with $\go\in\gO$ is a bounded martingale on $\gO$ and thus converges in $L^1(\gO,\BP)$.\\
One the other hand $\nu$ is $\mu$-stationary and thus:
$$\int_\gO X_n(\go)_*\nu \BP(\go)=\mu^{*n}*\nu=\nu$$
\end{proof}

%%%%%%%%%%%%%%%%%%%%%%%%%%%%%%%%%%%%%%%%%%%%%%%%Autre 
%\subsubsection{Random walks and Markov operators}
%$ $\\
%Given a continuous action of $\Ga$ on a compact metric space $(K,d')$ and a probability measure, $\mu$, on $\Ga$.
%The \textit{Markov operator} is the positive contraction $P_\mu$ on $(C^0(K),\|.\|_\infty)$ given by:
%$$P_\mu\vp(x)=\sum_{g\in\Ga}\mu(g)\vp(g^{-1}x)$$
%The $P_\mu^*$ adjoint operator on space $\tprob(K)$ fixes any $\mu$-stationary measure on $K$.
%\begin{rem}
%More generally, using the same arguments, every probability measure on $\Ga$ with spectral gap defines an uniquely ergodic Markov operator on $\dd X$.
%\end{rem}

%%%%%%%%%%%%%%%%%%%%%%%%%%%%%%%%%%%%%%%%%%%%%%%%
%%%%%%%%%%%%%%%%%%%%%%%%%%%%%%%%%%%%%%%%%%%%%%%%%%%%%%%%%%%%%%%%%%
%%%%%%%%%%%%%%%%%%%%%%%%%%%%%%%%%%%%%%%%%%%%%%%%%%%%%%%%%%%%%%%%%%
%%%%%%%%%%%%%%%%%%%%%%%%%%%%%%%%%%%%%%%%%%%%%%%%%%%%%%%%%%%%%%%%%%
%%%%%%%MU IS ASSUMED TO SYMMETRIC NON AMENABLE WITH 1ST MOMENT%%%%%%%%%%%%%%%%%%%%%%%%%%%
%\section{Statements and proof of the main results}
\section {Probabilistic Margulis-Roblin equidistribution}\label{sec:equi}
The next theorem is a probabilistic analogue of Roblin equidistribution Theorem 4.1.1 of \cite{MR2057305} (see also \cite{MR2035655}).
In this section $\mu$ denote an arbitrary symmetric non-elementary probability measure on $\Ga$ with 1st moment.
\begin{thm}\label{thm:equi}
Let $\vp_1,\vp_2\in C^0(\dd X)$ be two continuous functions on $\dd X$, $(X_n)_n$ a $\mu$-random walk starting at $o\in X$ with $\mu$ as above and $\nu$ the unique $\mu$-stationary measure on $\dd X$.\\
Then the following equidistribution holds:
$$\BE^\mu[\vp_1(\hat{X_n.o})\vp_2(\hat{X_n^{-1}.o})]=\sum\vp_1(\hat{g.o})\vp_2(\hat{g^{-1}.o})\mu^{*n}(g)\xrightarrow{n\rightarrow+\infty} \int_{\dd X}\vp_1\,d\nu.\int_{\dd X}\vp_2\,d\nu.$$
Moreover the boundary retraction might be chosen differently for $\vp_1$ and $\vp_2$.
\end{thm}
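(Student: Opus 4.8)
The plan is to prove the equidistribution by a weak-compactness argument on the sequence of probability measures
$$\lambda_n = \sum_{g\in\Ga} \mu^{*n}(g)\,\delta_{(g.o,\,g^{-1}.o)} \in \tprob(\ol X \times \ol X),$$
showing every weak-$*$ accumulation point equals $\nu\otimes\nu$. First I would establish that $(\lambda_n)_n$ has accumulation at infinity in \emph{both} coordinates, i.e. for every compact $K\subset X$ we have $\mu^{*n}(\{g : g.o\in K\})\to 0$ and likewise for $g^{-1}.o$; this follows from Lemma \ref{subsub:drift} (since $\mu^{*n}(g)\le\max\{\mu^{*(n-1)}(e),\mu^{*n}(e)\}\preceq\rho(\mu)^n$ with $\rho(\mu)<1$ by Kersten, and $K\cap\Ga.o$ is finite), or alternatively from positivity of the drift $\ell>0$. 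Consequently, by the boundary-retraction invariance lemma (the one on sequences of measures with accumulation at infinity), the pushforward $(\hat{}\times\hat{})_*\lambda_n$ — i.e. the law of $(\hat{X_n.o},\widehat{X_n^{-1}.o})$ — has the same accumulation points for any choice of boundary retractions on the two factors; this simultaneously handles the final sentence of the theorem allowing different retractions for $\vp_1$ and $\vp_2$. So it suffices to show any weak limit $\Lambda$ of a subsequence of $\lambda_n$ is supported on $\dd X\times\dd X$ and equals $\nu\otimes\nu$.

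Next I would identify the marginals of $\Lambda$. The first marginal of $\lambda_n$ is $\mu^{*n}*\delta_o$ pushed to $\ol X$; by Proposition \ref{prop:hitting} and the argument in its proof (martingale convergence: $\int_\gO X_n(\go)_*\nu'\,\BP(\go)=\nu$ for the unique stationary $\nu$, but here with $\nu'=\delta_o$ one gets $\mu^{*n}*\delta_o \to \nu$ as measures on $\dd X$ via $X_n.o\to Z$ a.s.), the first marginal converges to $\nu$, and likewise the second marginal converges to $\nu$ since $\mu$ is symmetric (the reversed walk has the same law). Hence $\Lambda$ has both marginals equal to $\nu$, and in particular $\Lambda(\dd X\times\dd X)=1$.

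The heart of the proof is showing $\Lambda=\nu\otimes\nu$, i.e. asymptotic independence of the two endpoints $\hat{X_n.o}$ and $\widehat{X_n^{-1}.o}$. Here I would exploit the decomposition from Remark \ref{rem:invmarkov}: for fixed $k$, write $X_n = X_k\cdot X'_{n-k}$ where $X'_{n-k}=X_k^{-1}X_n$ is an independent $\mu$-walk, so $X_n.o\to X_k.Z_k$ with $Z_k$ independent of $X_k$ and $Z_k\sim\nu$; symmetrically, run the reversed walk and use $X_n^{-1} = (X'_{n-k})^{-1}X_k^{-1}$, but more usefully split $X_n^{-1}$ from the \emph{other} end: $X_n = H_1\cdots H_n$, so $X_n^{-1}.o = (H_1\cdots H_n)^{-1}.o$ is, by symmetry of $\mu$, distributed as a walk whose endpoint $\widehat{X_n^{-1}.o}$ depends (in the limit) on the ``late'' increments $H_{n-k+1},\dots$ while $\hat{X_n.o}$ depends in the limit on the ``early'' increments $H_1,\dots,H_k$. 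Concretely: fix $\vp_1,\vp_2$ and $\e>0$; choose $k$ so large that $\vp_1(\hat{X_n.o})$ is within $\e$ in $L^1(\BP)$ of $\vp_1(\widehat{X_k.Z_k})$ (using a.s. convergence $X_n.o\to X_k Z_k$ and continuity/uniform continuity of $\hat{\vp_1}$ on $\ol X$ together with accumulation at infinity to control the compact part), and similarly $\vp_2(\widehat{X_n^{-1}.o})$ is within $\e$ of a function measurable w.r.t.\ the tail $\sigma$-algebra $\sigma(H_{n-k+1},\dots,H_n)$ composed with the reversal; then by independence of early and late increments,
$$\BE[\vp_1(\hat{X_n.o})\vp_2(\widehat{X_n^{-1}.o})] = \BE[\vp_1(\hat{X_n.o})]\,\BE[\vp_2(\widehat{X_n^{-1}.o})] + O(\e) \to \Big(\int\vp_1\,d\nu\Big)\Big(\int\vp_2\,d\nu\Big).$$
The main obstacle is making this ``early vs.\ late increments'' splitting rigorous \emph{uniformly in $n$}: one must show that the limit point $\hat{X_n.o}$ is genuinely well-approximated, as $n\to\infty$ with $k$ fixed, by something $\cF_k$-measurable, and that $\widehat{X_n^{-1}.o}$ is well-approximated by something $\sigma(H_{n-k+1},\dots,H_n)$-measurable, with error going to $0$ as $k\to\infty$ uniformly in $n$. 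For the first this is exactly Remark \ref{rem:invmarkov} ($X_n\circ\gTh^k$ converges to $Z$ and $Z=X_kZ_k$); for the second, apply the same remark to the reversed walk, using that $\mu=\check\mu$ so that $(H_n,H_{n-1},\dots,H_1)$ is again an i.i.d.\ $\mu$-sequence and $X_n^{-1} = H_n^{-1}\cdots H_1^{-1}$ is the corresponding walk, whose position ``seen from step $k$ at the end'' stabilizes. Assembling: the uniform (in $n$) smallness of both approximation errors, combined with the independence of $\{H_1,\dots,H_k\}$ from $\{H_{n-k+1},\dots,H_n\}$ for $n>2k$, forces $\Lambda=\nu\otimes\nu$, and since this holds along every subsequence the full sequence converges.
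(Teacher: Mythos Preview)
Your overall strategy---weak compactness on the joint laws $\lambda_n$ and asymptotic independence of the forward and backward endpoints via an early/late increment split---is a legitimate alternative to the paper's route. The paper instead extracts a weak-$*$ limit $\vp_{-\infty}$ of $\vp_2(\widehat{X_n^{-1}.o})$ in $L^\infty(\gO,\BP)$, uses Egoroff and the drift to compare $X_n\circ\gTh^k$ with $X_n$ and $X_n^{-1}\circ U^k$ with $X_n^{-1}\circ\gTh^k$, arrives at $I_+\le\int_\gO\vp_1(Z\circ U^k)\,\vp_{-\infty}\,d\BP$ for every $k$, and finishes by invoking the \emph{mixing} of the Bernoulli shift $U$. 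Your plan trades mixing for the direct independence of disjoint increment blocks, which is conceptually more elementary.

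There is, however, a genuine gap in your ``Concretely'' step. You approximate $\vp_1(\hat{X_n.o})$ by $\vp_1(\widehat{X_k.Z_k})$; but $X_kZ_k=Z$ is the almost-sure limit of $(X_n.o)_n$ and depends on \emph{all} the increments (indeed $Z_k$ is $\gs(H_{k+1},H_{k+2},\dots)$-measurable). So $\vp_1(Z)$ is \emph{not} $\cF_k$-measurable and is \emph{not} independent of the late block $\gs(H_{n-k+1},\dots,H_n)$; the factorization $\BE[\vp_1\cdot\vp_2]\approx\BE[\vp_1]\,\BE[\vp_2]$ therefore does not follow from what you wrote. The fix your own later sentence (``something $\cF_k$-measurable'') demands is to approximate $\vp_1(\hat{X_n.o})$ by $\vp_1(\hat{X_k.o})$ instead: this \emph{is} $\cF_k$-measurable, and the error is governed by $(X_n.o\,|\,X_k.o)_o=\tfrac12(|X_n|_o+|X_k|_o-|X_k^{-1}X_n|_o)$, which the drift shows is $\approx\ell k$ with high probability uniformly in $n\ge n_0(k)$ (since $X_k^{-1}X_n\overset{d}{=}X_{n-k}$). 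Symmetrically, approximate $\vp_2(\widehat{X_n^{-1}.o})$ by $\vp_2(\widehat{(H_n^{-1}\cdots H_{n-k+1}^{-1}).o})$, which is $\gs(H_{n-k+1},\dots,H_n)$-measurable and, by symmetry of $\mu$, has the law of $\vp_2(\hat{X_k.o})$. With these corrected approximants the two factors are genuinely independent for $n>2k$, the product of expectations is $\BE[\vp_1(\hat{X_k.o})]\,\BE[\vp_2(\hat{X_k.o})]$, and letting $k\to\infty$ yields $\int\vp_1\,d\nu\int\vp_2\,d\nu$.
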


%In other words if we consider:
%$$B: \Ga\rightarrow \dd X\times\dd X, \quad B(g)=(\hat{g.o},\hat{g^{-1}.o})$$
%the sequence of probabilities $(B_*\mu^{*n})_n$ on $\dd X\times \dd X$ converges weakly to $\nu\otimes{\nu}$.\\
%%%%%%%%%%%%%%%%%%%%%%%%%%%%%%%%%%%%%%%%%%%%%%%%%%%%%%%%%%%
\begin{cor}\label{cor:markov}
Given $\vp\in C^0(\dd X\times\dd X)$ a continuous function on $\dd X\times\dd X$, the following equidistribution holds:
$$\BE^\mu[\Phi(X_n\xi,X_n^{-1}\eta)]=\sum_{g\in\Ga}\Phi(g\xi,g^{-1}\eta)\mu^{*n}(g)\xrightarrow{n\rightarrow+\infty} \int_{\dd X\times\dd X}\Phi\,d\nu\otimes\nu.$$
%In particular $P_{(2)}$ is $\nu\otimes\nu$ uniquely ergodic on $\dd X\otimes \dd X$.
\end{cor}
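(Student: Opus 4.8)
The plan is to deduce Corollary \ref{cor:markov} from Theorem \ref{thm:equi} by a density argument, reducing continuous functions on $\dd X\times\dd X$ to finite sums of product functions $\vp_1\otimes\vp_2$ with $\vp_i\in C^0(\dd X)$. First I would invoke the Stone-Weierstrass theorem: the algebra generated by functions of the form $(\xi,\eta)\mapsto\vp_1(\xi)\vp_2(\eta)$ is a subalgebra of $C^0(\dd X\times\dd X)$ that separates points and contains the constants, hence is dense in the uniform norm. So given $\gF\in C^0(\dd X\times\dd X)$ and $\e>0$, pick $\gF_\e=\sum_{k=1}^N\vp_1^{(k)}\otimes\vp_2^{(k)}$ with $\|\gF-\gF_\e\|_\infty\le\e$.

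The next step is to handle the error term uniformly in $n$. Writing $A_n(\gF)=\sum_{g\in\Ga}\gF(g\xi,g^{-1}\eta)\mu^{*n}(g)$, note that $A_n$ is, for each fixed $\xi,\eta$, the integral of $\gF$ against the pushforward of $\mu^{*n}$ under $g\mapsto(g\xi,g^{-1}\eta)$, which is a probability measure on $\dd X\times\dd X$; hence $|A_n(\gF)-A_n(\gF_\e)|\le\|\gF-\gF_\e\|_\infty\le\e$ for all $n$. Likewise $|\int\gF\,d\nu\otimes\nu-\int\gF_\e\,d\nu\otimes\nu|\le\e$. By linearity, $A_n(\gF_\e)=\sum_{k=1}^N A_n(\vp_1^{(k)}\otimes\vp_2^{(k)})$, and by Theorem \ref{thm:equi} (applied $N$ times, with the linear boundary retraction, noting that $\hat\xi=\xi$ and $\hat\eta=\eta$ when the arguments already lie on $\dd X$, so no retraction is actually needed here since $g\xi,g^{-1}\eta\in\dd X$), each term converges to $\int\vp_1^{(k)}d\nu\int\vp_2^{(k)}d\nu$ as $n\to\infty$. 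Hence $A_n(\gF_\e)\to\sum_k\int\vp_1^{(k)}d\nu\int\vp_2^{(k)}d\nu=\int\gF_\e\,d\nu\otimes\nu$.

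Combining the three estimates via the triangle inequality,
\[
\limsup_{n\to\infty}\Big|A_n(\gF)-\int_{\dd X\times\dd X}\gF\,d\nu\otimes\nu\Big|\le 2\e,
\]
and since $\e>0$ was arbitrary, the left side is zero, which is the assertion. The expectation reformulation $\BE^\mu[\gF(X_n\xi,X_n^{-1}\eta)]=A_n(\gF)$ is immediate from $X_n\sim\mu^{*n}$ under $\BP^\mu$.

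There is no real obstacle here; the only point requiring a little care is that Theorem \ref{thm:equi} as stated involves a boundary retraction $\hat{\cdot}$ applied to points $X_n.o\in X$, whereas Corollary \ref{cor:markov} evaluates $\gF$ directly at genuine boundary points $X_n\xi$, $X_n^{-1}\eta\in\dd X$. One should remark that the corollary follows by taking $\xi,\eta\in\dd X$ themselves (so that no retraction intervenes, $f$ being the identity on $\dd X$), or alternatively observe that the argument of the theorem goes through verbatim with $X_n.o$ replaced by $X_n\xi$ since $\lim_n X_n\xi=Z$ almost surely for any fixed $\xi\in\dd X$ by the same contraction argument (Lemma \ref{lem:proximal} gives $X_n(\cdot)_*\nu\to\gd_{Z(\cdot)}$, and $(X_n\xi|X_n.o)_o\to\infty$ almost surely). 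I would state the corollary for boundary points $\xi,\eta$ and simply remark that the passage from products to general $\gF$ is Stone-Weierstrass plus the uniform bound above.
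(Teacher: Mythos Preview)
Your density argument via Stone--Weierstrass is correct and, if anything, cleaner than the paper's: the paper phrases the same step through the injective tensor product identification $C^0(\dd X)\hat\otimes_\e C^0(\dd X)=C^0(\dd X\times\dd X)$ and weak density in the bidual, but the content is identical --- approximate $\Phi$ uniformly by finite sums of elementary tensors and use that each $A_n$ is integration against a probability, so errors are controlled uniformly in $n$.

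Where you and the paper differ is the bridge from Theorem~\ref{thm:equi} (which concerns $\hat\vp_i(X_n^{\pm1}.o)$ with a retraction applied to orbit points in $X$) to the corollary (which concerns $\vp_i(X_n^{\pm1}\xi)$ at genuine boundary points). Your option~(a) --- ``no retraction intervenes since $g\xi\in\dd X$'' --- is not a valid deduction: the theorem never hands you $g\xi$, only $\hat{g.o}$, and the fact that $f|_{\dd X}=\id$ is irrelevant because $g.o\notin\dd X$. Your option~(b), rerunning the proof with $X_n.o$ replaced by $X_n\xi$, does work but reopens the argument. The paper's device is slicker: observe that for fixed $\xi_0\in\dd X$ the map $B:X=\Ga\to\dd X$, $g\mapsto g.\xi_0$, itself extends continuously to a boundary retraction on $\ol X$ (since $\Ga$ acts geometrically, $g_n\to\zeta\in\dd X$ forces $g_n\xi_0\to\zeta$). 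Choosing this $B$ as the retraction for the first factor and the analogous $g\mapsto g.\eta_0$ for the second --- Theorem~\ref{thm:equi} explicitly allows different retractions for $\vp_1$ and $\vp_2$ --- makes $\hat\vp_1(g.o)=\vp_1(g\xi_0)$ and $\hat\vp_2(g^{-1}.o)=\vp_2(g^{-1}\eta_0)$ on the nose, so the corollary becomes a literal instance of the theorem with no reopening of the proof. That is the observation you are missing; once you have it, your Stone--Weierstrass reduction finishes the job.
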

\begin{proof}
Observe that:
$$C^0(\dd X)\otimes_{\text{alg}} C^0(\dd X)\subset C^0(\dd X)\hat{\otimes}_\e C^0(\dd X)=C^0(\dd X\times\dd X)$$
where $\otimes_{\text{alg}}$ and $\hat{\otimes}_\e$ stand respectively for the algebraic tensor product and the injective one \cite{MR1888309}.
If $J:C^0(\dd X\times\dd X)\rightarrow C^0(\dd X\times\dd X)^{**}$ denote the topological bidual injection one has:
$$\ol{J[C^0(\dd X)\otimes_{\text{alg}} C^0(\dd X)]}^{\text{w}}=C^0(\dd X\times\dd X)^{**}$$
where $\ol{*}^{\text{w}}$ stands for the weak closer on $C^0(\dd X\times\dd X)^{**}$.
Using the fact that probability measures on $C^0(\dd X\times\dd X)$ are contractions one deduce the extension of Theorem \ref{thm:equi} for arbitrary function on $\dd X\times\dd X$ and choice of boundary retractions.

Take $B:\Ga=X\to\dd X$ given by $B(g)=g.\xi_0$ for some fixed $\xi_0\in\dd X$. 
Since $\Ga$ acts geometrically on $X$, $B$ extends to a boundary retraction on $\ol{X}$.
The corollary follows from Theorem \ref{thm:equi} applied to boundary retractions of this type.
%Given any $P_{(2)}$-stationary probability measure $\nu_{(2)}$ on $\dd X\times\dd X$ and continuous function $\Phi\in C^0(\dd X\times\dd X)$ the dominated convergence theorem implies:
%$$\int_{\dd X\times\dd X}\Phi d\nu_{(2)}=\int_{\dd X\times\dd X}P^n_{(2)}\Phi d\nu_{(2)}\rightarrow \int_{\dd X\times\dd X}\Phi d\nu\otimes\nu$$
%which proves the unique ergodicity.
\end{proof}

%Another consequence is $P_{(2)}$-unique ergodicity of the measure $\nu\otimes\nu$.
%Indeed given any $P_G$-invariant probability measure, $\nu'$, then for all $\Psi\in C^0(\dd X\times\dd X)$, the dominated convergence theorem implies:
%$$\int_{\dd X\times\dd X}\Psi d\nu'=\int_{\dd X\times\dd X}P_G^n\Psi d\nu'\rightarrow \int_{\dd X\times\dd X}\Psi d\nu\otimes\nu$$

\begin{proof}[Proof of Theorem \ref{thm:equi}]
Let us denote $I_n=\BE^\mu[\vp_1(\hat{X_n^{-1}.o})\vp_2(\hat{X_n.o})]$ for $n\ge0$ and $I_+=\limsup_n I_n$, which is bounded above by $\|\vp_1\|_\infty\|\vp_2\|_\infty$.\\
It is enough to prove:
$$I_+\le \int_{\dd X}\vp_1\,d\nu.\int_{\dd X}\vp_2\,d\nu$$
Indeed, exchanging $\vp_1$ for $-\vp_1$ leads to: 
$$\int_{\dd X} \vp_1d\nu.\int_{\dd X} \vp_2d\nu\le \liminf_n I_n$$
and concludes the proof of Theorem \ref{thm:equi}.\\

Let us start by introducing the maps:
$$F_{n,k}:\gO\rightarrow \BR_+,\quad \go\mapsto (X_n(\go).o|X_n\circ\gTh^k(\go).o)_{o}$$
and 
$$F'_{n,k}:\gO\rightarrow \BR_+,\quad \go\mapsto (X^{-1}_n\circ U^k(\go).o|X_n^{-1}\circ\gTh^k(\go).o)_{o}$$
for $n,k\ge0$ where $U$ and $\gTh$ denote respectively the Bernoulli and the Markov shift on $\gO$.\\
Observe that:
\begin{align*}
&|F_{n,k}-\ell.n|\\
&=\half|(|X_n.{o}|_{o}-\ell.n)+(|X_{n+k}.{o}|_{o}-\ell.(n+k))-(|H_{n+1}\dots H_{n+k}.{o}|_{o}-\ell.k)|;\\
&|F'_{n,k}-\ell.n|\\
&=\half|(|H_{n+k}^{-1}\dots H_{k+1}^{-1}.{o}|_{o}-\ell.n)+(|X_{n+k}^{-1}.{o}|_{o}-\ell.(n+k))-(|H_{k}^{-1}\dots H_{1}^{-1}.{o}|_{o}-\ell.k)|
\end{align*}
for all $n,k\ge0$.
The subadditive ergodic theorem \cite{MR704553} guarantees that $\frac{|X_n|_o}{n}\rightarrow \ell$ pointwise and in $L^1(\gO,\BP)$ where $\ell>0$ denotes the drift of $\mu$, that is non-zero since the support of $\mu$ generates a non-amenable subgroup in $\Ga$.
Moreover following Remark \ref{rem:invmarkov} the 1st moment assumption implies that $\frac{F_{n,k}}{n},\frac{F_{n,k}'}{n}\rightarrow\ell$ pointwise and in $L^1(\gO,\BP)$, when $k$ is fixed and $n$ goes to infinity.
%Note that the convergence of $\frac{|H^{(-1)}_{n+1}\dots H^{(-1)}_{n+k}o|_o}{n}$ to $0$ when $k$ is fixed and $n$ goes to infinity is a consequence of the law of large number since $\mu$ is assumed to have a 1st moment (cf. Proposition \ref{lem:asconv} proof).\\

On the other hand since $(\vp_2(\hat{X_n^{-1}.o}))_n$ defines a bounded sequence in $L^\infty(\gO,\BP)=L^1(\gO,\BP)^*$ one can assume that $\vp_2(\hat{X_n^{-1}.o})$ converges to some function $\vp_{-\infty}\in L^\infty(\gO,\BP)$ for the weak-$*$ topology $\gs(L^\infty,L^1)$.\\

We are going to prove the following inequality:
$$I_+\le \int_\gO\vp_1(Z\circ U^{k}(\go))\vp_{-\infty}(\go)\BP(\go)$$

Let us fixe $k\in\BN$ and $\e>0$.
Using Egoroff theorem one can find $B\subset\gO$ with $\BP(B)\le \e$ such that $(\frac{F_{n,k}}{n})_n$ and $(\frac{F'_{n,k}}{n})_n$ converge uniformly to $\ell$ on $A=\gO\setminus B$.\,
% when $n$ goes to infinity, $B_{n+1}\subset B_n$ and $M=M(n)\ge n$ such that for all $m\ge M$
%$$\gth_{o,\e}(X_m(\go).{o},X_m\circ\gTh^k(\go).{o})\le e^{-\half\ell n}$$%modifié
%respectively,
%$$\gth_{o,\e}(X^{-1}_m\circ U^k(\go).{o},X_m^{-1}\circ\gTh^k(\go).{o})\le e^{-\half\ell n}$$%modifié
%for all $\go\in A_n$.\\
Because $\hat{\vp_i}$, $i=1,2$, are uniformly continuous on the compact $\overline{X}$ there exists $\eta\le\e$ such that for all $x,y\in\ol{X}$ with $\gth_{o,\e}(x,y)\le\eta$, $|\hat{\vp_i}(x)-\hat{\vp_i}(y)|\le\e$.\\

Take $n_0$ large enough such that $e^{-\half\ell n_0}\le\eta$ and for all $n\ge n_0$,
\begin{equation}\label{eq:dist}
|\frac{F_{n,k}(\go)}{n}-\ell|+|\frac{F'_{n,k}(\go)}{n}-\ell|\le \half\ell
\end{equation}
%\begin{equation}\label{eq:simple}
%|\BE^\mu[\hat{\vp}_i(X_{n}.{o_i})]-\int_{\dd X} \vp_i(\xi)d\nu(\xi)|\le \e
%\end{equation}
for all $\go\in\gO\setminus B$,
$$|\BE^\mu[\vp_1(Z\circ U^{k})\hat{\vp}_2(X^{-1}_{n}.{o})]-\int_{\gO} \vp_1(Z\circ U^{k}(\go))\vp_{-\infty}(\go)\BP(\go)|\le \e$$
and
$$\int_\gO|\vp_1(Z(\go))-\hat{\vp}_1(X_{n}(\go).{o})|\BP(\go)\le\e.$$

%for $i=1,2$ and all $n\ge n_0$.\\
In particular Equation \ref{eq:dist} implies:
$$\gth_{o,\e}(X_n(\go).{o},X_n\circ\gTh^k(\go).{o}),\,\gth_{o,\e}(X^{-1}_n\circ U^k(\go).{o},X_n^{-1}\circ\gTh^k(\go).{o})\le \eta$$
for $n\ge n_0$ and $\go\in\gO\setminus B$.\\
%The existence of such $n_0$ follows by a dominated convergence argument applied to the $(\hat{\vp}_i(X_{n}))_n$ which converge pointwise to $\vp_i(Z)$ for $i=1,2$ where $Z$ is $\nu$-distributed.\\

For $n\ge n_0$ one has:
$$I_+-\e\le\BE^\mu[\hat{\vp}_1(X_{n+{k}}^{-1}.{o})\hat{\vp}_2(X_{n+{k}}.{o})]=\BE^\mu[\hat{\vp}_1(X_{n}^{-1}\circ\gTh^{k}(\go).{o})\hat{\vp}_2(X_{n}\circ\gTh^{k}(\go).{o})]$$
Observe that:
\begin{align*}
%\BE^\mu[&\hat{\vp}_1(X_{n_1}^{-1}\circ\gTh^{k_1}(\go).{o})\hat{\vp}_2(X_{n_1}\circ\gTh^{k_1}(\go).{o})]\\
&\int_\gO\hat{\vp}_1(X_{n}^{-1}\circ\gTh^{k}(\go).{o})\hat{\vp}_2(X_{n}\circ\gTh^{k}(\go).{o})\BP(\go)=\int_\gO\hat{\vp}_1(X_{n}^{-1}\circ U^{k}(\go).{o})\hat{\vp}_2(X_{n}(\go).{o})\BP(\go)\\
&+\int_{\gO=\gO\setminus B\cup B}(\hat{\vp}_1(X_{n}^{-1}\circ\gTh^{k}(\go).{o})-\hat{\vp}_1(X_{n}^{-1}\circ U^{k}(\go).{o}))\hat{\vp}_2(X_{n}\circ\gTh^{k}(\go).{o})\BP(\go)\\
&+\int_{\gO=\gO\setminus B\cup B}\hat{\vp}_1(X_{n}^{-1}\circ U^{k}(\go).{o})(\hat{\vp}_2(X_{n}\circ\gTh^{k}(\go).{o})-\hat{\vp}_2(X_{n}(\go).{o}))\BP(\go)\\
&\le\int_\gO\hat{\vp}_1(X_{n}^{-1}\circ U^{k}(\go).{o})\hat{\vp}_2(X_{n}(\go).{o})\BP(\go)+4\BP(B)\|\vp_1\|_\infty\|\vp_2\|_\infty+2\e\max_{i=1,2}\|\vp_i\|_\infty
\end{align*}

In other words:
$$I_++O(\e)\le \int_\gO\hat{\vp}_1(X_{n}\circ U^{k}(\go).{o})\hat{\vp}_2(X_{n}^{-1}(\go).{o})\BP(\go)$$

On the other hand, using the inequality $\|\vp_{-\infty}\|_\infty\le\|\vp_2\|_\infty$ one obtain:
\begin{align*}
&|\int_\gO\hat{\vp}_1(X_{n}\circ U^{k}(\go).{o})\hat{\vp}_2(X_{n}^{-1}(\go).{o})\BP(\go)-\int_\gO\vp_1(Z\circ U^{k}(\go))\vp_{-\infty}(\go)\BP(\go)|\\
&\le|\int_\gO\hat{\vp}_1(X_{n}\circ U^{k}(\go).{o})[\hat{\vp}_2(X_{n}^{-1}(\go).{o})-\vp_{-\infty}(\go)]\BP(\go)|\\
&+|\int_\gO[\vp_1(Z\circ U^{k}(\go))-\hat{\vp}_1(X_{n}\circ U^{k}(\go).{o})]\vp_{-\infty}(\go)\BP(\go)|\\
&\le|\int_\gO\hat{\vp}_1(X_{n}\circ U^{k}(\go).{o})[\hat{\vp}_2(X_{n}^{-1}(\go).{o})-\vp_{-\infty}(\go)]\BP(\go)|\\
&+\|\vp_2\|_\infty\int_\gO|\vp_1(Z(\go))-\hat{\vp}_1(X_{n}(\go).{o})|\BP(\go)\le \e(1+\|\vp_2\|_\infty)\\
\end{align*}
%%HERE
and since $\e$ can be chosen arbitrary small it follows that:
$$I_+\le \int_\gO\vp_1(Z\circ U^{k}(\go))\vp_{-\infty}(\go)\BP(\go)$$
for all $k\in\BN$.\\

Eventually observe that the weak convergence of $(\vp_2(X_n^{-1}.o))_n$ to $\vp_{-\infty}$ implies:
$$\BE^\mu(\vp_2(X_n^{-1}.o))\rightarrow \int_\gO\vp_{-\infty}(\go)\BP(\go)$$ 
and the symmetry of $\mu$ gives:
$$\BE^\mu(\vp_2(X_n^{-1}.o))=\BE^\mu(\vp_2(X_n.o))\rightarrow \int_{\dd X}\vp_{2}d\nu$$
This imposes:
$$\int_\gO\vp_{-\infty}(\go)\BP(\go)=\int_{\dd X}\vp_{2}d\nu$$

Since the Bernouilli shift $U$ is mixing one deduce:
%The last step consists to use the mixing property of  to deduce:
\begin{align*}
I_+\le \int_\gO\vp_1(Z\circ U^{k}(\go))\vp_{-\infty}(\go)\BP(\go)\xrightarrow{k\rightarrow+\infty}&\int_{\dd X}\vp_1d\nu\int_\gO\vp_{-\infty}(\go)\BP(\go)\\
&=\int_{\dd X}\vp_1d\nu\int_{\dd X}\vp_2d\nu
\end{align*}
%Since $k_1$ was chosen larger than ${n_1}$ it follows that $X_{n_1}^{-1}\circ U^{k_1}$ and $X_{n_1}$ are independent and
%\begin{align*}
%\BE^\mu[\hat{\vp}_1(X_{n_1}^{-1}\circ U^{k_1}.{o})\hat{\vp}_2(X_{n_1}.{o})]&=\BE^\mu[\hat{\vp}_1(X_{n_1}^{-1}\circ U^{k_1}.{o})]\BE^\mu[\hat{\vp}_2(X_{n_1}.{o})]\\
%&=\BE^\mu[\hat{\vp}_1(X_{n_1}.{o})]\BE^\mu[\hat{\vp}_2(X_{n_1}.{o})]
%\end{align*}
%the last equality follows from the fact that the Bernoulli shift is a measure preserving transformation and $\mu$ is a symmetric measure.

%The choice of ${n_1}$ implies:
%$$\BE^\mu[\hat{\vp}_1(X_{n_1}.{o})]\BE^\mu[\hat{\vp}_2(X_{n_1}.{o})]\le \int_{\dd X} \vp_1(\xi)d\nu(\xi)\int_{\dd X} \vp_2(\xi)d\nu(\xi)+2\e\max_{i=1,2}\|\vp_i\|_\infty$$
%and thus
%$$I_++O(\e)+o_{n_1}(1)\le\int_{\dd X} \vp_1d\nu.\int_{\dd X} \vp_2d\nu$$
%Since $\e$ and ${n_0}$ can be chosen respectively arbitrary small and large one deduce 
%$$I_+\le\int_{\dd X} \vp_1d\nu.\int_{\dd X} \vp_2d\nu$$
%Exchanging $\vp_1$ for $-\vp_1$ a similar argument leads to: 
%$$\int_{\dd X} \vp_1d\nu.\int_{\dd X} \vp_2d\nu\le \liminf_n I_n$$
%and this concludes the proof of Theorem \ref{thm:equi}
\end{proof}

%%%%%%%%%%%%%%%%%%%%%%%%%%%%%%%%%%%%%%%%%%%%%%%%%%%%%%%%%%%%
%%%%%%%%%%%%%%%%%%%%%%%%%%%%%%%%%%%%%%%%%%%%%%%%%%%%%%%%%%%%
%%%%%%%%%%%%%%%NEW SECTION%%%%%%%%%%%%%%%%%%%%%%%%%%%%%%%%%%%%
\section{A probabilistic approach of the irreducibility}\label{sec:main}%%SUBSECTION%%%%%%%%%%%TO DO IN A MORE GENERAL SETUP
%Let $\Ga$ be a discrete group acting geometrically on $X=\Ga$ endowed with $d\in\cD(\Ga)$ a Green metric associated a admissible finitely supported probability $\mu_0$.\\
Let $\nu$ be a Ahlfors-regular quasiconformal measure on $\dd X$ for the action of $\Ga$ on $(X,d)$. 
The principal example in our framework are stationary measures on $\dd X$ for finitely supported symmetric admissible probabilities on $\Ga$.
The associated \textit{boundary representation} of $\Ga$ is defined as:
$$\pi_o:\Ga\rightarrow \mathcal{U}[L^2[\dd X,\nu]],\quad \pi_o(g)\vp(\gx)=\sqrt{r_{o}(g^{-1},\xi)}\vp(g^{-1}\gx)$$
where $r_{o}$ denotes the Radon-Nikodym derivative cocycle given by the formula:
$$r_{\nu}(g,\xi)=\frac{dg^{-1}_*\nu}{d\nu}(\xi)\asymp e^{\e D_\e\gb_\xi(g^{-1}.o)}$$
with $g\in\Ga$, $\xi\in\dd X$.\\
%If $\nu$ denotes the unique $\mu_0$-stationary measure on $\dd X$ the \textit{boundary representation} on $(\Ga,\mu_0)$ is defined as:
%$$\pi_o:\Ga\rightarrow \mathcal{U}[L^2[\dd X,\nu]],\quad \pi_o(g)\vp(\gx)=\sqrt{r_{o}(g^{-1},\xi)}\vp(g^{-1}\gx)$$

%Using the identification established in \cite{MR2919980} this stationary measure can be seen as the Patterson Sullivan measure of the hyperbolic Green distance $d$ and following holds:
%$$\frac{dg^{-1}_*\nu}{d\nu}=e^{-\gd_\Ga\beta_\gx(g^{-1}.o)}$$
%with $g\in\Ga$, $\xi\in\dd X$ and $\gd_\Ga=1$ the critical exponent of $\Ga$ with respect to $d$.\\

The \textit{Harish-Chandra function} on $\Ga$ is defined as the matrix coefficient given by: 
$$\Xi_o(g)=(\pi_o(g){\bf 1}_{\dd\Ga}|{\bf 1}_{\dd\Ga})=\|\sqrt{r_{o}(g^{-1},.)}\|_{L^1(\dd X,\nu)}=\Xi_o(g^{-1})$$
for $g\in \Ga$.\\ %, which controls the behavior of matrix coefficients at infinity cf. \ref{}.\\
In the rest one denote: 
$$\ti{\pi}_o(g)=\frac{\pi_o(g)}{\Xi_o(g)}$$ 
the renormalization of the representation $\pi_o$ by the Harish-Chandra function $\Xi_o$.
% which, as it is shown below, satisfies better analytic properties.\\

Let $\mathcal{P}_{{\bf1}_{\dd X}}$ be the one dimensional projector on $L^2[\dd X,\nu]$ given by:
$$\mathcal{P}_{{\bf1}_{\dd X}}(\vp)=(\vp|{\bf1}_{\dd X}){\bf1}_{\dd X}$$
for $\vp\in L^2[\dd X,\nu]$.

The principal result of this section is:
\begin{thm}\label{thm:flipap}
Let $\mu\in \tprob(\Ga)$ be a finitely supported, symmetric probability on $\Ga$ and $\nu$ its associated stationary measure on $\dd X$.\\
Denote $\mathcal{P}_n\in \mathcal{B}(L^2[\dd X,\nu])$ the operator defined as:
$$\mathcal{P}_n=\BE^\mu[\widetilde{\pi}_o(X_n)]=\sum_{g\in\Ga}\frac{\pi_o(g)}{\Xi_o(g)}\mu^{*n}(g).$$
Then $(\mathcal{P}_n)_n$ defines an uniformly bounded sequence of operators which converges to the one dimensional projector $\mathcal{P}_{{\bf1}_{\dd X}}$ for the weak-* operator topology, in other words:
$$\sum_{g\in\Ga}\frac{(\pi_o(g)\vp|\psi)}{\Xi_o(g)}\mu^{*n}(g)\xrightarrow{n\rightarrow+\infty}\int_{\dd X}\vp d\nu\int_{\dd X}\psi d\nu$$
for all $\vp,\psi\in L^2[\dd X,\nu]$.
\end{thm}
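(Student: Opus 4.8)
\textbf{Proof plan for Theorem \ref{thm:flipap}.}
The plan is to split the argument into two independent pieces, exactly as announced in the Outlines: a \emph{reduction} from weak convergence of the operators $\mathcal{P}_n$ to the scalar equidistribution of Theorem \ref{thm:equi}, and a \emph{uniform boundedness} estimate $\sup_n\|\mathcal{P}_n\|<\infty$. Granting both, the conclusion is soft: since $C^0(\dd X)$ is dense in $L^2[\dd X,\nu]$ and $\sup_n\|\mathcal{P}_n\|<\infty$, it suffices to prove $(\mathcal{P}_n\vp|\psi)\to\int\vp\,d\nu\int\psi\,d\nu$ for $\vp,\psi\in C^0(\dd X)$; a standard $3\varepsilon$-argument then upgrades this to all $L^2$ vectors.

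\textbf{Step 1: reduction to equidistribution (Proposition \ref{prop:coef}).} Expanding the matrix coefficient, for $\vp,\psi\in C^0(\dd X)$ one has
\[
(\mathcal{P}_n\vp|\psi)=\sum_{g\in\Ga}\frac{\mu^{*n}(g)}{\Xi_o(g)}\int_{\dd X}\sqrt{r_o(g^{-1},\xi)}\,\vp(g^{-1}\xi)\,\overline{\psi(\xi)}\,d\nu(\xi).
\]
The Ahlfors-regularity of $\nu$ together with the conformal relation $r_o(g,\xi)\asymp e^{\e D_\e\gb_\xi(g^{-1}o)}$ gives a shadow estimate: $\sqrt{r_o(g^{-1},\xi)}$ concentrates, as $|g|_o\to\infty$, near the direction $\hat{g^{-1}.o}\in\dd X$, with the total mass $\|\sqrt{r_o(g^{-1},\cdot)}\|_{L^1}=\Xi_o(g)$. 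Thus the normalized kernel $\Xi_o(g)^{-1}\sqrt{r_o(g^{-1},\cdot)}\,\nu$ behaves like an approximate identity collapsing to $\gd_{\hat{g^{-1}.o}}$, and simultaneously $g^{-1}\xi\to \hat{g.o}$ for $\xi$ in the relevant shadow (Lemma \ref{lem:proximal}-type proximality). Carrying this out carefully, one should obtain
\[
(\mathcal{P}_n\vp|\psi)=\sum_{g\in\Ga}\mu^{*n}(g)\,\vp(\hat{g.o})\,\overline{\psi(\hat{g^{-1}.o})}+o(1),
\]
after which Theorem \ref{thm:equi} (in the version where the boundary retraction may differ for the two factors) gives the limit $\int\vp\,d\nu\int\overline{\psi}\,d\nu$. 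I expect this \emph{concentration/reduction} step to be the main obstacle: one must make the approximate-identity heuristic quantitative, uniformly over $g$ with $|g|_o$ large, controlling the contribution of $\xi$ outside the shadow via Ahlfors-regularity and the exponential decay of the Busemann cocycle, and handling the finitely many $g$ of bounded length separately (they contribute a vanishing $\mu^{*n}$-mass by non-amenability, Kersten's theorem and Lemma \ref{subsub:drift}).

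\textbf{Step 2: uniform boundedness of $(\mathcal{P}_n)_n$.} The estimate $\sup_n\|\mathcal{P}_n\|_{\mathcal{B}(L^2)}<\infty$ is where the symmetry and finite support of $\mu$, and the strong/Ancona hyperbolicity of the Green metric, are used. The idea is to dominate the kernel of $\mathcal{P}_n$ by a convolution-type operator: writing the integral kernel of $\mathcal{P}_n$ on $\dd X\times\dd X$ as $K_n(\xi,\eta)=\sum_g \mu^{*n}(g)\Xi_o(g)^{-1}\sqrt{r_o(g^{-1},\xi)r_o(g,\eta)}\,h(g^{-1}\xi,\eta)$-type expression, one bounds $\sqrt{r_o(g^{-1},\xi)r_o(g,\eta)}$ using the cocycle identity and the Gromov-product estimate $\gb_\xi(g^{-1}o)+\gb_\eta(go)\approx -2(g^{-1}.o|\xi)_o-2(g.o|\eta)_o+\text{const}$, and then sums over $g$ using $\mu^{*n}(g)\le \max\{\mu^{*(n-1)}(e),\mu^{*n}(e)\}\lesssim\rho(\mu)^n$ (Lemma \ref{subsub:drift}) to absorb the exponential growth of the number of group elements at each length against the decay of $\Xi_o$ and the return probabilities. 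Concretely one reduces, after a Schur-test / $TT^*$ computation, to showing that $\sum_{g}\mu^{*n}(g)\,\Xi_o(g)^{-1}\,e^{\gd_\Ga\gb_\xi(g^{-1}o)}$ is bounded by $C$ uniformly in $n$ and $\nu$-a.e.\ $\xi$, which follows from the comparison $\Xi_o(g)\asymp (1+|g|_o)e^{-\gd_\Ga|g|_o/2}$ (a known Harish-Chandra asymptotic in this setting, cf.\ the techniques of \cite{MR2787597}) together with a shadow-covering argument. This step is technical but essentially a bookkeeping of exponential rates; the genuinely delicate input is the sharp two-sided control of $\Xi_o$, which I would isolate as a separate lemma before entering the proof.
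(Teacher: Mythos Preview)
Your two-step architecture is exactly the paper's: reduce the matrix coefficient to the scalar equidistribution of Theorem~\ref{thm:equi} on a dense subspace (Proposition~\ref{prop:coef}), prove $\sup_n\|\mathcal{P}_n\|<\infty$ separately, and conclude by density. Two points where your execution diverges from the paper are worth flagging.

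\emph{Step 1.} The paper works with Lipschitz rather than merely continuous test functions. The reason is that the ``approximate identity'' heuristic is made quantitative in Lemma~\ref{lem:l2proximal}: for $\Psi$ Lipschitz one gets $|(\Psi\,|\,u_g\otimes u_{g^{-1}})-\Psi(\hat g,\widecheck g)|\le 2\gl(\Psi)(1+|g.o|_o)^{-1/D_\e}$, and this explicit rate is what allows the sum over $\{|g|_o\ge sn\}$ to be controlled. The short elements $\{|g|_o\le sn\}$ are disposed of exactly as you say, via $\mu^{*n}(g)\lesssim\rho(\mu)^n$ and Kersten. (Minor slip: in your displayed limit the roles of $\hat{g.o}$ and $\hat{g^{-1}.o}$ are swapped relative to the paper's convention; this is harmless by the symmetry of $\mu$.)

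\emph{Step 2.} Your Schur/Cauchy--Schwarz reduction to an $L^\infty$ bound on $\mathcal{P}_n{\bf 1}_{\dd X}$ is precisely Corollary~\ref{cor:corb}. (The quantity you write should carry a factor $\tfrac12$ in the exponent: one needs $\sum_g\mu^{*n}(g)\,u_g(\xi)\asymp\sum_g\mu^{*n}(g)\frac{e^{\gd_\Ga(g.o|\xi)_o}}{1+|g|_o}$ bounded, not the full Radon--Nikodym derivative.) However, the paper does \emph{not} finish by a shadow-covering/counting argument. A direct count would require a uniform bound of the type $\BP\big((X_n.o|\xi)_o\ge t\big)\preceq e^{-\gd_\Ga t}$ for all $n,t,\xi$, which is not obviously available. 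Instead the paper uses a three-regime split on $|X_n|_o$ (large via exponential Chebyshev with the finite exponential moment of $\mu$, small via the spectral gap), and for the main regime $a'n\le|X_n|_o\le an$ a genuinely different idea: Lemma~\ref{lem:ctrinf} shows $(g.o|\xi)_o\le\max\{(g\eta|\xi)_o,(g\eta'|\xi)_o\}+C'$ for any two fixed boundary points $\eta\neq\eta'$, so the orbit point $g.o$ is replaced by the boundary point $g.\eta$. One then integrates over $\eta\in U$ and uses the $\mu$-\emph{stationarity} of $\nu$ to turn $\sum_g\mu^{*n}(g)\int f_n(g\eta,\xi)\,d\nu(\eta)$ into $\int f_n(\eta,\xi)\,d\nu(\eta)$, which is bounded by Ahlfors-regularity (Lemma~\ref{lem:intcomp}). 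This stationarity trick is the missing ingredient in your sketch; without it the ``bookkeeping of exponential rates'' does not close.
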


Before we go to the proof of Theorem \ref{thm:flipap} let us give a direct consequence:
\begin{cor}\label{cor:irr}
Let $\mu\in \tprob(\Ga)$ be a finitely supported, symmetric probability on $\Ga$ and $\nu$ its associated stationary measure on $\dd X$.\\
Then the associated boundary representation $(\Ga,\pi_o,L^2[\dd X,\nu])$ is irreducible.
\end{cor}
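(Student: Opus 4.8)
The plan is to deduce irreducibility from Theorem \ref{thm:flipap} by a standard cyclicity-plus-weak-density argument, the key input being that the renormalized operators $\mathcal{P}_n$ lie in the von Neumann algebra (or at least the norm/weak closure of the linear span) generated by the representation $\pi_o$, and that they converge weakly to the rank-one projector $\mathcal{P}_{{\bf1}_{\dd X}}$. First I would observe that each $\mathcal{P}_n=\sum_g \Xi_o(g)^{-1}\mu^{*n}(g)\,\pi_o(g)$ is a (norm-convergent, since $\mu$ is finitely supported so the sum is finite) combination of the operators $\pi_o(g)$, hence belongs to the algebra $\mathcal{A}=\overline{\mathrm{span}}\{\pi_o(g):g\in\Ga\}$ generated by the representation. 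By Theorem \ref{thm:flipap} the sequence $(\mathcal{P}_n)_n$ is uniformly bounded and converges to $\mathcal{P}_{{\bf1}_{\dd X}}$ in the weak operator topology; therefore $\mathcal{P}_{{\bf1}_{\dd X}}$ lies in the weak-operator closure $\overline{\mathcal{A}}^{\,w}=\pi_o(\Ga)''$ (the bicommutant), by von Neumann's density theorem.

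Next I would record that the constant function ${\bf1}_{\dd X}$ is a cyclic vector for $\pi_o$. Concretely, if $\vp\in L^2[\dd X,\nu]$ is orthogonal to the closed $\pi_o(\Ga)$-invariant subspace generated by ${\bf1}_{\dd X}$, then $(\pi_o(g){\bf1}_{\dd X}\mid\vp)=0$ for all $g$; writing out $\pi_o(g){\bf1}_{\dd X}=\sqrt{r_o(g^{-1},\cdot)}$ and using the Ahlfors-regularity together with the conformal density property $r_o(g,\xi)\asymp e^{\delta_\Ga\beta_\xi(g^{-1}o)}$, these functions separate points of $L^2$ densely enough (the shadows $O_o(g.o,R)$ form a basis of neighbourhoods as $g$ ranges over $\Ga$, and $\sqrt{r_o(g^{-1},\cdot)}$ concentrates on such a shadow), forcing $\vp=0$. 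This is essentially the cyclicity statement alluded to in the text just before the corollary; I would cite or adapt the corresponding argument from \cite{MR2787597}.

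Now the endgame: let $T\in\pi_o(\Ga)'$ be a self-adjoint operator commuting with the representation; to prove irreducibility it suffices to show $T$ is scalar. Since $T$ commutes with every $\pi_o(g)$ it commutes with each $\mathcal{P}_n$, and passing to the weak-operator limit it commutes with $\mathcal{P}_{{\bf1}_{\dd X}}$. Hence $\mathcal{P}_{{\bf1}_{\dd X}}T\mathcal{P}_{{\bf1}_{\dd X}}=T\mathcal{P}_{{\bf1}_{\dd X}}=\mathcal{P}_{{\bf1}_{\dd X}}T$, which means $T{\bf1}_{\dd X}=\lambda{\bf1}_{\dd X}$ for some scalar $\lambda$ (because $\mathcal{P}_{{\bf1}_{\dd X}}$ is the rank-one projection onto ${\bf1}_{\dd X}$). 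Then for every $g\in\Ga$ one has $T\pi_o(g){\bf1}_{\dd X}=\pi_o(g)T{\bf1}_{\dd X}=\lambda\,\pi_o(g){\bf1}_{\dd X}$, so $T$ agrees with $\lambda\,\mathrm{Id}$ on the dense set $\{\pi_o(g){\bf1}_{\dd X}:g\in\Ga\}$ and hence $T=\lambda\,\mathrm{Id}$. By Schur's lemma $\pi_o$ is irreducible.

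The main obstacle I anticipate is not the $C^*$/von Neumann bookkeeping above but establishing the cyclicity of ${\bf1}_{\dd X}$ with enough care: one must exploit the Ahlfors-regularity of $\nu$ and the conformal cocycle estimate to show that the functions $\sqrt{r_o(g^{-1},\cdot)}$, suitably normalized, approximate indicators of a generating family of Borel sets (the shadows), and therefore span a dense subspace of $L^2[\dd X,\nu]$. This is where the geometric hypotheses genuinely enter, and it is the step I would write out in full detail (or isolate as a separate lemma, following \cite{MR2787597}).
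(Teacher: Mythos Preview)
Your overall architecture is correct and matches the paper's: one shows $\mathcal{P}_{{\bf1}_{\dd X}}\in\pi_o(\Ga)''$ via Theorem~\ref{thm:flipap}, establishes cyclicity of ${\bf1}_{\dd X}$, and concludes irreducibility (your Schur/commutant formulation is equivalent to the paper's direct subrepresentation argument).

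The one genuine divergence is in the cyclicity step. You propose the geometric route of \cite{MR2787597}: exploit Ahlfors regularity and the conformal cocycle to show that the normalized densities $\sqrt{r_o(g^{-1},\cdot)}$ approximate indicators of shadows and hence span $L^2$. The paper instead extracts cyclicity directly from its own probabilistic machinery. It invokes Proposition~\ref{prop:coef} (more precisely its corollary) with $\vp={\bf1}_{\dd X}$ and an arbitrary Lipschitz weight $w$: if $\psi\perp\pi_o(g){\bf1}_{\dd X}$ for every $g$, then the weighted averages $\BE^\mu[\hat w(X_n)(\tilde\pi_o(X_n){\bf1}_{\dd X}\mid\psi)]$ vanish identically, while equidistribution forces their limit to be $\int_{\dd X} w\psi\,d\nu$; letting $w$ range over Lipschitz functions gives $\psi=0$. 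This bypasses entirely the shadow-approximation analysis you single out as the main obstacle. Your approach is more self-contained and closer to the classical Patterson--Sullivan argument; the paper's is shorter and thematically consistent (cyclicity is a byproduct of the same equidistribution that yields the weak limit of $(\mathcal P_n)_n$), at the cost of relying on the weighted refinement in Proposition~\ref{prop:coef}, which is strictly more than Theorem~\ref{thm:flipap} alone.
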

\begin{proof}
We start by proving that the vector ${\bf1}_{\dd X}$ is cyclic for $(\Ga,\pi_o,L^2[\dd X,\nu])$.
Indeed assume $\psi\in L^2[\dd X,\nu]\ominus \ol{\text{span}\{\pi(g){\bf1}_{\dd X}|\,g\in\Ga\}}$, then Proposition \ref{prop:coef} implies:
\begin{align*}
\BE^\mu&[\hat{w}(X_n)(\tilde{\pi}_o(X_n){\bf1}_{\dd X}|\psi)_{L^2}]\\
&=\sum_{g\in\Ga}\hat{w}(g)(\tilde{\pi}_o(g){\bf1}_{\dd X}|\psi)_{L^2}\xrightarrow{n\rightarrow+\infty} \int_{\dd X}\psi(\xi)w(\xi)d\nu(\xi)
\end{align*}
for all $w\in \lips(\dd X,d_{o,\e})$.
Using the density of Lipschitz functions together with the fact that $\BE^\mu[\hat{w}(X_n)(\tilde{\pi}_o(X_n){\bf1}_{\dd X}|\psi)_{L^2}]=0$ for all $n$ it follows that $\psi=0$.\\

Let $V_0$ be a non-trivial subrepresentation of $(\Ga,\pi_o,L^2[\dd X,\nu])$.
Then there exists $\psi\in V_0$ such that $({\bf1}_{\dd X}|\psi_0)_{L^2}\neq0$.
Otherwise using the invariance of this subrepresentation we would have that $V_0$ is orthogonal to $\ol{\text{span}\{\pi(g){\bf1}_{\dd X}|\,g\in\Ga\}}=L^2[\dd X,\nu]$.

Finally since $\mathcal{P}_{{\bf1}_{\dd X}}$ belongs to the von Neumann algebra of $\pi_o$ it follows that $\mathcal{P}_{{\bf1}_{\dd X}}(\psi_0)=\gl_0{{\bf1}_{\dd X}}\in V_0$ with $\gl_0\neq0$ and therefore $V_0=L^2[\dd X,\nu]$.
\end{proof}
%\bigskip
%%%%%%%%%%%%%%%%%%%%%%%%%%DEBUT DE LA PREUVE%%%%%%%%%%%%%%%%%%%%%%%
%%%%%%%%%%%%%%%%%%%%%%%%%%DEBUT DE LA PREUVE%%%%%%%%%%%%%%%%%%%%%%%
%%%%%%%%%%%%%%%%%%%%%%%%%%DEBUT DE LA PREUVE%%%%%%%%%%%%%%%%%%%%%%%

\subsubsection{Proximal phenomenon in $L^2$ and boundary representations.}
$ $\\
A fundamental estimate of the Harish-Chandra is given by the following lemma:
\begin{lem}[\cite{MR3622235} \cite{Garncarek:aa}]\label{lem:est}
Let $\Xi_o$ be the Harish-Chandra function on $\Ga$ associated to its geometric action on $(X,d)$.
Then for all $g\in\Ga$:
$$\Xi_o(g)\asymp(1+|g.o|_o)e^{-\half\e D_\e|g.o|_o}.$$
where $D_\e$ is the Hausdorff dimension of $\nu$.% critical exponent of $\Ga$.
\end{lem}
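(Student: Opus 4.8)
The plan is to reduce $\Xi_o(g)$ to an integral of a Gromov product over $\dd X$ and then evaluate that integral by Sullivan's shadow lemma. Write $R=|g.o|_o$. Combining the defining formula $\Xi_o(g)=\|\sqrt{r_o(g^{-1},\cdot)}\|_{L^1(\dd X,\nu)}$ with the cocycle estimate $r_o(g^{-1},\xi)\asymp e^{\e D_\e\gb_\xi(g.o)}$ and the identity $\gb_\xi(g.o)=2(\xi|g.o)_o-|g.o|_o$ gives, uniformly in $g$,
\[
\Xi_o(g)\asymp e^{-\frac12\e D_\e R}\int_{\dd X}e^{\e D_\e(\xi|g.o)_o}\,d\nu(\xi).
\]
Hence it suffices to prove that $I(g):=\int_{\dd X}e^{\e D_\e(\xi|g.o)_o}\,d\nu(\xi)\asymp 1+R$, with constants independent of $g$.

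The key input is the two-sided shadow estimate
\[
\nu\big(\{\xi\in\dd X:(\xi|g.o)_o\ge t\}\big)\asymp e^{-\e D_\e t},\qquad 0\le t\le R,
\]
with implied constants uniform in $g$ and $t$. Granting it, note that $0\le(\xi|g.o)_o\le R$ up to a bounded additive error for every $\xi$, so the layer-cake identity
\[
I(g)=\nu(\dd X)+\e D_\e\int_0^{\infty}e^{\e D_\e t}\,\nu\big(\{(\xi|g.o)_o> t\}\big)\,dt
\]
has its $t$-integral effectively supported on $[0,R]$, where the integrand is $\asymp 1$ by the shadow estimate; this yields $I(g)\asymp 1+R$, which is the claim.

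It remains to establish the shadow estimate, and this is the only place that needs genuine care. Fix $R_0$ large enough (independent of $g$) that $O_o(g.o,R_0)\neq\emptyset$ and pick $\eta_g$ in it, so that $(\eta_g|g.o)_o\approx R$. The hyperbolic inequality $(\xi|g.o)_o\ge\min\{(\xi|\eta_g)_o,(\eta_g|g.o)_o\}-\gd$ and its symmetric counterpart show that $(\xi|g.o)_o$ and $\min\{(\xi|\eta_g)_o,R\}$ agree up to a bounded additive error; consequently, for $t\le R-C$ the set $\{\xi:(\xi|g.o)_o\ge t\}$ coincides, up to inflating the radius by a bounded factor, with the visual ball $B_{d_{o,\e}}(\eta_g,e^{-\e t})$ (recall $d_{o,\e}\asymp e^{-\e(\cdot|\cdot)_o}$ on $\dd X$), so the Ahlfors-regularity of $\nu$ gives $\nu(\{(\xi|g.o)_o\ge t\})\asymp(e^{-\e t})^{D_\e}=e^{-\e D_\e t}$. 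For the remaining range $R-C\le t\le R$, monotonicity squeezes this measure between $\nu(\{(\xi|g.o)_o\ge R\})$ and $\nu(\{(\xi|g.o)_o\ge R-C\})$, both $\asymp e^{-\e D_\e R}\asymp e^{-\e D_\e t}$. The passage between Gromov products based at the interior point $g.o$ and at the boundary point $\eta_g$ — that is, the identification of shadows of balls around $g.o$ with visual balls — is the substantive step; everything else is the routine layer-cake computation above. This shadow lemma for Ahlfors-regular quasiconformal measures is classical, cf. \cite{MR3622235}, \cite{Garncarek:aa}.
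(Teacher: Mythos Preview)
The paper does not actually prove this lemma: it is stated with citations to \cite{MR3622235} and \cite{Garncarek:aa} and used as a black box. So there is no ``paper's own proof'' to compare against; your proposal supplies what the paper omits.

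Your argument is correct and is precisely the standard one from the cited references. The reduction to $I(g)=\int_{\dd X}e^{\e D_\e(\xi|g.o)_o}\,d\nu(\xi)$ via the Busemann identity is right, and the layer-cake evaluation of $I(g)$ together with the shadow estimate is the heart of the matter. Note that the layer-cake step you carry out is essentially the computation the paper later records as Lemma~\ref{lem:intcomp} (with the interior point $g.o$ in place of a boundary point and the cutoff $k$ equal to $|g.o|_o$), so your proof dovetails nicely with the surrounding text. The comparison of the level sets $\{(\xi|g.o)_o\ge t\}$ with visual balls around a boundary shadow point $\eta_g$, and the appeal to Ahlfors-regularity, is exactly Sullivan's shadow lemma in this setting; that identification is indeed the only nontrivial geometric input, as you flag.
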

%Let us insist on the fact that this formula only holds in the context of convex cocompact hyperbolic groups.\\

As explained in the following lemma the sequence of absolutely continuous probabilities with respect to $\nu$ given by:
$$u_gd\nu=\frac{\sqrt{r_{o}(g^{-1},.)}}{\Xi_o(g)}d\nu\asymp\frac{e^{\e D_\e(g.o|\cdot)_o}}{(1+|g.o|_o)}d\nu$$
on $(\dd X,\nu)$ gives an approximation of the Dirac mass in the sense:
$$(\vp|u_g)_{L^2}=\int\vp(\gx)\frac{e^{\e D_\e(g.o|\gx)_o}}{(1+|g.o|_o)}\,d\nu(\gx)\rightarrow \vp(\xi_0)$$
when $g$ goes to $\xi_0$.\\
Moreover this convergence is controlled by the length of $g$:
\begin{lem}\cite{Garncarek:aa}\label{lem:l2proximal}
Let $\Psi\in\lips(\dd X\times \dd X,d)$ be a Lipschitz function on $\dd X\times \dd X$ for the $\ell^1$-product distance:
$d((\xi,\eta),(\xi',\eta'))=d_{o,\e}(\xi,\xi')+d_{o,\e}(\eta,\eta')$ for $\xi,\xi',\eta,\eta'\in\dd X$.
Assume $\e>0$ small enough such that the Hausdorff dimension, $D_\e$, of $(\dd X,d_{o,\e})$ is strictly greater than $1$.
Then:
$$|(\Psi|u_g\otimes u_{g^{-1}})_{L^2}-\Psi(\hat{g},\widecheck{g})|\le\frac{2\gl(\Psi)}{(1+|g.o|_o)^{1/{D_\e}}}$$
for all $g\in\Ga$, where $\gl(\Psi)$ is the Lipschitz constant of $\Psi$ and $\widecheck{g}=\hat{g^{-1}.o}$.
\end{lem}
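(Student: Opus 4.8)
The plan is to reduce the inner product $(\Psi|u_g\otimes u_{g^{-1}})_{L^2}$ to a weighted average of $\Psi$ against a product of two approximate-identity kernels, and then to control the deviation from the point value $\Psi(\hat g,\widecheck g)$ by splitting the domain into a small ball around $(\hat g,\widecheck g)$ (where the Lipschitz bound is strong) and its complement (where the mass of the kernel is small). First I would record the explicit form of the kernel: by Lemma \ref{lem:est} and the definition of $u_g$ one has, up to multiplicative constants,
$$u_g(\xi)\asymp\frac{e^{\e D_\e(g.o|\xi)_o}}{1+|g.o|_o},$$
so that $(u_g\otimes u_{g^{-1}})(\xi,\eta)\asymp(1+|g.o|_o)^{-2}\exp\bigl(\e D_\e[(g.o|\xi)_o+(g^{-1}.o|\eta)_o]\bigr)$. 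The key geometric input is the comparison $(g.o|\xi)_o\approx|g.o|_o-(g^{-1}\xi|o)_{g^{-1}.o}$ together with the visual metric identity $d_{o,\e}(\hat g,\xi)\asymp e^{-\e(g.o|\xi)_o}$; this lets one rewrite the exponential weight as a negative power of the distance from $\xi$ to $\hat g$, i.e. $u_g(\xi)\asymp (1+|g.o|_o)^{-1}d_{o,\e}(\hat g,\xi)^{-D_\e}$ (and symmetrically for $u_{g^{-1}}$ with $\widecheck g$), up to a bounded factor and for $\xi$ not too close to $\hat g$.

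Next I would estimate the total mass. Using Ahlfors-regularity of $\nu$ (dimension $D_\e$) and integrating the power $d_{o,\e}(\hat g,\xi)^{-D_\e}$ over dyadic annuli around $\hat g$, the annulus at scale $2^{-k}$ contributes $\asymp \nu(B(\hat g,2^{-k}))\cdot 2^{kD_\e}\asymp 1$, and there are $\asymp |g.o|_o$ relevant scales before one reaches the cutoff scale $e^{-\e|g.o|_o/\cdots}$ governed by the Gromov-product comparison; this is exactly where the factor $(1+|g.o|_o)^{-1}$ in Lemma \ref{lem:est} is consumed, so $\int u_g\,d\nu\asymp 1$, confirming $u_g\,d\nu$ is (comparable to) a probability measure. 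Then for the main estimate I fix the radius $r=(1+|g.o|_o)^{-1/D_\e}$, split $\dd X\times\dd X = B\sqcup B^c$ with $B=B(\hat g,r)\times B(\widecheck g,r)$, and write
$$(\Psi|u_g\otimes u_{g^{-1}})_{L^2}-\Psi(\hat g,\widecheck g)=\int_{\dd X^2}\bigl(\Psi(\xi,\eta)-\Psi(\hat g,\widecheck g)\bigr)(u_g\otimes u_{g^{-1}})(\xi,\eta)\,d\nu(\xi)d\nu(\eta).$$
On $B$ the integrand is bounded by $\gl(\Psi)\,d((\xi,\eta),(\hat g,\widecheck g))\le 2\gl(\Psi)r$, and since the kernel integrates to $\lesssim 1$ this piece is $\lesssim \gl(\Psi)r$. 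On $B^c$ one estimates the $u_g$-mass of the complement of $B(\hat g,r)$ by the same annular computation: the scales contributing now run only from $r$ down to the cutoff, a loss of $\asymp \frac1{D_\e}\log(1+|g.o|_o)$ scales out of $\asymp |g.o|_o$, but more usefully the tail mass $\int_{d_{o,\e}(\hat g,\xi)\ge r}u_g\,d\nu$ is bounded by $(1+|g.o|_o)^{-1}$ times the number of annuli below $r$, which after the normalization gives a bound of order $r$ as well (here is where the hypothesis $D_\e>1$ enters: it makes the sum $\sum 2^{-k}\cdot$ of the relevant tail converge and keeps the exponent $1/D_\e<1$ honest). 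Combining the two pieces yields the claimed bound $2\gl(\Psi)(1+|g.o|_o)^{-1/D_\e}$ after absorbing constants.

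The main obstacle I anticipate is making the annular/tail estimates on $B^c$ genuinely sharp rather than merely $o(1)$: a crude bound would give a logarithmic factor $\log(1+|g.o|_o)\cdot(1+|g.o|_o)^{-1}$ or a worse power, and squeezing out precisely the exponent $1/D_\e$ requires choosing the splitting radius $r$ optimally and using the Ahlfors upper and lower regularity bounds in tandem (the upper bound to control the tail mass, the lower bound to see that the normalization constant is bounded below). A secondary technical point is the passage from the multiplicative comparisons $\asymp$ in Lemma \ref{lem:est} and in the kernel formula to an additive error bound with an explicit constant $2$; I would handle this by carrying the comparison constants symbolically through the annular sums and noting they only affect the implied constant, which the statement leaves flexible except for the displayed $2\gl(\Psi)$ — so at the end one rescales $\e$ (allowed, since the hypothesis already restricts $\e$) or simply absorbs the geometric constant into a harmless enlargement, exactly as in \cite{Garncarek:aa}.
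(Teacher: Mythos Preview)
Your plan is correct and follows the same route as the paper: write $u_g(\xi)\asymp(1+|g.o|_o)^{-1}d_{o,\e}(\hat g,\xi)^{-D_\e}$, subtract $\Psi(\hat g,\widecheck g)$, invoke the Lipschitz bound, and split at the radius $r=(1+|g.o|_o)^{-1/D_\e}$. The one difference is in the treatment of the complement of the ball, and the paper's version dissolves exactly the obstacle you anticipate. Rather than estimating the raw tail mass of $u_g$ outside $B(\hat g,r)$ (which, as you note, yields $\log(1+|g.o|_o)/(1+|g.o|_o)$ via the annular count), the paper applies the Lipschitz bound \emph{before} splitting, so that the quantity to control is already the one-variable integral $\int d_{o,\e}(\hat g,\xi)\,u_g(\xi)\,d\nu(\xi)$. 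On $\{d_{o,\e}(\hat g,\xi)\ge r\}$ the integrand is $\asymp d_{o,\e}(\hat g,\xi)^{1-D_\e}/(1+|g.o|_o)$; since $D_\e>1$ this is \emph{decreasing} in the distance, hence pointwise $\le r^{1-D_\e}/(1+|g.o|_o)$, and integrating against the probability $\nu$ gives $r^{1-D_\e}/(1+|g.o|_o)=r$ directly---no dyadic annuli, no logarithm to absorb. That monotonicity is the precise (and only) place where the hypothesis $D_\e>1$ enters in the paper's argument.
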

%\begin{proof}
%Using the fact 
%$$\gb(o,g.o;\xi)=2(\xi|g.o)_o-|g.o|_o\preceq2(\xi|\hat{g.o})_o-|g.o|_o$$
%for all $g\in\Ga$, $\xi\in \dd X$ together with Lemma \ref{lem:est} we deduce for any $0<r<\text{Diam($\dd X$)}$:
%\begin{align*}
%|(\vp|u_g)-\vp(\hat{g})|&=|(\vp-\vp(\hat{g})|u_g)|\\
%&\le\gl(\vp)|\int_{\dd X=B_{\dd X}(\hat{g},r)\cup B^c_{\dd X}(\hat{g},r)}d(\hat{g},\xi)u_g(\xi)d\nu(\xi)|\\
%&\preceq\gl(\vp)[r+|\int_{B^c_{\dd X}(\hat{g},r)}\frac{d(\hat{g},\xi)^{1-D_\e}}{1+|g.o|_o}d\nu(\xi)|]\\
%&\le\gl(\vp)[r+\frac{r^{1-D_\e}}{1+|g.o|_o}]\\
%\end{align*}
%The lemma follows by taking $r=(1+|g.o|_o)^{-1/{D_\e}}$.
%\end{proof}

\begin{proof}
Using the fact :
$$\gb(o,g.o;\xi)=2(\xi|g.o)_o-|g.o|_o\preceq2(\xi|\hat{g.o})_o-|g.o|_o$$
for all $g\in\Ga$, $\xi\in \dd X$ together with Lemma \ref{lem:est} we deduce for any $0<r<\text{Diam($\dd X$)}$:
\begin{align*}
&|(\Psi|u_g\otimes u_{g^{-1}})_{L^2}-\Psi(\hat{g},\widecheck{g})|=|(\Psi-\Psi(\hat{g},\widecheck{g})|u_g\otimes u_{g^{-1}})_{L^2}|\\
&\le\gl(\Psi)|\int_{\dd X}[d_{o,\e}(\hat{g},\xi)+d_{o,\e}(\widecheck{g},\eta)]u_g(\xi)u_{g^{-1}}(\eta)d\nu(\xi)d\nu(\eta)|\\
&\preceq\gl(\Psi)[2r+|\int_{B^c_{\dd X}(\hat{g};r)}\frac{d_{o,\e}(\hat{g},\xi)^{1-D_\e}}{1+|g.o|_o}d\nu(\xi)|+|\int_{B^c_{\dd X}(\widecheck{g};r)}\frac{d_{o,\e}(\widecheck{g},\eta)^{1-D_\e}}{1+|g.o|_o}d\nu(\eta)|]\\
&\le2\gl(\Psi)[r+\frac{r^{1-D_\e}}{1+|g.o|_o}]\\
\end{align*}
where the second inequality is obtained by the decomposition $\dd X=B_{\dd X}((\hat{g},\widecheck{g});r)\cup B^c_{\dd X}((\hat{g},\widecheck{g});r)$.
The lemma follows by taking $r=(1+|g.o|_o)^{-1/{D_\e}}$.
\end{proof}

\begin{rem}
%Note that the assumption on $\e$ does not affect the measure $\nu$ nor the boundary representation considered.\\
Since $\nu$ is Ahlfors-regular and the Lebesgue differentiation theorem guarantees the density of Lipschitz functions in $L^2[\dd X,\nu]$.
\end{rem}

%\begin{rem}
%Let $(g_n)_n$ be a sequence of points in $\Ga$ such that $g_n^{-1}.o\rightarrow \xi\in\dd X$.
%Then Lemma \ref{lem:proximal} implies:
%\begin{align*}
%(\vp|r_{\nu}(g_n^{-1}))&=\int_{\dd X}\vp(\eta)r_{\nu}(g_n^{-1})(\eta)d{\nu}\\
%&=\int_{\dd X}\vp(\eta)dg_n^{-1}{\nu}\rightarrow \vp(\xi).
%\end{align*}
%In particular Lemma \ref{lem:l2proximal} can be observed as a $L^2$-analogue of this phenomenon.
%\end{rem}

\begin{prop}\label{prop:coef}
Given $w,\vp,\psi\in\lips(\dd X,d_{o,\e})$, three Lipschitz functions on $\dd X$, the following holds:
$$\BE^\mu[|w(\hat{X_n.o})(\ti{\pi}_o(X_n)\vp|\psi)-w(\hat{X_n.o})\vp(\hat{X_n^{-1}.o}).\psi(\hat{X_n.o})|]\xrightarrow{n\rightarrow+\infty} 0$$
\end{prop}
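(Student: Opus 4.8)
The goal of Proposition \ref{prop:coef} is to show that the matrix coefficient $(\ti{\pi}_o(X_n)\vp|\psi)$ can be replaced, up to an error that vanishes in $L^1(\gO,\BP)$ and after multiplication by the bounded factor $w(\hat{X_n.o})$, by the product $\vp(\hat{X_n^{-1}.o})\psi(\hat{X_n.o})$.

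\begin{proof}[Proof plan]
\textbf{Step 1: unravel the matrix coefficient.} By definition of $\pi_o$ and $\Xi_o$ we have
$$(\ti{\pi}_o(g)\vp|\psi)_{L^2}=\int_{\dd X}\frac{\sqrt{r_o(g^{-1},\xi)}}{\Xi_o(g)}\vp(g^{-1}\xi)\overline{\psi(\xi)}\,d\nu(\xi).$$
Performing the change of variables $\xi\mapsto g\xi$ (which multiplies the measure by $r_o(g^{-1},\cdot)^{-1}$, up to the uniform quasiconformal constant) rewrites this as an integral against $u_g\otimes u_{g^{-1}}$ of the function $\Psi_{\vp,\psi}(\xi,\eta)=\vp(\eta)\overline{\psi(\xi)}$, modulo the $\asymp$-constant coming from Lemma \ref{lem:est}; more precisely one gets $(\ti{\pi}_o(g)\vp|\psi)_{L^2}\asymp (\Psi_{\vp,\psi}\,|\,u_g\otimes u_{g^{-1}})_{L^2}$ with the quasiconformality constant absorbed, and the cleanest route is to verify directly that the difference $(\ti{\pi}_o(g)\vp|\psi)_{L^2}-(\Psi_{\vp,\psi}\,|\,u_g\otimes u_{g^{-1}})_{L^2}$ is $O\big(1/(1+|g.o|_o)^{1/D_\e}\big)$, exactly as in Lemma \ref{lem:l2proximal}. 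Here $\Psi_{\vp,\psi}$ is Lipschitz on $\dd X\times\dd X$ for the $\ell^1$-product distance with Lipschitz constant controlled by $\|\vp\|_\infty\gl(\psi)+\|\psi\|_\infty\gl(\vp)$.

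\textbf{Step 2: apply the proximality estimate.} Lemma \ref{lem:l2proximal} applied to $\Psi_{\vp,\psi}$ gives
$$\big|(\Psi_{\vp,\psi}\,|\,u_g\otimes u_{g^{-1}})_{L^2}-\vp(\hat{g^{-1}.o})\psi(\hat{g.o})\big|\le\frac{2\gl(\Psi_{\vp,\psi})}{(1+|g.o|_o)^{1/D_\e}}$$
for every $g\in\Ga$, using that $\Psi_{\vp,\psi}(\hat g,\widecheck g)=\vp(\widecheck g)\psi(\hat g)=\vp(\hat{g^{-1}.o})\psi(\hat{g.o})$. Combining with Step 1, there is a constant $C=C(\vp,\psi)$ with
$$\big|(\ti{\pi}_o(g)\vp|\psi)_{L^2}-\vp(\hat{g^{-1}.o})\psi(\hat{g.o})\big|\le\frac{C}{(1+|g.o|_o)^{1/D_\e}}$$
for all $g\in\Ga$. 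Multiplying by $|w(\hat{g.o})|\le\|w\|_\infty$ and substituting $g=X_n(\go)$ then gives the pointwise bound
$$\big|w(\hat{X_n.o})(\ti{\pi}_o(X_n)\vp|\psi)-w(\hat{X_n.o})\vp(\hat{X_n^{-1}.o})\psi(\hat{X_n.o})\big|\le\frac{C\|w\|_\infty}{(1+|X_n.o|_o)^{1/D_\e}}.$$

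\textbf{Step 3: take expectations and let $n\to\infty$.} It remains to show $\BE^\mu\big[(1+|X_n.o|_o)^{-1/D_\e}\big]\to 0$. Since $\mu$ is non-elementary the drift $\ell>0$, so $\frac{1}{n}|X_n.o|_o\to\ell$ almost surely (subadditive ergodic theorem, as recalled in \S\ref{subsub:drift}); hence $(1+|X_n.o|_o)^{-1/D_\e}\to 0$ $\BP$-almost surely. As this sequence is bounded by $1$, dominated convergence yields $\BE^\mu\big[(1+|X_n.o|_o)^{-1/D_\e}\big]\to 0$, and the proposition follows.

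\textbf{Main obstacle.} The delicate point is Step 1: one must control the passage between the honest matrix coefficient of the (renormalized, square-root-twisted) boundary representation and the bilinear form against the approximate-identity densities $u_g\otimes u_{g^{-1}}$, making sure the quasiconformality constants, the $\asymp$ in Lemma \ref{lem:est}, and the $\preceq$ relating $\gb(o,g.o;\xi)$ to $(\xi|\hat{g.o})_o$ all combine into a genuine additive $O\big((1+|g.o|_o)^{-1/D_\e}\big)$ error rather than merely a multiplicative comparison — this requires running the same $B_{\dd X}(\hat g;r)$ versus $B^c_{\dd X}(\hat g;r)$ splitting as in the proof of Lemma \ref{lem:l2proximal}, now with the square-root kernel. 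Steps 2 and 3 are then routine given the results already established.
\end{proof}
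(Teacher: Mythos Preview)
Your overall architecture---obtain a pointwise bound
\[
\big|(\ti{\pi}_o(g)\vp|\psi)-\vp(\hat{g^{-1}.o})\psi(\hat{g.o})\big|\le\frac{C}{(1+|g.o|_o)^{1/D_\e}}
\]
and then average---is exactly the paper's, and your Step~3 is in fact cleaner than the paper's: the paper splits $\{|X_n|_o\le sn\}$ versus $\{|X_n|_o\ge sn\}$ and controls the small part via the Kersten spectral gap $\mu^{*n}(g)\preceq\rho^n$, whereas your drift-plus-dominated-convergence argument achieves the same conclusion with less machinery.

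The weakness is your Step~1. A change of variables in the single integral $\int\vp(g^{-1}\xi)\psi(\xi)u_g(\xi)\,d\nu(\xi)$ cannot produce a double integral against $u_g\otimes u_{g^{-1}}$; that sentence does not parse, and you effectively concede the point in your ``Main obstacle'' paragraph without resolving it. The paper avoids this detour entirely by the algebraic identity
\[
(\ti{\pi}_o(g)\vp|\psi)-\vp(\widecheck g)\psi(\hat g)
=(\ti{\pi}_o(g)\vp\,|\,\psi-\psi(\hat g){\bf1}_{\dd X})
-\psi(\hat g)\,(\vp(\widecheck g){\bf1}_{\dd X}-\vp\,|\,\ti{\pi}_o(g^{-1}){\bf1}_{\dd X}),
\]
which uses only that $\ti{\pi}_o(g){\bf1}_{\dd X}=u_g$ and that $(\ti\pi_o(g)\vp|{\bf1}_{\dd X})=(\vp|u_{g^{-1}})$ by unitarity. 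Each term is then bounded by $\|\vp\|_\infty\cdot|(\psi-\psi(\hat g){\bf1}_{\dd X}\,|\,u_g)|$ and $\|\psi\|_\infty\cdot|(\vp-\vp(\widecheck g){\bf1}_{\dd X}\,|\,u_{g^{-1}})|$, and Lemma~\ref{lem:l2proximal} (in its one-variable form) gives the $\frac{\gl(\psi)}{(1+|g.o|_o)^{1/D_\e}}$ and $\frac{\gl(\vp)}{(1+|g.o|_o)^{1/D_\e}}$ bounds directly. No comparison between the square-root kernel and the tensor density $u_g\otimes u_{g^{-1}}$ is ever needed, so the ``obstacle'' you flag simply does not arise. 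Your target estimate in Step~1 happens to be true, but proving it amounts to redoing this same decomposition after an unnecessary excursion; replace Steps~1--2 by the identity above and the argument is complete.
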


This Proposition \ref{prop:coef} together with Theorem \ref{thm:equi} implies:
\begin{cor}
Given any Lipschitz functions $w,\vp, \psi\in\lips(\dd X,d_{o,\e})$ one has:
$$\BE^\mu[w(\hat{X_n.o})(\ti{\pi}_o(X_n)\vp|\psi)]\rightarrow \int_{\dd X}\vp\,d{\nu}.\int_{\dd X}w.\psi\,d{\nu}$$
when $n$ goes to infinity.
\end{cor}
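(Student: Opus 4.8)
The plan is to deduce the statement directly by combining Proposition \ref{prop:coef} with Theorem \ref{thm:equi}, the former serving as a bridge between the renormalized matrix coefficient $(\ti{\pi}_o(X_n)\vp|\psi)$ and the product of boundary evaluations $\vp(\hat{X_n^{-1}.o})\psi(\hat{X_n.o})$.

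First I would split, for each $n\ge 0$,
$$\BE^\mu[w(\hat{X_n.o})(\ti{\pi}_o(X_n)\vp|\psi)]=\BE^\mu[w(\hat{X_n.o})\vp(\hat{X_n^{-1}.o})\psi(\hat{X_n.o})]+R_n,$$
where by Proposition \ref{prop:coef} the remainder satisfies
$$|R_n|\le\BE^\mu[|w(\hat{X_n.o})(\ti{\pi}_o(X_n)\vp|\psi)-w(\hat{X_n.o})\vp(\hat{X_n^{-1}.o})\psi(\hat{X_n.o})|]\xrightarrow{n\rightarrow\infty}0.$$
Hence it suffices to identify the limit of the main term.

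Next, since $w$ and $\psi$ are Lipschitz on $(\dd X,d_{o,\e})$, so is their product $w\psi$; in particular $w\psi\in C^0(\dd X)$ and $\vp\in C^0(\dd X)$. Applying Theorem \ref{thm:equi} with $\vp_1=w\psi$ and $\vp_2=\vp$, keeping the fixed boundary retraction $f$ for both functions, yields
$$\BE^\mu[(w\psi)(\hat{X_n.o})\vp(\hat{X_n^{-1}.o})]=\sum_{g\in\Ga}(w\psi)(\hat{g.o})\vp(\hat{g^{-1}.o})\mu^{*n}(g)\xrightarrow{n\rightarrow\infty}\int_{\dd X}w\psi\,d\nu\int_{\dd X}\vp\,d\nu.$$
Adding the contribution $R_n\rightarrow 0$ then gives
$$\BE^\mu[w(\hat{X_n.o})(\ti{\pi}_o(X_n)\vp|\psi)]\xrightarrow{n\rightarrow\infty}\int_{\dd X}\vp\,d\nu\int_{\dd X}w\psi\,d\nu,$$
which is the asserted convergence.

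There is no genuine obstacle beyond what is already in place: the analytic substance resides in Proposition \ref{prop:coef} (itself built on the $L^2$-proximality of the approximate identities $u_g$ from Lemma \ref{lem:l2proximal} together with the Harish-Chandra estimate Lemma \ref{lem:est}) and in the equidistribution Theorem \ref{thm:equi}, and the corollary is their formal juxtaposition through a single triangle inequality. The only subtlety worth flagging is that Proposition \ref{prop:coef} provides convergence to zero of the \emph{expectation of the absolute value} of the difference, which is precisely the form needed to dominate $R_n$ and to interchange limit and expectation.
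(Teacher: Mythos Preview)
Your proof is correct and follows exactly the approach indicated in the paper: the corollary is stated there as an immediate consequence of Proposition \ref{prop:coef} together with Theorem \ref{thm:equi}, and your triangle-inequality splitting into the main equidistribution term plus the remainder $R_n$ is precisely the intended argument.
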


\begin{proof}[Proof Proposition \ref{prop:coef}]
Let $w,\vp,\psi\in\lips(\dd X,d_{o,\e})$ be three Lipschitz functions.

Take $0<s<1$ such that $\rho_\mu\gd_\Ga^s<1$, where $\rho_\mu<1$ is the spectral radius of $\mu$ and $\gd_\Ga$ the growth rate of $\Ga$.\,% or the critical exponent of $\Ga$ when $\Ga$ acts geometrically on $(X,d)$.
Observe that:
\begin{align*}
\BP(|X_n|_o\le sn)&=\mu^{*n}(B_X(o,sn))\\
&\le \max\{\mu^{*n}(e),\mu^{*(n-1)}(e)\}|B_X(o,sn)\cap\Ga.o|\\
&\preceq \rho_\mu^n\gd_\Ga^{sn}=(\rho_\mu\gd_\Ga^s)^n
\end{align*}
for all $n$.\\
On the other hand:
\begin{align*}
&\sum_{g;\,|g.o|_o\ge sn}|w(\hat{g.o})(\ti{\pi}_o(g)\vp|\psi)-w(\hat{g.o})\vp(\hat{g^{-1}.o}).\psi(\hat{g.o})|\mu^{*n}(g)\\
&=\sum_{g;\,|g.o|_o\ge sn}|w(\hat{g.o})(\ti{\pi}_o(g)\vp|\psi-\psi(\hat{g.o}){\bf1}_{\dd X})\\
&\quad-w(\hat{g.o})(\vp(\hat{g^{-1}.o}){\bf1}_{\dd X}-\vp|\ti{\pi}(g^{-1}){\bf1}_{\dd X}).\psi(\hat{g.o})|.\mu^{*n}(g)\\
&\le \|w\|_\infty\|\vp\|_\infty\sum_{g;\,|g.o|_o\ge sn}|(\psi-\psi(\hat{g.o}){\bf1}_{\dd X}|u_g)|.\mu^{*n}(g)\\
&+\|w\|_\infty\|\psi\|_\infty\sum_{g;\,|g.o|_o\ge sn}|(\vp(\hat{g^{-1}.o}){\bf1}_{\dd X}-\vp|u_{g^{-1}})|.\mu^{*n}(g)\\
&\le \|w\|_\infty\frac{\|\vp\|_\infty\gl(\psi)+\|\psi\|_\infty\gl(\vp)}{(1+ sn)^{1/D_\e}}
\end{align*}
The last inequality is a consequence of Lemma \ref{lem:l2proximal}.
It follows that:
\begin{align*}
&\BE^\mu[|w(\hat{X_n.o})(\ti{\pi}_o(X_n)\vp|\psi)-w(\hat{X_n.o})\vp(\hat{X_n^{-1}.o}).\psi(\hat{X_n.o})|]\\
&=\sum_{g\in\Ga}|w(\hat{g.o})(\ti{\pi}_o(g)\vp|\psi)-w(\hat{g.o})\vp(\hat{g^{-1}.o}).\psi(\hat{g.o})|\mu^{*n}(g)\\
&\le\sum_{g;\,|g.o|_o\ge sn}|w(\hat{g.o})(\ti{\pi}_o(g)\vp|\psi)-w(\hat{g.o})\vp(\hat{g^{-1}.o}).\psi(\hat{g.o})|\mu^{*n}(g)\\
&+\BP(|X_n|_o\le sn).\|w(\hat{g.o})(\ti{\pi}_o(g)\vp|\psi)-w(\hat{g.o})\vp(\hat{g^{-1}.o}).\psi(\hat{g.o})\|_{\ell^\infty(\Ga)}\\
&\preceq\|w\|_\infty\frac{\|\vp\|_\infty\gl(\psi)+\|\psi\|_\infty\gl(\vp)}{(1+sn)^{1/D_\e}}+(\rho\gd_\Ga^s)^n\|w\|_\infty\|\vp\|_\infty\|\psi\|_\infty
\end{align*}
which converges to $0$ when $n$ goes to infinity. 
\end{proof}

The last ingredient of Theorem \ref{thm:flipap} proof and thus Corollary \ref{cor:irr} is the uniform boundedness of the family of operators $\{\mathcal{P}_n\mid n\ge0\}$ introduced in its statement.
This is subject of the next subsection.
Assuming this fact let us conclude the proof of Theorem \ref{thm:flipap}:

\begin{proof}[Proof of Theorem \ref{thm:flipap}]
Since $(\mathcal{P}_n)_n$ are uniformly bounded it is enough to prove Theorem \ref{thm:flipap} for $\vp,\psi\in\lips(\dd X,d_{o,\e})$ which span a dense subspace in $L^2[\dd X,\nu]$.\\
Observe that:
\begin{align*}
&|(\mathcal{P}_n\vp|\psi)-\int\vp d\nu\int\psi d\nu|=|\BE^\mu[(\tilde{\pi}_o(X_n)\vp|\psi)]-\int_{\dd X}\vp\,d\nu\int_{\dd X}\psi\,d\nu|\\
&\le |\BE^\mu[(\tilde{\pi}_o(X_n)\vp|\psi)]-\BE^\mu[\vp(\hat{X_n.o}).\psi(\hat{X_n^{-1}.o})]|\\
&+|\BE^\mu[\vp(\hat{X_n.o}).\psi(\hat{X_n^{-1}.o})]-\int_{\dd X}\vp\,d\nu\int_{\dd X}\psi\,d\nu|\\
\end{align*}

On one hand Theorem \ref{thm:equi} guarantee that:
$$\BE^\mu[\vp(\hat{X_n.o}).\psi(\hat{X_n^{-1}.o})]\xrightarrow{n\rightarrow+\infty}\int_{\dd X}\vp\,d\nu\int_{\dd X}\psi\,d\nu.$$

On the other hand Proposition \ref{prop:coef} guarantees:
$$\BE^\mu[(\tilde{\pi}_o(X_n)\vp|\psi)]\xrightarrow{n\rightarrow+\infty}\BE^\mu[\vp(\hat{X_n.o}).\psi(\hat{X_n^{-1}.o})]|$$
This concludes the proof of Theorem \ref{thm:flipap}.
\end{proof}
%%%%%%%%%%%%%%%%%%%%%UNIFORM BOUNDENESS %%%%%%%%%%%%%%%%%%%%%%
%%%%%%%%%%%%%%%%%%%%%UNIFORM BOUNDENESS %%%%%%%%%%%%%%%%%%%%%%
%%%%%%%%%%%%%%%%%%%%%UNIFORM BOUNDENESS %%%%%%%%%%%%%%%%%%%%%%

\subsubsection{Uniform boundedness of the average sequence $(\mathcal{P}_n)_n$.}
We start this section with several technical preliminary results and conclude the uniform boundedness in Proposition \ref{prop:corb} and Corollary \ref{cor:corb}
\begin{lem}\label{lem:intcomp}
Let $(\dd X,d_{o,\e},\nu)$ be the Gromov boundary of $(X,d)$ together with $\nu$ a $D_\e$-Ahlfors regular probability measure.
Then 
$$\int_{\{\eta|\,(\eta|\xi)\le k\}} d_{o,\e}^{-D_\e}(\xi,\eta)d{\nu}(\eta)\asymp1+\e D_\e.k$$
for any $k\ge0$.
%$$\int_{\dd X\setminus B(\xi,T)} d_{o,\e}^{-D_\e}(\xi,\eta)d{\nu}(\eta)\asymp1+D_\e\log{T^{-1}}$$
%for all $0<T\le1$ and $\xi\in\dd X$.\\
%In particular 
 %%%to verify
\end{lem}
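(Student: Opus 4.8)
The plan is to estimate the integral by decomposing the domain $\{\eta : (\eta|\xi)_o \le k\}$ into annuli on which the Gromov product is roughly constant, and to use the Ahlfors-regularity of $\nu$ to control the $\nu$-mass of each annulus. First I would recall the standard dictionary between the Gromov product and the visual distance: $d_{o,\e}(\xi,\eta) = \rho_{\e,o}(\xi,\eta) \asymp e^{-\e(\xi|\eta)_o}$ (up to the bi-Lipschitz constant $\lambda$ coming from the fact that $d_{o,\e}$ is a visual metric), so that the condition $(\eta|\xi)_o \le k$ is, up to an additive constant, the same as $d_{o,\e}(\xi,\eta) \ge c\,e^{-\e k}$ for a fixed $c>0$. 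Writing $r_0 = \mathrm{Diam}(\dd X)$ and $r_k \asymp e^{-\e k}$, the domain of integration is, up to bounded multiplicative error, the annulus $\{r_k \le d_{o,\e}(\xi,\eta) \le r_0\}$.

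Next I would dyadically decompose this annulus: set $A_j = \{\eta : 2^{-j-1} r_0 < d_{o,\e}(\xi,\eta) \le 2^{-j} r_0\}$ for $0 \le j \le j_k$, where $j_k \asymp \e k / \ln 2$ is the largest index with $2^{-j_k} r_0 \gtrsim r_k$. On $A_j$ the integrand $d_{o,\e}^{-D_\e}(\xi,\eta)$ is comparable to $(2^{-j} r_0)^{-D_\e}$, and by Ahlfors-regularity $\nu(A_j) \le \nu(B(\xi, 2^{-j} r_0)) \le C (2^{-j} r_0)^{D_\e}$. Hence the contribution of each annulus is $\asymp 1$, uniformly in $j$, and summing over the $j_k + O(1)$ annuli gives
\begin{align*}
\int_{\{(\eta|\xi)\le k\}} d_{o,\e}^{-D_\e}(\xi,\eta)\,d\nu(\eta) \preceq j_k + 1 \asymp 1 + \e D_\e k.
\end{align*}
Wait — the factor $D_\e$ in the claimed answer deserves care: it appears because the exponent matching the Ahlfors dimension makes each annulus contribute a constant, and the \emph{number} of annuli between radius $r_0$ and radius $e^{-\e k}$ is $\asymp \e k / \ln 2$; absorbing the normalization of $\nu$ and $r_0$ one matches the stated form $1 + \e D_\e k$. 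For the lower bound I would use the lower Ahlfors bound $\nu(B(\xi,r)) \ge C^{-1} r^{D_\e}$ to get $\nu(A_j) \asymp (2^{-j}r_0)^{D_\e}$ provided $2^{-j} r_0 < r_0$, so that each $A_j$ contributes at least a fixed positive constant, and there are at least $\asymp \e k$ of them.

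The main obstacle I anticipate is not conceptual but bookkeeping: keeping track of the various multiplicative constants ($\lambda$ from the visual metric comparison, the hyperbolicity constant $\e' = e^{2\gd\e}-1$ entering the quasi-ultrametric inequality for $\rho_{\e,o}$, and the Ahlfors constant $C$) and making sure the \emph{additive} constant in $(\eta|\xi)_o \le k \iff d_{o,\e}(\xi,\eta) \gtrsim e^{-\e k}$ only shifts the count of annuli by $O(1)$, which is harmless since it is absorbed in the ``$1+$'' term. One should also check the degenerate regime $k = O(1)$, where the domain is essentially all of $\dd X$ and both sides are $\asymp 1$, and the regime where $e^{-\e k}$ drops below the ``resolution'' where $\dd X$ could in principle be a single point — but non-elementarity guarantees $\dd X$ is perfect (indeed uncountable), so this does not occur. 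With these points addressed the estimate $\int_{\{(\eta|\xi)\le k\}} d_{o,\e}^{-D_\e}(\xi,\eta)\,d\nu(\eta) \asymp 1 + \e D_\e k$ follows.
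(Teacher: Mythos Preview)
Your proposal is correct and follows essentially the same approach as the paper: both arguments use Ahlfors regularity to show that each ``scale'' (your dyadic annulus $A_j$, the paper's level set $\{(\xi|\cdot)>t\}$) contributes a uniformly bounded amount to the integral, and then count $\asymp \e k$ such scales. The only difference is cosmetic---the paper writes this via the continuous layer-cake formula $\int f\,d\nu = 1 + \e D_\e\int_0^k e^{\e D_\e t}\,\nu(\{(\xi|\cdot)>t\})\,dt$ and evaluates the integrand as $\asymp 1$, whereas you discretize into dyadic shells; the content is identical.
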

\begin{proof}
Using the $D_\e$-Ahlfors-regularity of $\nu$ observe that: %together with the relation $\e D_\e=\gd_\Ga$ one has:
\begin{align*}
\int_{\{\eta|\,(\eta|\xi)\le k\}} &d_{o,\e}^{-D_\e}(\xi,\eta)d{\nu}(\eta)\\
&=1+\e D_\e\int_{0}^{k} e^{\e D_\e t}.\nu_0(\{(\xi|.)>t\})dt\\
&=1+\e D_\e\int_{0}^{k}e^{\e D_\e t}.\nu_0(\{d_{o,\e}(\xi,.)<e^{-\e t }\})dt\\
&\asymp1+\e D_\e\int_{0}^{k}e^{\e D_\e t}.e^{-\e D_\e t }dt=1+\e D_\e k.
\end{align*}
%Given $0<T\le1$ one has:
%\begin{align*}
%\int_{\dd X\setminus B(\xi,T)} &d_{o,\e}^{-D_\e}(\xi,\eta)d{\nu}(\eta)\\
%&=\int_{\text{Diam($\dd X$)}^{-1}}^{T^{-1}}D_\e t^{D_\e-1}.\nu_0(\{d_{o,\e}^{-1}(\xi,.)>t\})dt\\
%&+\text{Diam($\dd X$)}^{D_\e}\\
%&=\int_{1}^{T^{-1}}D_\e t^{D_\e-1}.\nu_0(\{d_{o,\e}(\xi,.)<t^{-1}\})dt+1\\
%&\asymp\int_{1}^{T^{-1}}D_\e t^{D_\e-1}.t^{-D_e}dt=1-D_\e\log{T}.
%\end{align*}
%where the 3rd equality use the Ahlfors $D_\e$-regularity.\\
%Eventually since $(\eta|\xi)\le k$ iff $\rho_{o,\e}(\eta,\xi)\le e^{-\e k}$ observe that:
%\begin{align*}
%\int_{\{\eta|\,(\eta|\xi)\le k\}} d_{o,\e}^{-D_\e}(\xi,\eta)d{\nu}(\eta)&\asymp \int_{\dd X\setminus B(\xi,e^{-\e k})} d_{o,\e}^{-D_\e}(\xi,\eta)d{\nu}(\eta)\\
%&\asymp1+\e D_\e k=1+\gd_\Ga k
%\end{align*}
\end{proof}

\begin{lem}\label{lem:ctrinf}
Let $C\ge \gd+2$ be a positive constant.\\
There exists $C'>0$, for any pair of distinct points, $\eta_0\neq\eta'_0$, in the boundary $\dd X$ of $X$ with $C\ge(\eta_0|\eta'_0)_o+\gd+1$ the two neighborhoods of $\eta_0$ and $\eta'_0$ given respectively by:
$$U(\eta_0,C)=U=\{x\in \ol{X}\mid (x|\eta_0)_o\ge C\}$$ and $$U(\eta'_0,C)=U'=\{x\in \ol{X}\mid (x|\eta'_0)_o\ge C\}$$
satisfy $U\cap U'=\emptyset$ and for any $\eta\in \dd X\cap U$, $\eta'\in \dd X\cap U'$ the following inequality holds:
$$(g.o|\xi)_o\le \max\{(g.\eta|\xi)_o,(g.\eta'|\xi)_o\}+C'$$
for all $g\in \Ga$ and $\xi\in\dd X$.
\end{lem}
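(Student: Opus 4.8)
The plan is to produce the disjointness of the neighborhoods $U, U'$ directly from the quasi-ultrametric inequality, and then to establish the key inequality by a case analysis on the location of $\xi$ relative to $g.\eta$ and $g.\eta'$. First I would verify $U\cap U' = \emptyset$: if $x\in U\cap U'$ then the extended Gromov product inequality (property (2) in the list, valid on $\ol X$ with constant $2\gd$) gives $(\eta_0|\eta'_0)_o \ge \min\{(x|\eta_0)_o,(x|\eta'_0)_o\} - 2\gd \ge C - 2\gd$, which with the standing hypothesis $C \ge (\eta_0|\eta'_0)_o + \gd + 1$ yields $(\eta_0|\eta'_0)_o \ge (\eta_0|\eta'_0)_o + \gd + 1 - 2\gd$, i.e.\ $\gd \ge \gd+1$, a contradiction. (If the intended constant is $2\gd$ rather than $\gd$ one simply strengthens the numerical hypothesis accordingly; the structure is the same.)

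Next, for the main inequality, fix $g\in\Ga$ and $\xi\in\dd X$ and fix $\eta\in\dd X\cap U$, $\eta'\in\dd X\cap U'$. The crucial geometric observation is that $o$ lies "between" $\eta_0$ and $\eta'_0$ in the sense that $(\eta_0|\eta'_0)_o$ is small (bounded by $C - \gd - 1$), and since $\eta$ is close to $\eta_0$ and $\eta'$ close to $\eta'_0$ (both at Gromov distance $\ge C$ from their centers), the product $(\eta|\eta')_o$ is also bounded — indeed by hyperbolicity $(\eta|\eta')_o \ge \min\{(\eta|\eta_0)_o, (\eta_0|\eta'_0)_o, (\eta'_0|\eta')_o\} - 4\gd$ cannot be used for an upper bound, so instead one argues: if $(\eta|\eta')_o$ were large, say $> C$, then $\eta \in U'$ as well (since $(\eta|\eta'_0)_o \ge \min\{(\eta|\eta')_o, (\eta'|\eta'_0)_o\} - 2\gd$), contradicting $U\cap U' = \emptyset$ once the numerical constants are chosen with enough slack. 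So $(\eta|\eta')_o \le C''$ for a constant $C''$ depending only on $C$ and $\gd$. Applying $g$, which acts by isometries and hence preserves all Gromov products, we get $(g\eta|g\eta')_o = (\eta|\eta')_{g^{-1}o}$; translating the basepoint back to $o$ costs at most $|g.o|_o$, but we do not want that — instead one uses $(g\eta | g\eta')_o \ge (g\eta|g\eta')_{g^{-1}o} - $ (basepoint change)...

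Actually the cleaner route: apply the hyperbolic inequality at basepoint $o$ to the triple $g.o, g\eta, \xi$ and to the triple $g.o, g\eta', \xi$, then combine. We have $(g.o|\xi)_o \ge \min\{(g.o|g\eta)_o, (g\eta|\xi)_o\} - 2\gd$. If the minimum is realized by $(g\eta|\xi)_o$ we are done (with $C' = 2\gd$). Otherwise $(g.o|\xi)_o \ge (g.o|g\eta)_o - 2\gd$, and symmetrically using $\eta'$, either we are done via $(g\eta'|\xi)_o$ or $(g.o|\xi)_o \ge (g.o|g\eta')_o - 2\gd$. So the remaining case is $(g.o|\xi)_o \ge \max\{(g.o|g\eta)_o, (g.o|g\eta')_o\} - 2\gd$, and here I would derive a contradiction or an absolute bound: $(g.o|g\eta)_o = (o|\eta)_{g^{-1}o}$ and similarly for $\eta'$; but $(o|\eta)_{g^{-1}o} + (o|\eta')_{g^{-1}o} \ge \dots$ — using that $\eta,\eta'$ lie in disjoint shadow-type sets, the sum $(g.o|g\eta)_o + (g.o|g\eta')_o$ is controlled by $|g.o|_o + (\eta|\eta')_o$-type terms, hence by the bound $(\eta|\eta')_o\le C''$ one of $(g.o|g\eta)_o, (g.o|g\eta')_o$ must be $\le \half|g.o|_o + $ const, while the other side of the displayed inequality... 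The hard part will be exactly this last case: showing that one cannot have $(g.o|\xi)_o$ exceed both $(g\eta|\xi)_o$ and $(g\eta'|\xi)_o$ by more than a constant, which amounts to the statement that $\xi$ cannot "see" the orbit point $g.o$ along a direction that avoids both $g\eta$ and $g\eta'$ — this is where the disjointness of $U,U'$ (now translated by $g$, so disjointness of $gU$ and $gU'$ as neighborhoods of $g\eta_0, g\eta'_0$) must be used together with the fact that $g.o$ lies in the "gap" between $g\eta_0$ and $g\eta'_0$. Concretely: from $U\cap U'=\emptyset$ one shows $\min\{(g.o|g\eta_0)_o,(g.o|g\eta'_0)_o\}$ is essentially $(g\eta_0|g\eta'_0)_o$ up to $\gd$, hence bounded; combined with the triangle-type inequality $(g.o|\xi)_o \le (g\eta_0|\xi)_o + (g.o|g\eta_0)_o + 2\gd$ (rearranged hyperbolic inequality), and the near-equality $(g\eta_0|\xi)_o \approx (g\eta|\xi)_o$ whenever... this forces the bound with $C'$ depending only on $C, \gd$. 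I would close by collecting all the additive constants accrued through the (finitely many) applications of the $2\gd$-inequality and the $C''$ bound into a single $C'$, and noting it depends only on $C$ and $\gd$, not on $g$ or $\xi$, as claimed.
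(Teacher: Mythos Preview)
Your disjointness argument is fine and matches the paper's. Your overall strategy for the main inequality --- case split via the hyperbolic inequality applied to the triples $(g.o, g\eta, \xi)$ and $(g.o, g\eta', \xi)$, reducing to the ``remaining case'' where $(g.o|\xi)_o$ dominates both $(g.o|g\eta)_o$ and $(g.o|g\eta')_o$ --- is exactly the paper's strategy. But the execution of that remaining case contains a genuine error and does not close.

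The slip is a reversed inequality. From $(\eta|\eta')_o \le C''$ and the hyperbolic inequality one gets $\min\{(g^{-1}.o|\eta)_o,\,(g^{-1}.o|\eta')_o\} \le C'' + O(\gd)$. Combined with the basepoint identity $(g.o|g\eta)_o = (o|\eta)_{g^{-1}.o} \approx |g.o|_o - (g^{-1}.o|\eta)_o$, this says that one of $(g.o|g\eta)_o,\,(g.o|g\eta')_o$ is \emph{large}, namely $\ge |g.o|_o - C'' - O(\gd)$ --- not $\le \half|g.o|_o + \text{const}$ as you wrote. That reversal derails what follows, and the ``triangle-type inequality'' $(g.o|\xi)_o \le (g\eta_0|\xi)_o + (g.o|g\eta_0)_o + 2\gd$ you then invoke is simply false for Gromov products.

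Once the sign is fixed the argument closes in one line, and this is what the paper does. Say $(g.o|g\eta)_o \ge |g.o|_o - C'' - O(\gd)$. Since always $(g.o|\xi)_o \le |g.o|_o$, this gives $(g.o|g\eta)_o \ge (g.o|\xi)_o - C'' - O(\gd)$. Now the hyperbolic inequality $(g\eta|\xi)_o \ge \min\{(g.o|g\eta)_o,\,(g.o|\xi)_o\} - O(\gd)$ yields $(g\eta|\xi)_o \ge (g.o|\xi)_o - C'$ in either branch of the minimum. The paper packages the first step slightly differently --- observing that $U\cap U'=\emptyset$ forces $\ol X = U^c \cup {U'}^c$, so $g^{-1}.o$ lies in one complement and hence $(g^{-1}.o|\eta)_o$ (or $(g^{-1}.o|\eta')_o$) is bounded by $C$ directly --- but this is the same observation as your $\min$-bound via $(\eta|\eta')_o\le C''$.
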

\begin{proof}
Assume there exists $x\in U\cap U'$. 
Then one has:
$$(\eta_0|\eta'_0)_o\ge\min\{(x|\eta_0)_o,(x|\eta'_0)_o\}-\gd\ge C-\gd\ge(\eta_0|\eta'_0)_o+1$$
which is a contradiction, thus $U\cap U'=\emptyset$ and in particular $\ol{X}=U^c\cup {U'}^c$.\\

Let $g\in\Ga$ and assume $g^{-1}.o\in {U'}^c$.
One might also assume that $(g.o|\xi)_o\ge (g.\eta|g.o)_o$, otherwise the inequality:
$$(g.\eta|\xi)_o\ge\min\{(g.\eta|g.o)_o,(g.o|\xi)_o\}-\gd=(g.o|\xi)_o-\gd$$
guarantees the second part of the lemma with $C'=\gd$.\\

Since $d(o,g^{-1}.o)-(o|\eta)_{g^{-1}.o}\approx(g^{-1}.o|\eta)_o$ and $g^{-1}.o\in {U'}^c$ which implies that $(g^{-1}.o|\eta)_o\le C$, one has:
\begin{align*}
(g.o|\xi)_o-C&\le |g.o|_o-C\le d(o,g^{-1}.o)-(g^{-1}.o|\eta)_o\\
&\approx(o|\eta)_{g^{-1}.o}=(g.o|g.\eta)_o\le (g.\eta|\xi)_o+\gd
\end{align*}
where the last inequality is a consequence of the hyperbolic inequality:
$$(g.\eta|\xi)_o\ge\min\{(g.o|g.\eta)_o,\,(g.o|\xi)_o\}-\gd$$
 together with our assumptions.
Eventually the lemma follows by taking $C+\gd\preceq C'$ where the extra additive constant in the choice of $C'$ only depend on the geometry on $(X,d)$.%which only depend on $\eta,\eta'\in\dd X$.
\end{proof}

%A direct consequence of Lemme \ref{lem:ctrinf} is the following:
\begin{cor}\label{cor:ctrinf}
Let $C\ge\gd+2$ be a fixed constant.
Following Lemma \ref{lem:ctrinf} notations, let $C'$ and $\eta\neq \eta'$ two distinct boundary points that belong respectively to $U$ and $U'$.\\
There exists $\gl>0$ which only depends on $C$ such that:
\begin{align*}
\BE^\mu[\tilde{\pi}_o(X_n)]&{\bf 1}_{\dd X}(\xi)\asymp\sum_{g\in\Ga}\frac{e^{\e D_\e(g.o|\xi)_o}}{1+|g.o|_o}\mu^{*n}(g)\\
&\le \gl\sum_{g\in\Ga}[f(g\eta,\xi)+f(g\eta',\xi)]\mu^{*n}(g)\\
%&+f(g\eta',\xi)f(g^{-1}\eta,\xi')+f(g\eta',\xi)f(g^{-1}\eta',\xi')]\mu^{*n}(g)
\end{align*}
for all $\xi\in\dd X$ and $n\ge0$ with
$$f:\Ga\times\dd X\times\dd X\rightarrow \BR_+;\quad (g,\eta,\xi)\mapsto \frac{d^{-D_\e}(\eta,\xi)}{1+|g.o|_o}$$
\end{cor}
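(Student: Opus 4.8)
The strategy is to combine the estimate on the Harish-Chandra function from Lemma \ref{lem:est} with the geometric domination furnished by Lemma \ref{lem:ctrinf}, and then to bound the surviving exponential by the Ahlfors-regular metric kernel $d^{-D_\e}$. First I would recall that by definition of the boundary representation and of the approximate-Dirac densities $u_g$, one has
$$\BE^\mu[\tilde\pi_o(X_n)]{\bf 1}_{\dd X}(\xi)=\sum_{g\in\Ga}\frac{\sqrt{r_o(g^{-1},\xi)}}{\Xi_o(g)}\mu^{*n}(g),$$
and using $r_o(g^{-1},\xi)\asymp e^{\e D_\e\gb_\xi(g^{-1}.o)}=e^{\e D_\e(2(\xi|g.o)_o-|g.o|_o)}$ together with Lemma \ref{lem:est} (which gives $\Xi_o(g)\asymp(1+|g.o|_o)e^{-\frac12\e D_\e|g.o|_o}$), the summand simplifies to $\frac{e^{\e D_\e(g.o|\xi)_o}}{1+|g.o|_o}\mu^{*n}(g)$ up to multiplicative constants. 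This is the first displayed $\asymp$.

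Next I would fix $C\ge\gd+2$ and choose $\eta\neq\eta'$ in the disjoint neighborhoods $U=U(\eta_0,C)$, $U'=U(\eta'_0,C)$ as in Lemma \ref{lem:ctrinf}; that lemma produces a constant $C'$, depending only on $C$, with
$$(g.o|\xi)_o\le\max\{(g.\eta|\xi)_o,(g.\eta'|\xi)_o\}+C'\qquad\text{for all }g\in\Ga,\ \xi\in\dd X.$$
Applying $e^{\e D_\e(\cdot)}$ and using $\max\{a,b\}\le a+b$ on the right, I get
$$\frac{e^{\e D_\e(g.o|\xi)_o}}{1+|g.o|_o}\le e^{\e D_\e C'}\,\frac{e^{\e D_\e(g.\eta|\xi)_o}+e^{\e D_\e(g.\eta'|\xi)_o}}{1+|g.o|_o}.$$
Finally, since $\eta,\eta'\in\dd X$, I can rewrite each numerator in terms of the visual metric: $e^{\e(g.\eta|\xi)_o}\asymp d_{o,\e}(g\eta,\xi)^{-1}$ (this is just the relation $d_{o,\e}=\rho_{\e,o}$ recalled in the Busemann subsection), hence $e^{\e D_\e(g.\eta|\xi)_o}\asymp d_{o,\e}^{-D_\e}(g\eta,\xi)=d^{-D_\e}(g\eta,\xi)$. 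Substituting and absorbing all multiplicative constants (the Patterson–Sullivan comparison constants, $e^{\e D_\e C'}$, and the visual-metric comparison constant, all of which depend only on $C$ and the geometry of $(X,d)$) into a single $\gl>0$ yields
$$\BE^\mu[\tilde\pi_o(X_n)]{\bf 1}_{\dd X}(\xi)\le\gl\sum_{g\in\Ga}\big[f(g\eta,\xi)+f(g\eta',\xi)\big]\mu^{*n}(g),$$
with $f(g,\eta,\xi)=\frac{d^{-D_\e}(\eta,\xi)}{1+|g.o|_o}$, as claimed. Summing against $\mu^{*n}$ is legitimate termwise since every bound above is pointwise in $g$.

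The only genuinely delicate point is the passage from $(g.o|\xi)_o$ to $\max\{(g.\eta|\xi)_o,(g.\eta'|\xi)_o\}$, but that is precisely the content of Lemma \ref{lem:ctrinf}, so here it is purely bookkeeping: one must check that the disjointness hypothesis $C\ge(\eta_0|\eta'_0)_o+\gd+1$ can indeed be met (take $\eta_0,\eta'_0$ any two distinct boundary points and $C$ large) and that the resulting $\gl$ depends only on $C$ — which it does, since $C'$ depends only on $C$ and the $\asymp$-constants are structural. The Ahlfors-regularity of $\nu$ is not used in the inequality itself; it enters only implicitly through the normalization $r_o(g^{-1},\xi)\asymp e^{\e D_\e\gb_\xi(g^{-1}.o)}$ and will be used in the sequel (Lemma \ref{lem:intcomp}) to integrate $f$.
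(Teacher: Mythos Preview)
Your proposal is correct and follows essentially the same route as the paper's own proof: apply Lemma~\ref{lem:ctrinf} to dominate $(g.o|\xi)_o$ by $\max\{(g.\eta|\xi)_o,(g.\eta'|\xi)_o\}+C'$, exponentiate, bound $e^{\e D_\e\max\{a,b\}}=\max\{e^{\e D_\e a},e^{\e D_\e b}\}\le e^{\e D_\e a}+e^{\e D_\e b}$, convert to the visual metric via $e^{\e D_\e(g.\eta|\xi)_o}\asymp d^{-D_\e}(g\eta,\xi)$, and sum against $\mu^{*n}$. Your additional justification of the first $\asymp$ via Lemma~\ref{lem:est} and the Busemann identity is a welcome clarification the paper leaves implicit; the only cosmetic remark is that the phrase ``using $\max\{a,b\}\le a+b$'' should be read as applied \emph{after} exponentiation (i.e.\ to the positive quantities $e^{\e D_\e a},e^{\e D_\e b}$), not to the Gromov products themselves.
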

\begin{proof}
Using Lemma \ref{lem:ctrinf} one has:
\begin{align*}
e^{\e D_\e(g.o|\xi)_o}&\le e^{\e D_\e[\max\{(g.\eta|\xi)_o,(g.\eta'|\xi)_o\}+C']}\\
&\le e^{\e D_\e C'}.[e^{\e D_\e(g.\eta|\xi)}+e^{\e D_\e(g.\eta'|\xi)}]\\
&\asymp e^{\e D_\e C'}.[d^{-D_\e}(g.\eta,\xi)+d^{-D_\e}(g.\eta',\xi)]
\end{align*}
for all $g\in\Ga$.
It follows that:
\begin{align*}
\frac{e^{\e D_\e(g.o|\xi)_o}}{1+|g.o|_o}&\le e^{\e D_\e C'}.[\frac{d^{-D_\e}(g.\eta,\xi)}{1+|g.o|_o}+\frac{d^{-D_\e}(g.\eta',\xi)}{1+|g.o|_o}]\\
\end{align*}
Averaging with respect to $\mu^{*n}$ proves the corollary with $\gl=e^{2\gd_\Ga C'}$.
\end{proof}

We can prove the principal ingredient of the uniform boundedness of the sequence of operators $(\mathcal{P}_n)_n$:
\begin{prop}\label{prop:corb}
The sequence of functions $(\BE^\mu[\tilde{\pi}_o(X_n)]{\bf1}_{\dd X})_n$ is uniformly bounded in $L^\infty(\dd X,\nu)$.
\end{prop}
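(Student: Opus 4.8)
The strategy is to bound $\BE^\mu[\tilde\pi_o(X_n)]{\bf1}_{\dd X}(\xi)$ uniformly in $\xi$ and $n$ by splitting the sum over $g\in\Ga$ according to the size of $|g.o|_o$, then using Corollary \ref{cor:ctrinf} to dominate the contribution of "long" elements by an integral of the kernel $f$ against the harmonic measure. First I would fix two distinct boundary points $\eta\neq\eta'$ satisfying the hypotheses of Lemma \ref{lem:ctrinf}, so that by Corollary \ref{cor:ctrinf}
$$\BE^\mu[\tilde\pi_o(X_n)]{\bf1}_{\dd X}(\xi)\preceq\sum_{g\in\Ga}\Bigl[\frac{d_{o,\e}^{-D_\e}(g\eta,\xi)}{1+|g.o|_o}+\frac{d_{o,\e}^{-D_\e}(g\eta',\xi)}{1+|g.o|_o}\Bigr]\mu^{*n}(g).$$
It suffices to bound each of the two sums; by symmetry I treat only the one involving $\eta$. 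The point is that $d_{o,\e}^{-D_\e}(g\eta,\xi)\asymp e^{\e D_\e(g\eta|\xi)_o}$, and by the hyperbolic inequality $(g\eta|\xi)_o$ is comparable to $(g.o|\xi)_o$ up to $|g.o|_o-(g.o|g\eta)_o$-type corrections, but crucially the \emph{Gromov product} $(g\eta|\xi)_o$ is controlled by how close $g\eta$ is to $\xi$, which for a generic walk position is not too large.

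Next I would exploit the Ahlfors regularity of $\nu$ via Lemma \ref{lem:intcomp}: for fixed $g$, integrating $d_{o,\e}^{-D_\e}(g\eta,\xi)$ against $d\nu(\xi)$ over the region $\{(g\eta|\xi)_o\le k\}$ gives something $\asymp 1+\e D_\e k$. The idea is to decompose the $\xi$-range dyadically in $(g\eta|\xi)_o$, but since we want an $L^\infty$ bound in $\xi$ (not an $L^1$ bound), the real control must come from the denominator $1+|g.o|_o$ together with the observation that $(g\eta|\xi)_o\le|g.o|_o+O(1)$ always (since $(g\eta|\xi)_o\le(g\eta|g.o)_o+\gd\asymp$ something bounded by $|g.o|_o$). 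Hence $\frac{d_{o,\e}^{-D_\e}(g\eta,\xi)}{1+|g.o|_o}\preceq\frac{e^{\e D_\e|g.o|_o}}{1+|g.o|_o}$, which is far too large termwise; the gain must be that for most $g$ (those with $|g.o|_o\asymp\ell n$), the product $(g\eta|\xi)_o$ is small for $\nu$-almost every relevant configuration — but since we need a pointwise-in-$\xi$ statement, instead I would integrate the whole inequality against a suitable test and use the bound $\mu^{*n}(g)\le\max\{\mu^{*n}(e),\mu^{*(n-1)}(e)\}\preceq\rho_\mu^n$ from the spectral gap (Lemma in \ref{subsub:drift}) to kill the exponential $e^{\e D_\e|g.o|_o}=\gd_\Ga^{|g.o|_o}$ on the ball $|g.o|_o\le sn$ with $\rho_\mu\gd_\Ga^s<1$, exactly as in the proof of Proposition \ref{prop:coef}.

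So the final structure is: (i) split $\sum_g = \sum_{|g.o|_o\le sn} + \sum_{|g.o|_o> sn}$; (ii) on the first piece, bound $e^{\e D_\e(g.o|\xi)_o}\le\gd_\Ga^{sn}$ and $\mu^{*n}(g)\preceq\rho_\mu^n$, and bound the number of terms by $|B_X(o,sn)\cap\Ga.o|\preceq\gd_\Ga^{sn}$ — wait, this gives $(\rho_\mu\gd_\Ga^{2s})^n$, so I would choose $s$ small enough that $\rho_\mu\gd_\Ga^{2s}<1$, making this piece go to $0$; (iii) on the second piece $|g.o|_o> sn$, use Corollary \ref{cor:ctrinf} and the bound $(g\eta|\xi)_o\le|g.o|_o+O(1)$ together with Lemma \ref{lem:intcomp} — but here the honest move is: since $d_{o,\e}^{-D_\e}(g\eta,\xi)/(1+|g.o|_o)$ is the density $u_{g^{-1}}$-type object, its integral against $\nu$ in $\xi$ is $\asymp 1$ by Lemma \ref{lem:intcomp} (with $k\asymp|g.o|_o$), giving $\int_{\dd X}\BE^\mu[\tilde\pi_o(X_n)]{\bf1}_{\dd X}\,d\nu\preceq 1$; to upgrade to $L^\infty$ one uses that $(g\eta|\xi)_o$ large forces $\xi$ into a ball of radius $e^{-\e(g\eta|\xi)_o}$ around $g\eta$, whose $\nu$-mass is summable against the walk measure. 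I would therefore estimate, for each $\xi$,
$$\sum_{g}\frac{e^{\e D_\e(g\eta|\xi)_o}}{1+|g.o|_o}\mu^{*n}(g)=\sum_{j\ge0}\sum_{g:\,j\le(g\eta|\xi)_o<j+1}\frac{e^{\e D_\e(g\eta|\xi)_o}}{1+|g.o|_o}\mu^{*n}(g)\preceq\sum_{j\ge0}\frac{e^{\e D_\e j}}{1+sn}\,\mu^{*n}\{g:\,(g\eta|\xi)_o\ge j\},$$
and bound $\mu^{*n}\{g:(g\eta|\xi)_o\ge j\}=\mu^{*n}\{g:\,d_{o,\e}(g\eta,\xi)\le e^{-\e j}\}$. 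This last quantity is the $\mu^{*n}$-probability that the walk, applied to $\eta$, lands $e^{-\e j}$-close to the fixed point $\xi$; since the harmonic-measure pushforwards $(X_n)_*\gd_\eta$ converge weakly to the diffuse measure $\nu$ (Lemma \ref{lem:proximal}, Proposition \ref{prop:hitting}), this is $\preceq\nu(B(\xi,Ce^{-\e j}))\asymp e^{-\e D_\e j}$ by Ahlfors regularity, up to controlled error — which exactly cancels the $e^{\e D_\e j}$ and leaves a geometric-type sum bounded by a constant times $1/(1+sn)\cdot(\text{number of relevant }j)$. The main obstacle is this step: making rigorous the estimate $\mu^{*n}\{g:\,d_{o,\e}(g\eta,\xi)\le r\}\preceq r^{D_\e}$ uniformly in $n$ and $\xi$, i.e. a quantitative, $n$-uniform version of weak convergence of $(X_n)_*\gd_\eta$ to $\nu$ with Ahlfors-type control; I expect this to follow from the shadow lemma for quasiconformal measures together with the deviation estimate $\BP(|X_n|_o\le sn)\preceq(\rho_\mu\gd_\Ga^s)^n$ used above, which confines the walk to a linear-sized ball and thus bounds the Gromov product $(g\eta|\xi)_o$ by $sn$-independent geometry off an exponentially small set. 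Once that is in hand, combining (ii) and (iii) gives $\sup_n\|\BE^\mu[\tilde\pi_o(X_n)]{\bf1}_{\dd X}\|_{L^\infty(\dd X,\nu)}<\infty$.
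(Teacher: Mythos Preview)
Your overall architecture is right and matches the paper: split according to $|g.o|_o$, kill the short range with the spectral gap estimate $\mu^{*n}(g)\preceq\rho_\mu^n$, and on the bulk invoke Corollary~\ref{cor:ctrinf} to replace $(g.o|\xi)_o$ by $(g\eta|\xi)_o$. The genuine gap is in your step (iii). The uniform estimate
\[
\mu^{*n}\{g:\,d_{o,\e}(g\eta,\xi)\le r\}\preceq r^{D_\e}\qquad\text{uniformly in }n,\xi,r
\]
is \emph{not} a consequence of weak convergence $(X_n)_*\gd_\eta\Rightarrow\nu$ plus Ahlfors regularity of the limit: weak convergence controls $\limsup_n$ of closed-ball masses, not a bound uniform in $n$ and in the radius. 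What you are asking for is a uniform Ahlfors upper regularity of all the pushforwards $(X_n)_*\gd_\eta$, which the paper never establishes and which would need an independent argument (it is essentially a quantitative local limit statement).

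The paper sidesteps this completely by integrating in the \emph{other} variable. The point of Lemma~\ref{lem:ctrinf} and Corollary~\ref{cor:ctrinf} is that the domination
\[
\frac{e^{\e D_\e(g.o|\xi)_o}}{1+|g.o|_o}\preceq f(g,\eta,\xi)+f(g,\eta',\xi)
\]
holds for \emph{every} $\eta\in U$ and $\eta'\in U'$. Hence it holds after averaging $\eta$ over $U$ against $d\nu$; extending the integral to all of $\dd X$ and using the $\mu$-stationarity $\mu^{*n}*\nu=\nu$ gives
\[
\sum_{g}\mu^{*n}(g)\,\frac{1}{\nu(U)}\int_{\dd X}f(g,\eta,\xi)\,d\nu(\eta)=\frac{1}{\nu(U)}\int_{\dd X}f(e,\eta,\xi)\,d\nu(\eta),
\]
and the right-hand side is exactly what Lemma~\ref{lem:intcomp} computes (after truncating the kernel at level $\asymp n$, which is legitimate once $|g.o|_o\le an$). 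This produces a bound $\asymp (1+an)/(1+a'n)$, uniform in $\xi$, with no need for your dyadic decomposition or the unproven regularity claim. A secondary point: to justify the truncation you also need an \emph{upper} cut $|X_n|_o\le an$, handled in the paper by the exponential Chebyshev bound coming from the finite support of $\mu$; your two-piece split omits this tail.
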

\begin{proof}
Let $\gl>\gd_\Ga$ and $\frac{\ln E}{[\gl-\gd]}\le a$ where $\gd_\Ga=\e D_\e$ denote the critical exponent of $\Ga$ and $E=\BE^\mu[e^{\gl |X_1|_o}]$, that is finite since $\mu$ is finitely supported.

Using exponential Chebyshev inequality:
\begin{align*}
e^{\gl a(n+k)}\BP(|X_n|_o\ge a(n+k))&\le \BE[e^{\gl |X_n|_o}]\\
&\le \BE[e^{\gl |X_1|_o}]^n=E^n\\
\end{align*}
and therefore:
\begin{align*}
e^{\gd a(n+k)}\BP(|X_n|_o\ge a(n+k))&\le e^{\gd a(n+k)}e^{-\gl a(n+k)}E^n\\
%&\le e^{\gd a(n+k)}e^{-\gl a(n+k)}\BE[e^{-\gl |X_1|_o}]^n\\
&=e^{[\gd-\gl]ak+[\ln E+a\gd-a\gl]n}
\end{align*}
%Take $\gd<\gl$ and $\frac{\ln E}{[\gl-\gd]}\le a$ and observe that:
It follows
\begin{align*}
\BE[\tilde{\pi}_o(X_n){\bf 1}_{|X_n|_o\ge an}]{\bf 1}_{\dd X}(\xi)&\preceq\sum_{k=0}^{+\infty}\sum_{g\in \Ga,\,a(n+k)\le|g|_o\le a(n+k+1)}\mu^{*n}(g)\frac{e^{\e D_\e(g.o|\xi)_o}}{1+|g|_o}\\
&\le\sum_{k=0}^{+\infty}\BP(|X_n|\ge a(n+k))e^{\e D_\e a(n+k+1)}\\
&\le e^{[\ln E+a\e D_\e-a\gl]n}\sum_k e^{[\gd-\gl]ak}=\frac{e^{[\ln E+a\e D_\e-a\gl]n}}{1-e^{[\e D_\e-\gl]a}}
\end{align*}

On the other hand:
\begin{align*}
\BE[\tilde{\pi}_o(X_n){\bf 1}_{|X_n|_o\le a'n}]{\bf 1}_{\dd X}(\xi)&=\sum_{g\in \Ga,\,|g|_o\le a'n}\mu^{*n}(g)\frac{e^{\e D_\e(g.o|\xi)_o}}{1+|g|_o}\\
&\le\mu^{*n}(e)\sum_{g\in \Ga,\,|g|_o\le a'n}\frac{e^{\e D_\e(g.o|\xi)_o}}{1+|g|_o}+\\
&=\rho^ne^{\gd a'n}=e^{[\ln\rho+\e D_\e a']n}
\end{align*}
with $\frac{-\ln\rho}{\e D_\e}\le a'$.

Let us denote:
$$f_k:\dd X\times\dd X\rightarrow \BR_+;\quad (\eta,\xi)\mapsto
\begin{cases}
\frac{d^{-D_\e}(\eta,\xi)}{1+\gd_\Ga k}&\text{for $(\xi|\eta)_o\le k$}\\
\frac{\exp({\gd_\Ga k})}{1+\gd_\Ga k}&\text{otherwise}
\end{cases}$$
for $k\ge1$.
Note that Lemma \ref{lem:intcomp} implies that $\|f_k(.,\xi)\|_1\preceq1$ for all $\xi\in\dd X$ and $k\ge1$.

Take $\eta,\eta'\in\dd X$ and $U,U'\subset \dd X$ as in Lemma \ref{lem:ctrinf} and observe that:
\begin{align*}
&\BE[\tilde{\pi}_o(X_n){\bf 1}_{a'n\le|X_n|_o\le an}]{\bf 1}_{\dd X}(\xi)=\sum_{g\in \Ga,\,a'n\le|g|_o\le an}\mu^{*n}(g)\frac{e^{\e D_\e(g.o|\xi)_o}}{1+|g|_o}\\
&\le\frac{1}{1+a'n}\sum_{g\in \Ga,\,a'n\le|g|_o\le an}\mu^{*n}(g)f_n(g\eta,\xi)\\
&+\frac{1}{1+a'n}\sum_{g\in \Ga,\,a'n\le|g|_o\le an}\mu^{*n}(g)f_n(g\eta',\xi)\\
\end{align*}
\begin{align*}
&\le\frac{1}{(1+a'n)\nu(U)}\sum_{g\in \Ga}\mu^{*n}(g)\int_{\dd X}f_n(g\eta,\xi)d\eta\\
&+\frac{1}{(1+a'n)\nu(U')}\sum_{g\in \Ga}\mu^{*n}(g)\int_{\dd X}f_n(g\eta',\xi)d\eta'\\
&\le2\frac{\|f_n(.,\xi)\|_1}{(1+a'n)\min\{\nu(U),\nu(U')\}}\\
&=2\frac{1+an}{(1+a'n)\min\{\nu(U),\nu(U')\}}=2\frac{1+a}{a'\min\{\nu(U),\nu(U')\}}
\end{align*}
The proposition follows from the uniform bounds on each of these quantities.
\end{proof}

\begin{cor}\label{cor:corb}
Let $\vp,\psi\in L^2[\dd X,\nu]$ be two square integrable functions on $(\dd X,\nu)$.
Then one has:
$$|(\BE^\mu[\tilde{\pi}_o(X_n)]\vp|\psi)|\preceq \|\vp\|_2\|\psi\|_2$$
uniformly on $n$.
In particular the sequence of operator averages $(\mathcal{P}_n)_n$ is uniformly bounded.
\end{cor}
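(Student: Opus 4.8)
The plan is to deduce the $L^2$-operator bound from Proposition \ref{prop:corb} by a Schur-test argument applied directly to the series defining $\mathcal P_n$, taking advantage of the fact that all of its coefficients are nonnegative. Write $M=\sup_{n\ge0}\|\BE^\mu[\ti\pi_o(X_n)]\mathbf 1_{\dd X}\|_{L^\infty(\dd X,\nu)}$, which is finite by Proposition \ref{prop:corb}. Since the coefficients $\frac{\mu^{*n}(g)}{\Xi_o(g)}\sqrt{r_o(g^{-1},\cdot)}$ are nonnegative one has $|\mathcal P_n\vp|\le\mathcal P_n|\vp|$ pointwise, so it suffices to show $\|\mathcal P_n\vp\|_2\le M\|\vp\|_2$ for $\vp\ge0$; this also makes sense of $\mathcal P_n$ as a weak-operator limit of its finite partial sums (which are bounded operators, and form a monotone net on positive vectors), and then $|(\mathcal P_n\vp|\psi)|\le\|\mathcal P_n\vp\|_2\|\psi\|_2$ gives the stated bilinear estimate, uniformly in $n$.

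First I would apply the Cauchy--Schwarz inequality to the sum over $\Ga$. Writing $a_g(\xi)=\frac{\mu^{*n}(g)}{\Xi_o(g)}\sqrt{r_o(g^{-1},\xi)}\ge0$, so that $\mathcal P_n\vp(\xi)=\sum_g a_g(\xi)\vp(g^{-1}\xi)$ and $\sum_g a_g(\xi)=\BE^\mu[\ti\pi_o(X_n)]\mathbf 1_{\dd X}(\xi)\le M$, one obtains
$$\big(\mathcal P_n\vp(\xi)\big)^2\le\Big(\sum_g a_g(\xi)\Big)\Big(\sum_g a_g(\xi)\vp(g^{-1}\xi)^2\Big)\le M\sum_g a_g(\xi)\vp(g^{-1}\xi)^2 .$$
Integrating against $\nu$ and using Tonelli,
$$\|\mathcal P_n\vp\|_2^2\le M\sum_g\frac{\mu^{*n}(g)}{\Xi_o(g)}\int_{\dd X}\sqrt{r_o(g^{-1},\xi)}\,\vp(g^{-1}\xi)^2\,d\nu(\xi).$$

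The remaining step is to recognize the right-hand side as $M$ times a second copy of $\BE^\mu[\ti\pi_o(X_n)]\mathbf 1_{\dd X}$ tested against $\vp^2$. For this I would change variables $\eta=g^{-1}\xi$ in each integral: since $\frac{d(g^{-1})_*\nu}{d\nu}=r_o(g,\cdot)$ and the cocycle identity gives $r_o(g^{-1},g\eta)=r_o(g,\eta)^{-1}$, the integrand transforms by $\sqrt{r_o(g^{-1},g\eta)}\,r_o(g,\eta)=\sqrt{r_o(g,\eta)}$, so that $\int\sqrt{r_o(g^{-1},\xi)}\vp(g^{-1}\xi)^2d\nu(\xi)=\int\sqrt{r_o(g,\eta)}\vp(\eta)^2d\nu(\eta)$. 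Interchanging sum and integral once more and reindexing $g\mapsto g^{-1}$ — which leaves $\mu^{*n}(g)$ and $\Xi_o(g)$ unchanged, by symmetry of $\mu$ and by $\Xi_o(g)=\Xi_o(g^{-1})$ — the inner sum becomes $\sum_g\frac{\mu^{*n}(g)}{\Xi_o(g)}\sqrt{r_o(g^{-1},\eta)}=\BE^\mu[\ti\pi_o(X_n)]\mathbf 1_{\dd X}(\eta)\le M$ again. Hence $\|\mathcal P_n\vp\|_2^2\le M^2\|\vp\|_2^2$, and the corollary follows.

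I expect the only genuine bookkeeping difficulty to be tracking the Radon--Nikodym cocycle through the substitution $\xi\mapsto g\eta$, together with the two uses of the symmetry of $\mu$; everything in sight is nonnegative, so the interchanges of summation and integration are justified by Tonelli, and no uniformity in $n$ is lost because the sole quantitative input, Proposition \ref{prop:corb}, is already uniform in $n$.
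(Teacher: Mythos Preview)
Your argument is correct and is essentially the same as the paper's: both use Cauchy--Schwarz against the positive kernel $u_g(\xi)\mu^{*n}(g)$, the change of variables $\xi\mapsto g\eta$ together with the cocycle identity and the symmetry $\mu^{*n}(g)=\mu^{*n}(g^{-1})$, $\Xi_o(g)=\Xi_o(g^{-1})$, and then invoke the $L^\infty$ bound of Proposition~\ref{prop:corb}. The only cosmetic difference is that the paper applies Cauchy--Schwarz once to the bilinear form on the product space $\Ga\times\dd X$ (working with $\vp,\psi\in L^4$ and extending by density), whereas you phrase it as a Schur test bounding $\|\mathcal P_n\vp\|_2$ directly; the content is identical.
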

\begin{proof}
Take $\vp,\psi\in L^4(\dd X,\nu)$ and observe that:
\begin{align*}
|(\BE^\mu[\tilde{\pi}_o(X_n)]\vp|\psi)|^2&=|\int \vp(g^{-1}\xi)\psi(\xi)u_g(\xi)d\xi d\mu^{*n}(g)|^2\\
&\le|\int |\vp(g^{-1}\xi)|^2u_g(\xi)d\xi d\mu^{*n}(g)\int |\psi(\xi)|^2u_g(\xi)d\xi d\mu^{*n}(g)|^2\\
&=(\BE^\mu[\tilde{\pi}_o(X_n)]{\bf1}_{\dd X}||\psi|^2)(|\vp|^2|\BE^\mu[\tilde{\pi}_o(X_n)]{\bf1}_{\dd X})\\
&\le\|\BE^\mu[\tilde{\pi}_o(X_n)]{\bf1}_{\dd X}\|_\infty^2\|\vp\|_2^2\|\psi\|_2^2
\end{align*}
The corollary follows by density of $L^4(\dd X,\nu)$ inside of $L^2[\dd X,\nu]$.
\end{proof}
%%%%%%%%%%%%%%%%%%%%%%%%% JUST ADDED %%%%%%%%%%%%%%%%%%%%%%%%%%%%%%%
%%%%%%%%%%%%%%%%%%%%%%%%% JUST ADDED %%%%%%%%%%%%%%%%%%%%%%%%%%%%%%%

%\subsubsection{Irreducibility: Theorem \ref{thm:flipap} end of the proof}

%%%%%%%%%%%%% RECURRENCE PART.%%%%%%%%%%%%%%%%%%%%%%%%%%%%%
%\section{Recurrent sets and subgroups}
%%%%%%%%%%%%%%%%%SUBSECTION EQUIDISTRIBUTION REC%%%%%%%%%%%%%
%%%%%%%%%%%%%%%%%%%%%%%%%%%%%%%%%%%%%%%%%%%%%%%%%%%%%%%%%%%%%%%%%
%%%%%%%%%%%%%%%%%%%%%%%%%%%%%%%%%%%%%%%%%%%%%%%%%%%%%%%%%%%%%%%%%
%%%%%%%%%%%%%%%%%%%%%%%%%%%%%%%%%%%%%%%%%%%%%%%%%%%%%%%%%%%%%%%%%
\nocite{*}
\bibliographystyle{plain}
\bibliography{biblio_rec}
\end{document}